\newcommand\reallywidehat[1]{%
\savestack{\tmpbox}{\stretchto{%
  \scaleto{%
    \scalerel*[\widthof{\ensuremath{#1}}]{\kern-.6pt\bigwedge\kern-.6pt}%
    {\rule[-\textheight/2]{1ex}{\textheight}}
  }{\textheight}%
}{0.5ex}}%
\stackon[1pt]{#1}{\tmpbox}%
}
\theoremstyle{plain} 
\newtheorem{prop}{Proposition}[section]
\newtheorem{lemma}{Lemma}[section]
\newtheorem{thrm}{Theorem}[section]
\newtheorem{corol}{Corollary}[section]
\theoremstyle{definition} 
\newtheorem{mydef}{Definition}[section]
\theoremstyle{remark} 
\newtheorem{remark}{Remark}[section]
\title{Multi-scale Lipschitz percolation of increasing events for Poisson random walks}
\author{Peter Gracar\footnote{Department of Mathematical Sciences, University of Bath, UK. {\tt p.gracar@bath.ac.uk}}\; and Alexandre Stauffer\footnote{Department of Mathematical Sciences, University of Bath, UK, {\tt a.stauffer@bath.ac.uk}. Supported by a Marie Curie Career Integration Grant PCIG13-GA-2013-618588 DSRELIS, and an EPSRC Early Career Fellowship.}}
\date{}
\begin{document}

\maketitle

\begin{abstract}
	Consider the graph induced by \(\mathbb{Z}^d\), equipped with \emph{uniformly elliptic} random conductances. At time \(0\), place a Poisson point process of particles on \(\mathbb{Z}^d\) and let them perform independent simple random walks. Tessellate the graph into cubes indexed by \(i\in\mathbb{Z}^d\) and tessellate time into intervals indexed by \(\tau\).
	Given a local event $E(i,\tau)$ that depends only on the particles inside the space time region given by the cube $i$ and the time interval $\tau$, we prove the existence of a Lipschitz connected surface of \emph{cells} \((i,\tau)\) that separates the origin from infinity on which $E(i,\tau)$ holds. This gives a directly applicable and robust framework for proving results in this setting that need a multi-scale argument. For example, this allows us to prove that an infection spreads with positive speed among the particles.
		\\ \\
\textit{Keywords and phrases:} multi-scale percolation, Lipschitz surface, spread of infection
\end{abstract}

\section{Introduction}\label{section:intro}

Let \(G=(\mathbb{Z}^d,E)\) be the \(d\)-dimensional square lattice with edges between nearest neighbors: \((x,y)\in E\) iff \(\|x-y\|_1=1\).
Start with a collection of particles given by a Poisson point process on \(\mathbb{Z}^d\) of intensity \(\lambda\), and let the particles move over time as independent continuous time simple random walks on \(G\). We refer to this system of particles as \emph{Poisson random walks}. 

Assume that at time \(0\) there is an infected particle at the origin, and that all other particles are uninfected. As particles move, an uninfected particle gets infected as soon as it shares a site with an infected particle.
Kesten and Sidoravicius \cite{Kesten2005} showed that for all \(\lambda>0\) the infection spreads with positive speed; that is, for all large enough \(t\), at time \(t\) there is an infected particle at distance of order \(t\) from the origin. 
A main challenge in establishing this result is that, as the infection spreads, it finds empty regions (i.e., regions without particles) of arbitrarily large sizes. An empty region \(A\subset\mathbb{Z}^d\) not only delays the spread of the infection locally, but also causes a decrease in the density of particles in a neighborhood around \(A\) as time goes on. A key part of the analysis in \cite{Kesten2005} is to control how often empty regions arise and how big an impact (in space and time) they cause. An additional challenge is that long-range dependences do arise. For example, if at some time the ball \(B(x,r)\) of radius \(r\) centered at \(x\in \mathbb{Z}^d\) is empty, then \(B(x,r/2)\) is likely to remain empty for a time of order \(r^2\). Thus, the probability that the space-time region \(B(x,r/2)\times[0,r^2]\) is empty of particles is at least exponential in \(r^d\), which is only a stretched exponential with respect to the volume of the space-time region. In \cite{Kesten2005}, the effect of empty regions was controlled via an intricate \emph{multi-scale argument}.

The problem of spread of infection among Poisson random walks is just one example where long-range dependences give rise to serious mathematical challenges, and where multi-scale arguments have been applied to 
great success.
In fact, multi-scale arguments have proved to be very useful in the analysis of several models, including the solution of several important questions regarding Poisson random walks 
\cite{Kesten2005,Kesten2006,Peres2012,Stauffer2014}, 
activated random walks \cite{Sidoravicius2014}, 
random interlacements \cite{Sidoravicius2009,Sznitman2012}, 
multi-particle diffusion limited aggregation \cite{Sidoravicius2016} and 
more general dependent percolation \cite{Candellero2015,Teixeira2016}. 

However, the main problem in developing a multi-scale analysis is that the argument is quite involved and can become very technical. 
Also, in each of the examples above, the involved multi-scale argument had to be developed from scratch and be tailored to the specific question being analyzed. 
Our main goal in this paper is to develop a more robust and systematic framework that can be applied to solve questions in the model of Poisson random walks without the need of carrying out a whole multi-scale argument each time.
We do this by showing that given a local event which is translation invariant and whose probability of occurrence is large enough, 
we can find a special percolating structure in space-time where this event holds.

We now explain our idea in a high-level way, deferring precise statements and definitions to \Cref{section:statement}. 
We tesselate space into cubes, indexed by \(i\in\mathbb{Z}^d\), and tessellate time into intervals indexed by \(\tau\in\mathbb{Z}\). Thus \((i,\tau)\) denotes the space-time cell of the tessellation consisting of the 
cube \(i\) and the time interval \(\tau\). Given any increasing, translation invariant event \(E(i,\tau)\) that is local (i.e., measurable with respect to the particles that get within some fixed distance to the space-time 
cell \((i,\tau)\)), if the marginal distribution \(\mathbb{P}(E(i,\tau))\) is large enough, 
our main result gives the existence of a \emph{two-sided Lipschitz surface} of space-time cells where \(E(i,\tau)\) holds for all cells in the surface. 

Once we obtain such a Lipschitz surface, instead of having to carry out a whole multi-scale analysis from scratch to analyze some question involving Poisson random walks, 
one is left with the much easier task of just coming up with a suitable choice of \(E(i,\tau)\). 
For example, for the case of spread of infection mentioned above, a natural choice is to define \(E(i,\tau)\) as the event that an infected particle in the cube \(i\) 
infects several other particles which then move to all cubes neighboring \(i\) by the end of the time interval \(\tau\). 
Then, the existence of the Lipschitz surface and its Lipschitz property ensures that, once the infection enters the surface, it is guaranteed to propagate through the surface. 


We further illustrate the applicability of our Lipschitz surface technique in \cite{Gracar2016}, where we apply the Lipschitz surface to study the spread of infection in the random conductance model.

\section{Setting and precise statement of the results}\label{section:statement}

{\bf Poisson~ random~ walks.}
We~ consider~ the~ graph~ \((\mathbb{Z}^d,E)\)~ with~ conductances \(\{\mu_{x,y}\}_{(x,y)\in E}\), which are i.i.d.\ non-negative weights on the edges of \(G\). In this paper, edges will always be undirected, so \(\mu_{x,y}=\mu_{y,x}\) for all \((x,y)\in E\). We also assume that the conductances are \emph{uniformly elliptic}: that is,
\begin{align}
	\textrm{there exists deterministic \(C_M>0\), such that }\nonumber\\
	\mu_{x,y}\in[C_M^{-1},C_M]\textrm{ for all }(x,y)\in {E},~\mathbb{P}-a.s.\label{eq:mu_bounds_new}
\end{align}
We say \(x\sim y\) if \((x,y)\in E\) and define \(\mu_x=\sum_{y\sim x}\mu_{x,y}\).
At time \(0\), consider a Poisson point process of particles on \(\mathbb{Z}^d\), with intensity measure \(\lambda(x)=\lambda_0\mu_x\) for some constant \(\lambda_0>0\) and all \(x\in\mathbb{Z}^d\). That is, for each \(x\in\mathbb{Z}^d\), the number of particles at \(x\) at time \(0\) is an independent Poisson random variable of mean \(\lambda_0\mu_x\). Then, let the particles perform independent continuous-time simple random walks on the weighted graph; i.e., a particle at \(x\in\mathbb{Z}^d\) jumps to a neighbor \(y\sim x\) at rate \(\frac{\mu_{x,y}}{\mu_x}\). It follows from the thinning property of Poisson random variables that the system of particles is in stationarity; that is, at any time \(t\), the particles are distributed according to a Poisson point process with intensity measure \(\lambda\). We refer to this system of particles as \emph{Poisson random walks} on \((G,\mu)\) with intensity \(\lambda_0\).

{\bf Tessellation.}
We now tesselate the graph \(G=(\mathbb{Z}^d,E)\) into \(d\)-dimensional cubes of side length \(\ell>0\). We index the cubes of the tessellation by integer vectors \(i\in\mathbb{Z}^d\) such that the cube \(i=(i_1,i_2,\dots,i_d)\) corresponds to the region \(\left(\prod_{j=1}^d[i_j\ell,(i_j+1)\ell]\right)\cap\mathbb{Z}^d\). Tessellate time into subintervals of length \(\beta\). We index the subintervals by \(\tau\in\mathbb{Z}\), representing the time interval \([\tau\beta,(\tau+1)\beta]\). We refer to the pair \((i,\tau)\), representing \(\prod_{j=1}^d[i_j\ell,(i_j+1)\ell]\times[\tau\beta,(\tau+1)\beta]\), as a \emph{space-time cell} and define the \emph{region of a cell} as \(R_1(i,\tau)=\prod_{j=1}^d[i_j\ell,(i_j+1)\ell]\times[\tau\beta,(\tau+1)\beta]\).

We will need to consider larger space-time cells as well. Let \(\eta\geq 1\) be an integer. For each cube \(i=(i_1,\dots,i_d)\) and time interval \(\tau\), define the \emph{super cube} \(i\) as \(\prod_{j=1}^d[(i_j-\eta)\ell,(i_j+\eta+1)\ell]\) and the \emph{super interval} \(\tau\) as \([\tau\beta,(\tau+\eta)\beta]\). We define the \emph{super cell} \((i,\tau)\) as the Cartesian product of the super cube \(i\) and the super interval \(\tau\).

{\bf Definitions for events.}
We define a particle system on \(\mathbb{Z}^d\) as a countable family  of not necessarily unique elements of \(\mathbb{Z}^d\), indexed by some countable set \(I\), representing the locations of the particles belonging to the particle system.
Let \((\Pi_s)_{s\geq 0}\) be a sequence of particle systems on \(\mathbb{Z}^d\), with \(\Pi_s\) representing the locations of the particles at time \(s\).
We say a particle system \(\Pi_s\) is distributed according to a Poisson random measure of intensity \(\zeta\), if for every \(A\subset\mathbb{Z}^d\), \(N(A)\) is a Poisson random variable with intensity \(\zeta(A)\), where \(N(A)\) is the number of particles belonging to \(\Pi_s\) that lie in \(A\).
We say an event \(E\) is \emph{increasing} for \((\Pi_s)_{s\geq 0}\) if the fact that \(E\) holds for \((\Pi_s)_{s\geq 0}\) implies that it holds for all \((\Pi'_s)_{s\geq 0}\) for which \(\Pi_s'\supseteq \Pi_s\) for all \(s\geq 0\).
We need the following definitions.

\begin{mydef}\label{def:restricted}
	We say an event \(E\) is \emph{restricted} to a region \(X\subset\mathbb{Z}^d\) and a time interval \([t_0,t_1]\) if it is measurable with respect to the \(\sigma\)-field generated by all the particles that are inside \(X\) at time \(t_0\) and their positions from time \(t_0\) to \(t_1\).
\end{mydef}
\begin{mydef}\label{def:displacement}
	We say a particle has displacement inside \(X'\) during a time interval \([t_0,t_0+t_1]\), if the location of the particle at all times during \([t_0,t_0+t_1]\) is inside \(x+X'\), where \(x\) is the location of the particle at time \(t_0\).
\end{mydef}
 
For an increasing event \(E\) that is restricted to a region \(X\) and time interval \([0,t]\), we have the following definition.

\begin{mydef}\label{def:probassoc}
	\(\nu_E\) is called the \emph{probability associated} to an increasing event \(E\) that is restricted to \(X\)  and a time interval \([0, t]\) if, for an intensity measure \(\zeta\) and a region \(X'\in\mathbb{Z}^d\), \(\nu_E(\zeta,X,X',t)\) is the probability that \(E\) happens given that, at time \(0\), 
	the particles in \(X\) are a particle system distributed according to the Poisson random measure of intensity \(\zeta\)
	and their motions from \(0\) to \(t\) are independent continuous time random walks on the weighted graph \((G,\mu)\), where the particles are conditioned to have displacement inside \(X'\) during \([0,t]\).
\end{mydef}

For each \((i,\tau)\in\mathbb{Z}^{d+1}\), let \(E_{\mathrm{st}}(i,\tau)\) be an increasing event restricted to the super cube \(i\) and the super interval \(\tau\). We will assume that \(E_{\textrm{st}}(i,\tau)\) is invariant under space-time translations. We say that a cell \((i,\tau)\) is \emph{good} if \(E_{\mathrm{st}}(i,\tau)\) holds and \emph{bad} otherwise.

{\bf The base-height index.}
We will need a different way to index space-time cells, which we refer to as the \emph{base-height index}. In the base-height index, we pick one of the \(d\) spatial dimensions and denote it as \emph{height}, using index \(h\in\mathbb{Z}\), while the other \(d\) space-time dimensions form the \emph{base}, which will be indexed by \(b\in\mathbb{Z}^d\). Then, a \emph{base-height cell} will be indexed by \((b,h)\in\mathbb{Z}^{d+1}\).
We will use the base-height index in order to define the two-sided Lipschitz surface so that it, as the name implies, satisfies the Lipschitz property. More precisely, we will define the two-sided Lipschitz surface to be a collection of space-time cells such that when considering the height of each cell as a mapping of its base, this mapping is Lipschitz continuous.

Analogously to space-time, we define the \emph{base-height super cell} \((b,h)\) to be the space-time super cell \((i,\tau)\), for which the base-height cell \((b,h)\) corresponds to the space-time cell \((i,\tau)\). Similarly, we define \(E_{\mathrm{bh}}(b,h)\), the increasing event restricted to the super cell \((b,h)\), to be the same as the event \(E_{\mathrm{st}}(i,\tau)\) for the space-time cell \((i,\tau)\) that corresponds to the base-height cell \((b,h)\).

{\bf Two-sided Lipschitz surface.}
Let a function \(F:\mathbb{Z}^d\rightarrow \mathbb{Z}\) be called a \emph{Lipschitz function}
	 if \(|F(x)-F(y)|\leq 1\) whenever \(\|x-y\|_1 = 1\).

\begin{mydef}\label{def:lip_surf}
	A \emph{two-sided Lipschitz surface} \(F\) is a set of base-height cells \((b,h)\in\mathbb{Z}^{d+1}\) such that for all \(b\in\mathbb{Z}^d\) there are exactly two (possibly equal) integer values \(F_+(b)\geq 0\) and \(F_-(b)\leq0\) for which \((b,F_+(b)),(b,F_-(b))\in F\) and, moreover, \(F_+\) and \(F_-\) are Lipschitz functions.
\end{mydef}

\begin{figure}[!h]
  \begin{center}
  	\includegraphics[width=0.8\linewidth]{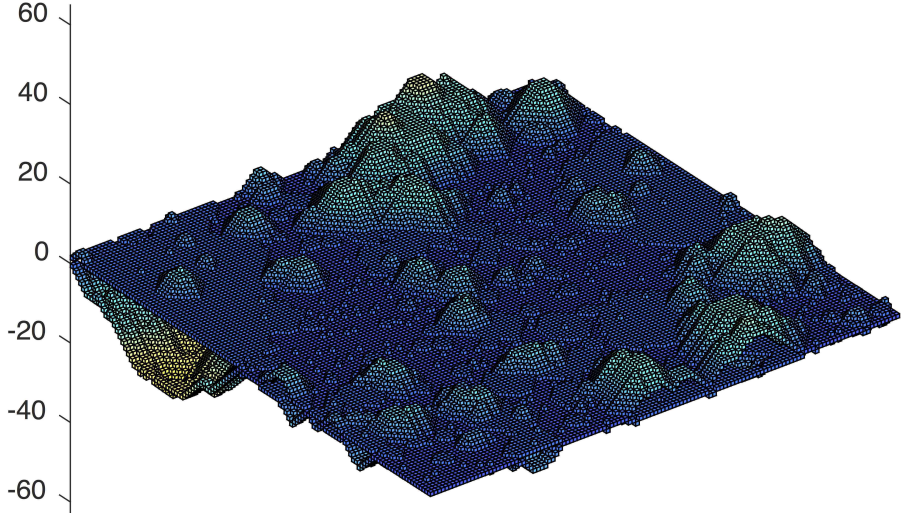}
  \end{center}
  \caption{A two-sided Lipschitz surface for the case of \(\mathbb{Z}^3\).}\label{fig:surface}
\end{figure}

An illustration of \(F\) for \(d=2\) is given in \Cref{fig:surface}. 
We say a space-time cell \((i,\tau)\) belongs to \(F\) if the corresponding base-height cell \((b,h)\) belongs to \(F\). We say a two-sided Lipschitz surface \(F\) \emph{exists}, if for all $b\in\mathbb{Z}^d$, we have $F_+(b)<\infty$ and $F_-(b)>-\infty$. For any positive integer $D$, we say a two-sided Lipschitz surface \emph{surrounds} a cell \((b',h')\) at distance \(D\) if any path \((b',h')=(b_0,h_0),(b_1,h_1),\dots,(b_n,h_n)\) for which \(\|(b_i,h_i)-(b_{i-1},h_{i-1})\|_1=1\) for all \(i\in\{1,\dots n\}\) and \(\|(b_n,h_n)-(b_0,h_0)\|_1>D\), intersects with \(F\).

{\bf Results.}
For any \(z\in\mathbb{Z}_+\), let \(Q_z=[-z/2,z/2]^d\). The following theorem establishes the existence of the Lipschitz surface.

\begin{thrm}\label{thrm:surface_event_simple}
	Let \((G,\mu)\) be a uniformly elliptic conductance graph on the lattice \(\mathbb{Z}^d\) for \(d\geq 2\). There exist positive constants \(c_0\), \(c_1\) and \(c_2\) such that the following holds. Tessellate \(G\) in space-time cells and super cells as described above for some \(\ell,\beta,\eta>0\) such that the ratio \(\beta/\ell^2<c_0\).
	Let \(E_{\mathrm{st}}(i,\tau)\) be an increasing event, restricted to the space-time super cell \((i,\tau)\).
	Fix \(\epsilon\in(0,1)\) and fix \(w\) such that
	\[
		w\geq\sqrt{\frac{\eta\beta}{c_2\ell^2}\log\left(\frac{8c_1}{\epsilon}\right)}.
	\]
	Then, there exists a positive number \(\alpha_0\) that depends on \(\epsilon\), \(\eta\), \(w\) and the ratio \(\beta/\ell^2\) so that if
	\begin{equation}\label{eq:main_theorem}
	\min\left\{C_{M}^{-1}\epsilon^2\lambda_0\ell^d,\log\left(\frac{1}{1-\nu_{E_{\mathrm{st}}}((1-\epsilon)\lambda,Q_{(2\eta+1)\ell},Q_{w\ell},\beta)}\right)\right\}\geq\alpha_0,
	\end{equation}\label{eq:minalpha}
	a two-sided Lipschitz surface \(F\) where \(E_{\mathrm{st}}(i,\tau)\) holds for all \((i,\tau)\in F\) almost surely exists.
	\end{thrm}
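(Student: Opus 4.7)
The plan is to first establish a single-cell bound $\mathbb{P}(E_{\mathrm{st}}(i,\tau)^c)\le e^{-\alpha_0}$ using the second term of \eqref{eq:main_theorem}, and then to build the two-sided Lipschitz surface by a Peierls-type argument, preceded by a multi-scale decoupling to control the long-range correlations between bad cells. For the single-cell bound, consider the particles present in the super cube at time $\tau\beta$; by stationarity they form a Poisson process of intensity $\lambda$. Mark each such particle independently as \emph{selected} with probability $1-\epsilon/2$, and further keep only those selected particles whose displacement during the super interval stays in $Q_{w\ell}$ relative to the starting site. Sub-Gaussian concentration for continuous-time random walks on uniformly elliptic graphs, together with the hypothesized lower bound on $w$, gives that each particle displaces outside $Q_{w\ell}$ in time $\eta\beta$ with probability at most $c_1 e^{-c_2 w^2\ell^2/(\eta\beta)}\le\epsilon/8$. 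By the marking theorem for Poisson processes, the doubly selected subset is itself a Poisson process of intensity at least $(1-\epsilon)\lambda$, whose particles evolve as independent random walks conditioned on having displacement in $Q_{w\ell}$; this is exactly the setting of $\nu_{E_{\mathrm{st}}}$. Since $E_{\mathrm{st}}$ is increasing and the full system dominates this subset, we obtain $\mathbb{P}(E_{\mathrm{st}}(i,\tau)^c)\le 1-\nu_{E_{\mathrm{st}}}((1-\epsilon)\lambda,Q_{(2\eta+1)\ell},Q_{w\ell},\beta)\le e^{-\alpha_0}$.

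The single-cell bound alone does not yield the surface, because bad cells are strongly correlated: a region of side $L$ that is unusually depleted of particles needs time of order $L^2$ to repopulate, so one bad space-time region typically triggers a whole column of bad cells in the time direction. The hypothesis $\beta/\ell^2<c_0$ is precisely the ratio that makes the walks diffusive on the scale of a single cube during one time step. To decouple across scales I would set up a geometric hierarchy $L_k=2^k\ell$, $T_k=L_k^2/c_0$, and a matching sequence of \emph{scale-$k$ bad events} asserting the existence of a subcube of side $L_k$ in which, over a time window of length $\eta T_k$, either the Poisson density drops below $(1-\epsilon)\lambda$ or some particle displaces further than $wL_k$. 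The key recursive claim is that a scale-$(k{+}1)$ failure forces several well-separated scale-$k$ failures, whose joint probability can be bounded by multiplying per-scale probabilities up to approximate-independence corrections governed by the random-walk displacement at scale $L_k$. The induction base uses the first term of \eqref{eq:main_theorem}: Chernoff's bound for a Poisson random variable of mean at least $C_M^{-1}\lambda_0\ell^d$ gives density failure at the smallest scale with probability at most $e^{-c\epsilon^2 C_M^{-1}\lambda_0\ell^d}\le e^{-c\alpha_0}$, which together with the single-cell displacement bound controls the scale-$0$ failures. Iterating yields super-exponential decay in scale, enough to stochastically dominate the bad-cell field by a highly sub-critical Bernoulli field after a suitable coarse-graining.

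With this in hand, the two-sided Lipschitz surface is constructed by a Peierls-type argument in the spirit of existing Lipschitz-percolation results for Bernoulli fields: the maps $F_\pm$ fail to exist (or $F$ fails to surround the origin at any finite distance) only if the bad cells contain a finite connected obstacle of some size $n$ through which no Lipschitz height function can pass; such obstacles have combinatorial complexity at most $C^n$ while their probability, by the multi-scale estimate, is super-exponentially small in $n$. Borel--Cantelli then gives almost-sure existence of $F$, with $\alpha_0$ chosen large enough to absorb all entropy factors. The principal technical obstacle is the recursive decoupling step: at each scale one must peel off the long-range temporal correlations arising from persistent density fluctuations without destroying the Poisson structure on which the base-case Chernoff estimate relies, and in particular one must separate the two sources of badness (density shortage versus atypical particle displacement) at every level of the hierarchy.
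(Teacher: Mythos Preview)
Your single-cell estimate is essentially right and matches what the paper does at scale $1$ (its Lemma~\ref{lemma:failure_prob} for $k=1$): thin by $1-\epsilon/2$, then keep particles whose displacement stays in $Q_{w\ell}$, and invoke the increasing property. That part is fine.

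The multi-scale part, however, has two genuine gaps.

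\textbf{First, the recursion is set up in the wrong direction and misses the key mechanism.} You propose that a scale-$(k{+}1)$ failure forces several well-separated scale-$k$ failures. That is not how the argument runs, and it is not obviously true for this model: a large region being depleted does not automatically produce \emph{several well-separated} smaller depletions in any useful sense. What the paper actually proves is the opposite conditional statement: \emph{given} the parent cell at scale $k{+}1$ is good (i.e.\ sufficiently dense at the start of its time interval), the child cell at scale $k$ is good with probability at least $1-\exp\{-c\psi_k\}$. The engine for this step is a nontrivial \emph{local mixing} coupling (Theorem~\ref{thrm:mixing}): if particles are dense in a large cube at time $0$, then after running them for time of order $\ell_k^2/\epsilon^{4/\Theta}$ their positions inside a slightly smaller cube stochastically dominate an \emph{independent} Poisson process of intensity $(1-\epsilon_k)\lambda$. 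This is what ``peels off the long-range temporal correlations without destroying the Poisson structure''---the step you correctly flag as the main obstacle but do not address. Without this coupling, there is no way to re-enter a Poisson estimate at each scale, and your Chernoff base case cannot be iterated.

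\textbf{Second, the endpoint you aim for---stochastic domination of the bad-cell field by a sub-critical Bernoulli field---is stronger than what is achieved and likely not true as stated.} The bad-cell field here is genuinely long-range dependent (a single depletion of side $L$ creates a correlated bad column of height $\sim L^2/\beta$), so a Liggett--Schonmann--Stacey comparison after coarse-graining is not available. The paper never obtains such domination. Instead it: (i) constructs the surface explicitly via the hill/mountain machinery of Section~\ref{section:paths}, reducing existence to the summability criterion $\sum_r r^d\,\mathbb{P}[\operatorname{rad}_0(H_0)>r]<\infty$ (Theorem~\ref{thrm:surface}); (ii) shows that a long $d$-path of bad scale-$1$ cells implies a long \emph{support connected $D$-path} of multi-scale bad cells (Lemma~\ref{lemma:support_connected_paths}), where ``multi-scale bad'' means $A_k=0$ in the fractal-percolation hierarchy; and (iii) bounds the probability of any fixed such path by $\exp\{-c\sum_j\psi_{k_j}\}$ via the conditional bound of Lemma~\ref{lemma:failure_prob}, then takes a union bound with a careful path-counting argument (Lemmas~\ref{lemma:number_of_paths} and~\ref{lemma:weight_of_path}). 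The scales grow super-geometrically, $\ell_k=m^{k-1}(k!)^a\ell$, precisely so that the weight $\psi_k$ beats the entropy of paths at scale $k$; a purely geometric hierarchy $L_k=2^k\ell$ would not obviously suffice for step~(iii). Your Peierls sketch skips all of this structure.
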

	
We now briefly explain the main conditions for the establishment of the above theorem. We usually fix $\beta/\ell^2$ to be an arbitrary, but small constant. The value of $\eta$ defines the super cubes, which just model how much 
overlap we need between the cells of the tessellation (usually to allow information to propagate from one cell to its neighbors). Once these two parameters are fixed, we need to satisfy (\ref{eq:main_theorem}).
First we need $C_{M}^{-1}\epsilon^2\lambda_0\ell^d\geq \alpha_0$. After fixing $\epsilon$, this can be satisfied either by setting $\ell$ large enough (which makes the cells of the tessellation large), or by 
assuming that the density of particles $\lambda_0$ is large enough. 
Then we still need to make $\nu_{E_{\mathrm{st}}}((1-\epsilon)\lambda,Q_{(2\eta+1)\ell},Q_{w\ell},\beta)\geq 1-\exp(-\alpha_0)$. 
Usually \(E_{\textrm{st}}\) is a local event that becomes more and more likely by setting \(\ell\) larger and larger; so 
having \(\ell\) large enough suffices to satisfy this condition as well. 
The value of $\epsilon>0$ is introduced so that in $\nu_{E_{\mathrm{st}}}$ we can consider a Poisson
point process of particles of intensity measure $(1-\epsilon)\lambda$, slightly smaller than the actual intensity of particles. 
This slack is needed to restrict our attention to the particles that ``behave well''. 
Then the lower bound on \(w\) is to guarantee that, as particles move in \(Q_{(2\eta+1)\ell}\) for time \(\beta\), with high probability they do not leave \(Q_{(2\eta+1)\ell+w\ell}\), 
allowing a better control of dependences between neighboring cells of the tessellation.
The proof of \Cref{thrm:surface_event_simple} is given in \Cref{section:proof_of_surface}. With some additional work, which we do in \Cref{section:dobule_diagonal}, we can establish the following property of \(F\).

\begin{thrm}\label{thrm:shell_tail}
	Assume the conditions of \Cref{thrm:surface_event_simple} are satisfied. There exist positive constants \(c\) and \(C\) such that, for any sufficiently large \(r>0\), we have
	\begin{align*}
		\mathbb{P}\left[\begin{array}{c}F\textrm{ does not surround}\\\textrm{ the origin at distance }r\end{array}\right]&
		\leq\left\{
		\begin{array}{ll}
			\sum_{s\geq r}s^d\exp\{-C\lambda_0\frac{\ell s}{(\log\ell s)^c}\},&\textrm{for }d=2\\
			\sum_{s\geq r}s^d\exp\{-C\lambda_0\ell s\},&\textrm{for }d\geq 3.
		\end{array}\right.&
	\end{align*}
\end{thrm}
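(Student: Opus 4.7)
The strategy is to revisit the multi-scale construction developed for \Cref{thrm:surface_event_simple} and extract from it a quantitative tail estimate on how far from the origin the Lipschitz surface $F$ can lie. That earlier proof should produce, at each scale $k \geq 0$, a notion of a \emph{$k$-bad cell} whose probability decays fast in the scale length $L_k$, with $F$ taken to be (a portion of) the boundary of the origin-containing component of cells that are not $k$-bad at any sufficiently high scale. I plan to use the same hierarchy but now track how a failure to surround the origin at distance $r$ forces a large bad cluster to exist at a specific scale $k^\star(r)$.

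The first step is combinatorial. If $F$ fails to surround the origin at distance $r$, then there is a nearest-neighbour path $(b_0,h_0)=(0,0),(b_1,h_1),\dots,(b_n,h_n)$ with $\|(b_n,h_n)\|_1>r$ that avoids $F$. The two-sided Lipschitz structure of $F$, together with $F_+\geq 0$ and $F_-\leq 0$, forces such a path to witness either $F_+$ being large or $F_-$ being very negative on a whole neighbourhood of bases near the projection of the path, which in turn forces a connected set of bad cells of diameter at least $\Theta(r)$ to separate the origin from the boundary of the ball of radius $r$ in the base-height cell graph. I would formulate the event in question as a cut/separator statement and handle $F_+$ and $F_-$ symmetrically with a factor of two.

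Once this reduction is in place, for each $s \geq r$ I would pick the scale $k^\star(s)$ at which $L_{k^\star}\approx \ell s$ and apply a union bound: the number of scale-$k^\star$ cells within distance $O(s)$ of the origin is $O(s^d)$, and each is $k^\star$-bad with probability bounded by the recursive estimate from the proof of \Cref{thrm:surface_event_simple}. For $d\geq 3$ the recursion delivers a bound of the form $\exp(-c\lambda_0 \ell s)$ at the relevant scale, producing the second branch of the bound directly. For $d=2$ the additional $(\log \ell s)^c$ factor in the exponent arises because the density-recovery estimates at the base scale, which rest on the behaviour of the random walks used in \Cref{def:probassoc}, lose a polylogarithmic factor due to the recurrence of two-dimensional random walks; this then propagates through the multi-scale recursion and weakens the final exponent by exactly such a factor.

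The main obstacle will be the combinatorial reduction. Classical Peierls arguments control one-sided surfaces or interfaces, whereas here one must simultaneously rule out that either $F_+$ climbs high or $F_-$ dives deep over any base near the origin, all while respecting the Lipschitz constraint globally and the fact that a non-surrounding path can mix horizontal and vertical steps arbitrarily. I anticipate this is the ``double'' content of \Cref{section:dobule_diagonal}: one has to run two cone/diagonal arguments in parallel to identify the bad blocking structure on each side of $F$, and only then invoke the quantitative scale-$k$ bad-cell estimates from the multi-scale construction to sum over $s\geq r$ and over cell positions and obtain the two regime-dependent bounds in the statement.
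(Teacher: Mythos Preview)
Your proposal has the right overall shape but two of its load-bearing steps are not correct, and a third is a misdiagnosis.

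\textbf{The combinatorial reduction.} You assert that a path escaping to distance $r$ while avoiding $F$ ``forces a connected set of bad cells of diameter at least $\Theta(r)$''. This is not what happens. What you actually get (\Cref{lemma:sequence_of_hills}) is a chain of \emph{hills} $H_{u_1},\dots,H_{u_k}$, each rooted at a different closed site $u_i\in\mathbb{L}$, that are pairwise adjacent or intersecting. These hills are built from $d$-paths, which contain diagonal moves that do \emph{not} go through closed sites; so the union of bad cells underlying these hills need not be connected in the usual cluster sense, and there is no single $D$-path linking them. The paper's fix is exactly the ``double'' device you anticipate but do not quite name: one introduces $DD$-paths (\Cref{def:DDpath}), which allow concatenating $d$-paths both forward and in reverse and joining two of them through a shared diagonal tip (\Cref{lemma:hills-to-DDpath}). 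Only after this reduction can the multi-scale estimate be invoked, and the whole of \Cref{section:support_connected_paths} has to be redone for $DD$-paths (Lemmas~\ref{lemma:support_connected_DDpaths}--\ref{lemma:weight_of_path_DD}, \Cref{prop:exp_tailDD}).

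\textbf{The single-scale union bound.} Your plan to ``pick the scale $k^\star(s)$ at which $L_{k^\star}\approx \ell s$'' and union-bound over $O(s^d)$ cells at that scale is not how the argument works and would not close. The event that $F$ fails to surround at distance $r$ does not imply that any particular cell at a fixed scale $k^\star$ is bad; the obstruction can be spread over cells of many different scales. The paper instead shows that the $DD$-path obtained above yields a \emph{support connected $DD$-path} of mutually well-separated cells of varying scales, bounds its weight from below (\Cref{lemma:weight_of_path_DD}), bounds the number of such paths from above (\Cref{lemma:number_of_DDpaths}), and multiplies by the probability estimate of \Cref{lemma:failure_prob_total}. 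No single-scale Peierls argument is available because the scale-1 events are highly dependent.

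\textbf{The $d=2$ polylog.} This factor does not come from recurrence of two-dimensional random walks. It arises in \Cref{lemma:weight_of_path} (and its $DD$-analogue): for $d=2$ one has $\psi_k\asymp \ell_{k-1}^2$ while the diameter of the extended support is $\asymp \ell_k^2$, so the ratio picks up a factor $(k+1)^{4a+4}$, and since $k\le \kappa=O(\log t)$ this becomes $(\log t)^c$ in the exponent. For $d\ge 3$ the extra powers of $\ell_{k-1}$ in $\psi_k$ absorb this and no log appears.
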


The way \Cref{thrm:shell_tail} is proved also gives that the parts of the two-sided Lipschitz surface where the two sides \(F_+\) and \(F_-\) intersect not only almost surely separate the origin from infinity within the ``zero-height hyperplane'' \(\mathbb{L}=\mathbb{Z}^{d}\times\{0\}\), but they even percolate within \(\mathbb{L}\). We say that the two-sided Lipschitz surface percolates within \(\mathbb{L}\) if the set \(\mathbb{L}\setminus F\) contains only finite connected components.

\begin{thrm}\label{corol:percolate_simple}
	Assume the conditions of \Cref{thrm:surface_event_simple} are satisfied.
	If in addition we have that \(\ell\) is sufficiently large and \(\mathbb{P}[E_{\mathrm{st}}(0,0)]\) is sufficiently large, then the zero-height cluster \(F\cap\mathbb{L}\) of the two-sided Lipschitz surface \(F\) percolates within \(\mathbb{L}\) almost surely.
\end{thrm}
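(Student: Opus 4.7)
My plan is to deduce \Cref{corol:percolate_simple} directly from \Cref{thrm:shell_tail} together with the translation invariance of the setup, without revisiting the multi-scale construction. The additional hypotheses that $\ell$ and $\mathbb{P}[E_{\mathrm{st}}(0,0)]$ be sufficiently large are used only to guarantee that the parameter $\alpha_0$ appearing in \Cref{thrm:surface_event_simple} is large enough to make \Cref{thrm:shell_tail} applicable with a useful tail bound, so that no further multi-scale estimate is required.

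I would first observe that by \Cref{def:lip_surf}, a base-height cell $(b,0)$ belongs to $F$ if and only if $F_+(b) = 0$ or $F_-(b) = 0$. Since the edge set of $\mathbb{L}$ is inherited from the nearest-neighbour structure of $\mathbb{Z}^{d+1}$, any $\mathbb{L}$-path from $b_0$ avoiding $F \cap \mathbb{L}$, regarded as a $\mathbb{Z}^{d+1}$-path from $(b_0,0)$, also avoids $F$. Consequently, if $F$ surrounds $(b_0,0)$ in $\mathbb{Z}^{d+1}$ at some finite distance $r$, then the connected component of $b_0$ in $\mathbb{L} \setminus F$ is contained in the $\mathbb{L}$-ball of radius $r$ around $b_0$ and is in particular finite.

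Next I would apply \Cref{thrm:shell_tail} at every base point. By the translation invariance of the stationary Poisson random walks on the conductance graph together with the space-time translation invariance of $E_{\mathrm{st}}$, the law of $F$ is translation invariant in the base direction, and therefore
\[
    \mathbb{P}\bigl(F \text{ does not surround } (b_0, 0) \text{ at distance } r\bigr) \leq \Phi(r)
\]
for every $b_0 \in \mathbb{Z}^d$ and every sufficiently large $r$, where $\Phi(r)$ denotes the right-hand side of the bound in \Cref{thrm:shell_tail}. In both $d = 2$ and $d \geq 3$ the stretched-exponential decay of the summand in $s$ yields $\Phi(r) \to 0$ as $r \to \infty$; since the event above is monotone decreasing in $r$, letting $r \to \infty$ shows that almost surely $F$ surrounds $(b_0,0)$ at some finite distance. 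A countable union over $b_0 \in \mathbb{Z}^d$ then gives, almost surely, that every connected component of $\mathbb{L} \setminus F$ is finite, which is exactly the definition of $F$ percolating within $\mathbb{L}$.

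The only step that genuinely requires attention is the translation-invariance claim; once it is in hand, the proof reduces to combining \Cref{thrm:shell_tail} with the elementary observation that an $\mathbb{L}$-path is also a $\mathbb{Z}^{d+1}$-path. As noted in the paragraph following \Cref{thrm:shell_tail}, the same proof actually gives the stronger conclusion that the smaller set $\{b \in \mathbb{Z}^d : F_+(b) = F_-(b) = 0\} \subseteq F \cap \mathbb{L}$ already percolates within $\mathbb{L}$, but this refinement is not needed for \Cref{corol:percolate_simple}.
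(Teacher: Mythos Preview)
Your argument is correct and is in fact more direct than the paper's own proof. The paper (in \Cref{corol:percolate}) argues by contradiction in a two-dimensional slice: assuming the zero-height cluster at the origin is finite, it produces a dual circuit of cells with $F_+>0$, converts this via \Cref{lemma:sequence_of_hills} and \Cref{lemma:hills-to-DDpath} into a long $DD$-path of bad cells, and then invokes the tail bound of \Cref{prop:exp_tailDD} to show this has small probability; this yields only positive probability of percolation and implicitly relies on a $0$--$1$ argument to upgrade to almost sure. You bypass the circuit/$DD$-path step entirely by observing that the event ``$F$ surrounds $(b_0,0)$ at some finite distance'' already forces the $\mathbb{L}$-component of $(b_0,0)$ in $\mathbb{L}\setminus F$ to be finite, and then applying \Cref{thrm:shell_tail} at every base point. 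This gives the almost-sure conclusion directly and, as you implicitly notice, does not actually use the extra hypotheses on $\ell$ and $\mathbb{P}[E_{\mathrm{st}}(0,0)]$ beyond what is already required for \Cref{thrm:shell_tail}.

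Two small points of precision. First, your translation-invariance justification is slightly loose: the conductances are fixed, so spatial translation invariance of the law need not hold literally; the correct statement is that the multi-scale estimate behind \Cref{thrm:shell_tail} depends on the conductances only through the uniform ellipticity constant $C_M$, so the same bound $\Phi(r)$ applies with $(b_0,0)$ in place of the origin. Second, your claim that the extra hypotheses are ``used only to guarantee that $\alpha_0$ is large enough'' is not quite how they enter in the paper's own proof (there they are used to make the circuit probability strictly less than $1$); in your argument they are simply redundant.
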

\vspace{0.11cm}
\begin{remark}\label{rem:height1}
In the definition of the base-height index, we fixed height to correspond to one of the spatial dimensions. 
This is the natural setting for the application of this Lipschitz surface technique to all problems we have in mind, for example the ones in \cite{Gracar2016}. 
However, in the definition of the surface we could have let height correspond to the \emph{time} dimension. 
Then, Theorems \ref{thrm:surface_event_simple}, \ref{thrm:shell_tail} and \ref{corol:percolate_simple} hold for \(d\geq 3\), but they no longer hold for \(d=2\). 
See \Cref{rem:height2} in \Cref{section:support_connected_paths} for details. 
\end{remark}

The remainder of this paper is structured as follows. In \Cref{section:paths} we give a construction of the two-sided Lipschitz surface for site percolation. \Cref{section:multiscale} introduces multiple scales of the tessellation and \Cref{section:support_connected_paths} generalizes the paths defined in the construction from \Cref{section:paths} to this multi-scale framework. \Cref{section:proof_of_prop} ties together the results from the previous sections, which is then applied in \Cref{section:proof_of_surface} to prove \Cref{thrm:surface_event_simple}. \Cref{section:dobule_diagonal} extends the results to a larger class of paths, which let us control areas where the two sides of the Lipschitz surface have non-zero height, in order to prove Theorems \ref{thrm:shell_tail} and \ref{corol:percolate_simple}.

\section{Two-sided Lipschitz surface in percolation}\label{section:paths}
In this section we show how to construct the Lipschitz surface \(F\) given a realization of the events \(E_{\textrm{bh}}(b,h)\), \((b,h)\in\mathbb{Z}^{d+1}\), from \Cref{section:statement}. For this, we regard \((E_{\textrm{bh}}(b,h))_{(b,h)\in\mathbb{Z}^{d+1}}\) as a site percolation process on \(\mathbb{Z}^{d+1}\) so that a site \((b,h)\in\mathbb{Z}^{d+1}\) is considered to be \emph{open} iff \(E_{\textrm{bh}}(b,h)\) holds and \emph{closed} otherwise. 
We assume that the \(E_{\textrm{bh}}(b,h)\) are translation invariant. The concept of Lipschitz percolation for independent Bernoulli percolation was introduced and studied in \cite{Dirr2010,Grimmett2012}. We modify their approach as we need several additional properties from the surface, such as the surface being two sided (i.e., composed of two sheets), the surface being close enough to the zero-height hyperplane \(\mathbb{L}=\mathbb{Z}^{d}\times\{0\}\), and the two sides of the surface intersecting in several points in~\(\mathbb{L}\).

The construction of \(F\) is based on the definition of a special type of paths, which we call \(d\)-paths. The definition of \(d\)-paths is based on a few rules. The first is that \(d\)-paths only start from \emph{closed} sites at height \(0\) (i.e., closed sites of \(\mathbb{L}\)).
For \(x\in\mathbb{Z}\), define the set \(\operatorname{Sign}(x)\) as \(\{+1\}\) if \(x>0\), \(\{-1\}\) if \(x<0\), and \(\{-1,+1\}\) if \(x=0\). 
A \(d\)-path from a closed site \(u\in\mathbb{L}\) to a not necessarily closed site \(v\in\mathbb{Z}^{d+1}\) is any finite sequence of distinct sites \(u=(b_0,0),(b_1,h_1),\dots,(b_k,h_k)=v\) of \(\mathbb{Z}^{d+1}\) such that for each \(i=1,2,\dots,k\) we have that either (\ref{for:heightdiff}) or (\ref{for:diagdiff}) below hold:

\begin{equation}\label{for:heightdiff}
	b_i=b_{i-1},\;h_i-h_{i-1}\in\operatorname{Sign}(h_{i-1})\textrm{ and }(b_i,h_i)\textrm{ is a closed site},
\end{equation}
or
\begin{equation}\label{for:diagdiff}
	\|b_i-b_{i-1}\|_1=1,\;h_{i-1}-h_{i}\in\operatorname{Sign}(h_{i-1})\textrm{ and }h_{i-1}\neq 0.
\end{equation}

We say the \(i\)-th move of a \(d\)-path is \emph{vertical} if it is like (\ref{for:heightdiff}), 
otherwise we say the \(i\)-th move is \emph{diagonal}. Note that in a vertical move, the path moves away from \(\mathbb{L}\), while in a diagonal move it moves towards \(\mathbb{L}\). Moreover, unlike a vertical move, a diagonal move is not required to go into a closed site and cannot be performed from a site of \(\mathbb{L}\).

In order to avoid issues of parity, we define for \((b,h)\in\mathbb{Z}^{d+1}\) the set of all sites that have the same base as \((b,h)\), but are further away from \(\mathbb{L}\).
\[
	\widehat{(b,h)}  :=\left\{(b,h')\in \mathbb{Z}^{d+1}:\;\tfrac{h'}{h}\geq 1\right\}.
\]

For \(u\in\mathbb{L}\) and \(v\in\mathbb{Z}^{d+1}\), we denote by \(u\rightarrowtail_{d}v\) the event that there is a \(d\)-path from \(u\) to at least one site of \(\hat v\). We say \(v\) is \emph{reachable} from \(u\) when this event holds\footnote{It would be natural to allow \(\|b_i-b_{i-1}\|_1\leq1\) in (\ref{for:diagdiff}). However, similar to \cite{Grimmett2012}, this is equivalent to the case when \(\|b_i-b_{i-1}\|_1=1\) if a \(d\)-path can be extended by vertical steps \emph{towards} \(\mathbb{L}\).}.

We now define several sets of sites and some corresponding values, which will let us construct the desired two-sided Lipschitz surface.

\begin{mydef}\label{def:hill}\label{def:mountain}
The \emph{hill} around \(u\in\mathbb{L}\) is the set of all sites that are reachable from \(u\),
	\[H_u:=\{v\in\mathbb{Z}^{d+1}:\,u\rightarrowtail_{d}v\}.\]
The \emph{mountain} around \(v\in\mathbb{L}\) is the union of all hills that contain \(v\)
	\[
		M_v=\bigcup_{u:v\in H_u}H_u.
	\]
\end{mydef}

\begin{figure}[!hbt]
\centering
\begin{subfigure}{.5\textwidth}
  \centering
  \includegraphics[width=1\linewidth]{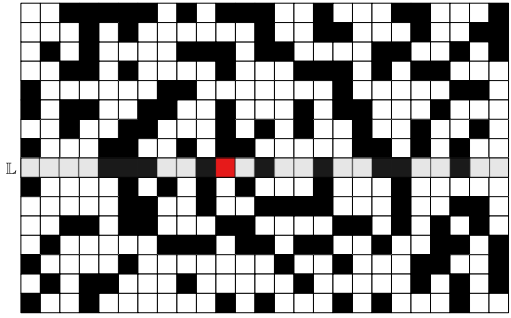}
  \caption{Site percolation with the closed site \\\(u\in\mathbb{L}\) marked in red.}
  \label{fig:sub1}
\end{subfigure}%
\begin{subfigure}{.5\textwidth}
  \centering
  \includegraphics[width=1\linewidth]{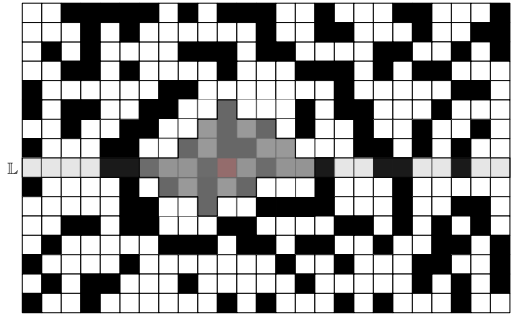}
  \caption{The hill \(H_u\) of all sites that can be reached from \(u\) overlaid in gray.}
  \label{fig:sub2}
\end{subfigure}
\begin{subfigure}{.5\textwidth}
  \centering
  \includegraphics[width=1\linewidth]{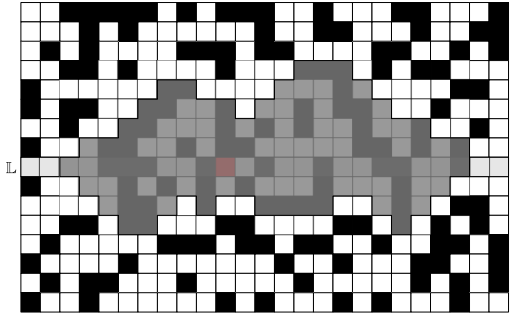}
  \caption{The mountain \(M_u\) of all hills that contain\\ \(u\) overlaid in gray.}
  \label{fig:sub3}
\end{subfigure}%
\begin{subfigure}{.5\textwidth}
  \centering
  \includegraphics[width=1\linewidth]{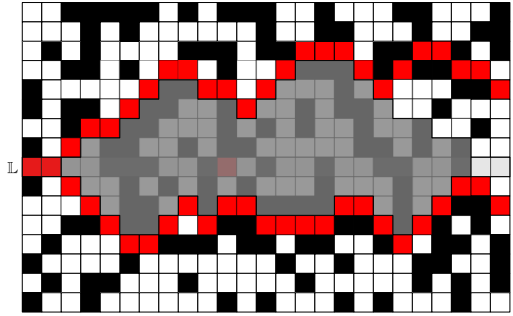}
  \caption{A possible realization of the Lipschitz surface \(F\) in red (cf. \Cref{def:lip_surf_main}).}
  \label{fig:sub4}
\end{subfigure}
\caption{Examples of \(H_u\) and \(M_u\) of a chosen site \(u\) for site percolation on \(\mathbb{Z}^2\). Open sites are white and closed sites are black.}
\label{fig:hills_and_mountains}
\end{figure}

Note that the sets \(H_u\) and \(M_v\) can be empty; in particular, \(H_u=\emptyset\) if \(u\) is an open site. We define
the positive and negative \emph{depths} of a set \(S\subset\mathbb{Z}^{d+1}\) at site \(u=(b,h)\in\mathbb{Z}^{d+1}\) as
	\[l_u^+(S)=\sup\{k:(b,h+k)\in S\},\]
and
	\[l_u^-(S)=\sup\{k:(b,h-k)\in S\}.\]

Define also the \emph{radius} of a set \(S\subset\mathbb{Z}^{d+1}\) around \(u\) as
	\[\operatorname{rad}_u(S)=\sup\{\|v-u\|_1:\,v\in S\}.\]
We are now ready to define our two-sided Lipschitz surface \(F\); see \Cref{fig:hills_and_mountains} for an illustration of \(H_u\), \(M_u\) and \(F\), and \Cref{fig:surface} for an example of \(F\) in three dimensions.

\begin{mydef}\label{def:F+andF-}
	For \(u\in \mathbb{L}\) define
		\[F_+(u)=\left\{\begin{array}{ll}
		1+l_u^+(M_u)&\textrm{if }M_u\neq\emptyset\\
		0&\textrm{if }M_u=\emptyset,
	\end{array}
	\right.\]
	and
		\[F_-(u)=\left\{\begin{array}{ll}
		-1-l_u^-(M_u)&\textrm{if }M_u\neq\emptyset\\
		0&\textrm{if }M_u=\emptyset.
	\end{array}
	\right.\]
\end{mydef}

\begin{mydef}\label{def:lip_surf_main}
	The two-sided Lipschitz surface \(F\) is defined as the set of sites
	\[
		\bigcup_{b\in\mathbb{Z}^d}(b,F_-(b))\cup(b,F_+(b)).
	\]
\end{mydef}

Note that the Lipschitz surface ``envelops'' the union of mountains \(\bigcup_{u\in\mathbb{L}}M_u\).
By definition, if \(l_u^{\pm}(M_u)\) is infinite for some \(u\), then it is infinite for all \(u\) (because of the diagonal moves of \(d\)-paths). Thus it is sufficient to show that \(l_0^{\pm}(M_0)\) is finite almost surely in order to guarantee the existence of \(F\). The theorem below establishes that \(F\) is finite almost surely; its proof follows along the lines of \cite[Theorem 1]{Grimmett2012}.

\begin{thrm}\label{thrm:surface}
	For any \(d\geq 1\), if \((E_{\textrm{bh}}(b,h))_{(b,h)\in\mathbb{Z}^{d+1}}\) is translation invariant and 
	\begin{equation}\label{for:radius_bound}
		\sum_{r\geq 1}r^d\mathbb{P}\left[\operatorname{rad}_0(H_0)>r\right]<\infty,
	\end{equation}
	then there exist almost surely a two sided Lipschitz surface \(F\) as in \Cref{def:lip_surf_main}. Moreover, the functions \(F_+\) and \(F_-\) from \Cref{def:F+andF-} satisfy
	\begin{enumerate}
	\item For each \(u=(b,0)\in\mathbb{L}\), the sites \((b,F_+(u))\) and \((b,F_-(u))\) are open.
	\item For any \(u,u'\in\mathbb{L}\) with \(\|u-u'\|_1=1\), we have \(|F_+(u)-F_+(u')|\leq1\) and \(|F_-(u)-F_-(u')|\leq1\).
	\end{enumerate}
\end{thrm}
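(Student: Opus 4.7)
I would establish the three claims—almost sure existence, openness~(1), and the Lipschitz property~(2)—mirroring the three-step strategy of \cite[Theorem~1]{Grimmett2012}.

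\emph{Almost sure existence.} It suffices to show $\mathbb{P}[l_0^+(M_0)=\infty]=0$ (and the analogous fact for $l_0^-$); the ``all-or-nothing'' property noted after \Cref{def:lip_surf_main} then extends finiteness to all bases simultaneously. The event $\{l_0^+(M_0)\geq n\}$ forces some closed $v=(b,0)\in\mathbb{L}$ to satisfy both $0\in H_v$ and $(0,k)\in H_v$ for some $k\geq n$, so that $\operatorname{rad}_v(H_v)\geq \|b\|_1+n$. A union bound over $v$ combined with translation invariance gives
\[
\mathbb{P}[l_0^+(M_0)\geq n]\leq \sum_{b\in\mathbb{Z}^d}\mathbb{P}[\operatorname{rad}_0(H_0)\geq \|b\|_1+n]\leq C\sum_{s\geq n}s^{d-1}\,\mathbb{P}[\operatorname{rad}_0(H_0)\geq s],
\]
which tends to zero by~(\ref{for:radius_bound}).

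\emph{Openness.} Fix $u=(b,0)$ and $k=F_+(u)$. If $k=0$ then $M_u$ is empty, so $u$ cannot lie in its own trivial hill and must therefore be open. If $k\geq 1$ and $(b,k)$ were closed, a witness $d$-path for $(b,k-1)\in M_u$ could be extended upward either by the observation that $(b,k)\in\widehat{(b,k-1)}$ when the path already reaches height $\geq k$ at base $b$, or by a single vertical move $(b,k-1)\to(b,k)$, which is legal since $(b,k)$ is closed and the sign condition is satisfied. Either extension places $(b,k)\in M_u$, contradicting $l_u^+(M_u)=k-1$. The argument for $F_-$ is symmetric.

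\emph{Lipschitz.} Fix neighbors $u=(b,0)$ and $u'=(b',0)$ in $\mathbb{L}$ and assume $F_+(u)=K\geq 2$; the goal is $F_+(u')\geq K-1$. Take a witness closed $w\in\mathbb{L}$ with $u\in H_w$ and $(b,K-1)\in H_w$, and let $k^*\geq K-1$ be the maximal height reached by $H_w$ at base $b$. Extending the $d$-path that reaches $(b,k^*)$ by one diagonal move gives $(b',k^*-1)\in H_w$, and hence $(b',K-2)\in H_w$. The remaining ingredient is $u'\in H_w$, which I would obtain by chaining further diagonal moves from $(b,k^*)$ alternating between bases $b$ and $b'$ and descending by one in height at each step; this lands exactly at $(b',0)=u'$ when $k^*$ is odd. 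In the parity-mismatch case $k^*$ even, one corrects parity either by truncating the witness $d$-path at $(b,k^*-1)$ when the final move into $(b,k^*)$ is vertical (so that $(b,k^*-1)$ lies on the path), or by inserting a single vertical move along the descent at a closed site supplied by the verticals already present in the witness $d$-path (such verticals must exist since the path ascends from height $0$ to $k^*\geq 1$). The resulting distinct-site $d$-path from $w$ to $u'$ establishes $u'\in H_w$, and therefore $(b',K-2)\in M_{u'}$. The argument for $F_-$ is entirely symmetric.

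\textbf{Main obstacle.} The combinatorial core is the parity-fixing construction in the Lipschitz step: one must choose the insertion point for the vertical move, and the descent directions, so that all sites in the constructed $d$-path from $w$ to $u'$ are distinct. This bookkeeping closely parallels the proof of \cite[Theorem~1]{Grimmett2012} and is the main technical burden; once it is carried out, the three-step structure delivers the two-sided Lipschitz surface and its stated properties.
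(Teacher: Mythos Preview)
Your existence and openness arguments are correct and match the paper's (your existence bound is in fact slightly cleaner than the paper's split into $\|v\|_1\le r/2$ and $\|v\|_1>r/2$). For the Lipschitz step, however, you are working much harder than necessary, and the ``main obstacle'' you identify does not need to be overcome at all. The difficulty arises because you are implicitly reading $\widehat{(b',0)}$ as the singleton $\{(b',0)\}$, so that ``$u'\in H_w$'' would require a $d$-path from $w$ terminating exactly at $(b',0)$. But the $\widehat{\phantom{x}}$ operator was introduced precisely ``in order to avoid issues of parity'': since every site with base $b'$ lies at least as far from $\mathbb{L}$ as $(b',0)$ does, one has $\widehat{(b',0)}=\{b'\}\times\mathbb{Z}$. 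Hence the single diagonal move you already perform, from $(b,k^*)$ to $(b',k^*-1)$, lands in $\widehat{(b',0)}$ and immediately yields $u'\in H_w$; then $H_w\subseteq M_{u'}$ and $(b',K-2)\in M_{u'}$, so $F_+(u')\geq K-1$. No zig-zag, no parity split, no distinctness bookkeeping. This is exactly the paper's two-line argument, phrased there as a contradiction from the hypothesis $F_+(u)\geq F_+(u')+2$.

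Your sketched parity correction is also not sound as stated: the ``closed site supplied by the verticals already present in the witness $d$-path'' need not lie at base $b$ or $b'$, so it cannot in general be spliced into a descent that alternates only between those two bases; and the zig-zag sites $(b,k^*-2),(b,k^*-4),\dots$ may already belong to the witness path, so the distinctness requirement for $d$-paths can fail before any insertion is even attempted. These are exactly the complications the $\widehat{\phantom{x}}$ convention was designed to eliminate.
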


\begin{proof}
	We start by showing item 1. First, suppose that \(M_0\neq\emptyset\), and assume the opposite, i.e. that the site \((b,F_+(u))\) is closed. By the definition of the function \(l_u^+\), the site \((b,l_u^+(M_u))\) belongs to \(M_u\). Then, since \(F_+(u)=1+l_u^+(M_u)\) and \(M_u\neq\emptyset\), we can extend the \(d\)-path reaching the site \((b,l_u^+(M_u))\) with a vertical move into the closed site \((b,F_+(u))\). This gives that \((b,F_+(u))\in M_u\), which is in contradiction with the construction of \(F_+\). When \(M_u=\emptyset\), we have \(H_u=\emptyset\) and the site \((b,F_+(u))=(b,0)\) is open by definition. The proof for \((b,F_-(u))\) is similar.
	
	Next, we establish item 2. Let \(u,u'\in\mathbb{L}\) with \(\|u-u'\|_1=1\). To show that \(|F_+(u)-F_+(u')|\leq1\), it is enough to show that \(F_+(u')-F_+(u)\geq -1\) since the roles of \(u\) and \(u'\) are symmetric. Assume the converse, that is, that \(F_+(u)\geq F_+(u')+2\). Write \(u=(b,0)\) and \(u'=(b',0)\). We have by \Cref{def:F+andF-} that \((b,F_+(u)-1)\in M_u\), so the site \((b,F_+(u)-1)\) can be reached by some \(d\)-path from \(\mathbb{L}\). Extending this path by a diagonal move, we have that the site \((b',F_+(u)-2)\in M_u\). Since \((b',F_+(u)-2)\in\reallywidehat{(b',F_+(u'))}\) by our assumption, we obtain that \((b',F_+(u'))\in M_u\), contradicting the construction of \(F_+\).
	The proof for \(F_-\) is similar.

	Finally, we prove the almost sure existence of \(F_+\), that is, that \(l_0^+(M_0)\) is almost surely finite. Because of the diagonal moves we have that \(l_0(M_0)\leq\operatorname{rad}_0(M_0)\), so we only need to show that \(\operatorname{rad}_0(M_0)<\infty\). By translation invariance we have
	\begin{align*}
	\mathbb{P}[\operatorname{rad}_0(M_0)\geq r]&\leq \sum_{v\in \mathbb{L}}\mathbb{P}[0\in H_v,\,\operatorname{rad}_v(H_v)\geq r-\|v\|_1]\\
	&=\sum_{v\in \mathbb{L}}\mathbb{P}[v\in H_0,\,\operatorname{rad}_0(H_0)\geq r-\|v\|_1]
	\end{align*}

The last sum can be split into two sums depending on whether or not \(\|v\|_1\leq r/2\). In the first case, the sum is no larger than \(cr^d\mathbb{P}[\operatorname{rad}_0(H_0)\geq r/2]\) for some constant \(c\), and by (\ref{for:radius_bound}) this term goes to 0 as \(r\) increases. Since \(\{v\in H_{0}\}\subseteq\{\operatorname{rad}_0(H_0)\geq\|v\|_1\}\), we can bound the sum for which \(\|v\|_1> r/2\) by
	\begin{align*}
		\sum_{\substack{v\in \mathbb{L}\\\|v\|_1\geq r/2}}\mathbb{P}[v\in H_0]&\leq\sum_{s\geq r/2}Cs^{d}\mathbb{P}[\operatorname{rad}_0(H_0)\geq s],
	\end{align*}
	where \(C>0\) is a constant that depends only on \(d\). By (\ref{for:radius_bound}) this term also goes to 0 as \(r\) increases, which concludes the proof.
\end{proof}

\section{Multi-scale setup}\label{section:multiscale}

In light of \Cref{thrm:surface}, the key in establishing the existence of the Lipschitz surface is to control the radius of \(H_0\). To do this, we look at all paths starting from \(0\) and the probability that they are a \(d\)-path. The challenge is that the event that a given cell \((b,h)\) is bad is not independent of other space-time cells. To solve this problem we resort to a multi-scale approach.
After defining the multi-scale tessellation, we will also state a result regarding \emph{local mixing} of particles, which we will use to link cells from one scale to the next.

\subsection{Tessellation}\label{section:tessel}
We start by tessellating space at multiple scales. Let \(m>0\) be a sufficiently large integer
and let \(\epsilon\in(0,1)\). 
For each scale \(k\geq 1\) we will tessellate the graph \(G=(\mathbb{Z}^d,E)\) into cubes of length \(\ell_k\) such that
\[
	\ell_1=\ell\quad\textrm{and}\quad\ell_k=mk^a\ell_{k-1}=m^{k-1}(k!)^a\ell,
\]
where \(a\) is a large integer we will set later. Set also \(\ell_0=\ell/m\).

We index the cubes by integer vectors \(i\in\mathbb{Z}^d\) and denote them by \(S_k(i)\). Then, for \(i=(i_1,i_2,\dots,i_d)\) we have
\[
	S_k(i)=\prod_{j=1}^d[i_j\ell_k,(i_j+1)\ell_k].
\]

This makes \(S_k(i)\) the union of \((mk^a)^d\) cubes of scale \(k-1\). Next, we introduce the following hierarchy. For \(k,j\geq0\) and \(i\in\mathbb{Z}^d\) we define
\[
	\pi_k^{(j)}(i)=i'\quad\textrm{iff}\quad S_k(i)\subseteq S_{k+j}(i').
\]
We say \((k+1,i')\) is the \emph{parent} of \((k,i)\) if \(\pi_k^{(1)}(i)=i'\) and in this case also say \((k,i)\) is a \emph{child} of \((k+1,i')\). We define the set of descendants of \((k,i)\) as \((k,i)\) and the union of all the descendants of the children of \((k,i)\) or as only \((k,i)\) in the case \((k,i)\) has no children.

Let \(w\) be a ``sufficiently large'', but otherwise arbitrary positive value; We will later require \(w\) to satisfy the inequality from \Cref{thrm:surface_event_simple}. For now, we can think of \(w\) as a large constant.
We introduce a new variable \(n\) that satisfies
\begin{equation}\label{for:eta}
	n^d=\frac{m}{7\eta}\quad\textrm{and}\quad n \geq \frac{1}{2} + \frac{w}{2\eta},
\end{equation}
where we impose the requirement on \(m\) to be large enough to yield \(n>1\) and to satisfy the inequality in (\ref{for:eta}). We also assume \(m\) is specified in such a way that \(n\) is an integer. Recall that \(\eta\) is the parameter introduced in the definition of super cells in the tessellation of \Cref{section:statement}, and that \(\eta\geq 1\) is an integer.

We define some larger cubes based on \(S_k(i)\). For \(k\geq 0\) define the \emph{base} and the \emph{area of influence} of \(S_k(i)\) as, respectively,
\[
	S_k^{\textrm{base}}(i)=\bigcup_{i':\|i-i'\|_{\infty}\leq\eta mn(k+1)^a}S_k(i')\quad\textrm{and}\quad S_k^{\textrm{inf}}(i)=\bigcup_{i':\|i-i'\|_{\infty}\leq 2\eta mn(k+1)^a} S_k(i').
\]
For \(k\geq 1\) we also define the \emph{extended} cube
\[
	S_k^{\textrm{ext}}(i)=\bigcup_{i':\pi_{k-1}^{(1)}(i')=i}S_{k-1}^{\textrm{base}}(i').
\]
Observe that \(S_k^{\textrm{ext}}(i)\) is the union of the bases of the children of \((k,i)\), which are the \((k-1)\)-cubes contained in \(S_k(i)\). We can see that \(S_k(i)\subset S_k^{\textrm{base}}(i)\subset S_k^{\textrm{inf}}(i)\) and
\begin{equation}\label{eq:Shierarch}
	S_{k+1}^{\textrm{ext}}(\pi_k^{(1)}(i))=\bigcup_{i':\pi_k^{(1)}(i')=\pi_k^{(1)}(i)}S_k^{\textrm{base}}(i')\supset S_k^{\textrm{base}}(i).
\end{equation}

\begin{figure}[!ht]
\centering
\includegraphics{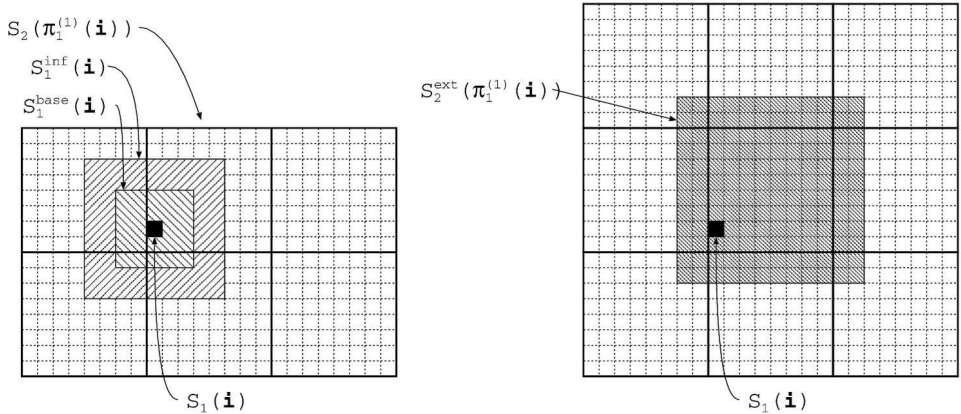}
\caption{Illustration of the tessellation of~$\mathbb{R}^d$. Different
scales are represented by the thickness of the lines; for example,\vspace*{1pt}
$S_2(\pi_1^{(1)}(i))$
is the square with thick borders that contains $S_1(i)$, which is
the black square.
Note that $S_1(i)$ is at the same position in both the left and the right pictures above, illustrating that $S^\mathrm{base}_1(i) \subset S^{\mathrm{ext}}_{2}(\pi_1^{(1)}(i))$ as given in~(\protect\ref{eq:Shierarch}).}\label{fig:spacescale}
\end{figure} 

\vspace{0.11cm}
\begin{remark}\label{rem:supercube_in_extended_cube}
	An important property derived from these definitions is that an extended cube of scale 1 has side length \(\ell+2\eta mn\ell_0=(1+2\eta n)\ell\). Therefore, for any \(i\in\mathbb{Z}^d\), the extended cube \(S_1^{\textrm{ext}}(i)\) contains the super cube \(i\) defined in the tessellation of \Cref{section:statement}. By the inequality in (\ref{for:eta}), we also have that the extended cube \(S_1^{\textrm{ext}}(i)\) has enough ``slack'' that this remains true even if we extend the super cube \(i\) by an additional factor of \(1+w\) in all directions.
\end{remark}

Now, we define the multi-scale tessellation of time. Let
\[
	\epsilon_1=\epsilon\quad\textrm{and}\quad \epsilon_k=\epsilon_{k-1}-\frac{\epsilon}{k^2}\quad\textrm{for all } k\geq 2.
\]
Define also \(\epsilon_0=2\epsilon\) for consistency. Let
\begin{equation}\label{for:beta}
	\beta_k=C_{\textrm{mix}}\frac{\ell_{k-1}^2}{(\epsilon_{k-1}-\epsilon_k)^{4/\Theta}}=C_{\textrm{mix}}\frac{\ell_{k-1}^2k^{8/\Theta}}{\epsilon^{4/\Theta}}\quad\textrm{for all } k\geq 1,
\end{equation}
where \(C_{\textrm{mix}}\geq 2^{4/\Theta}c_0\), and \(c_0\), \(\Theta\) are constants that will be given existence by \Cref{thrm:mixing} below. To simplify the notation, we assume that \(\frac{1}{\Theta}\) is an integer; otherwise we could work with \(\left\lceil\frac{1}{\Theta}\right\rceil\) instead. For \(k=1\) we set \(\beta=\beta_1=C_{\textrm{mix}}\frac{(\ell/m)^2}{\epsilon^{4/\Theta}}\). Given \(\beta/\ell^2\) and \(\epsilon\), \(m\) can be set sufficiently large so that \(C_{\textrm{mix}}\geq 2^{4/\Theta}c_0\). Observe that
\begin{equation}\label{for:beta_ratio}
	\frac{\beta_{k+1}}{\beta_k}=\frac{\ell_k^2(k+1)^{8/\Theta}}{\ell_{k-1}^2k^{8/\Theta}}=m^2k^{2a-8/\Theta}(k+1)^{8/\Theta}\quad\textrm{for all }k\geq1.
\end{equation}
Now, for scale \(k\geq 1\), we tessellate time into intervals of length \(\beta_k\). We index the time intervals by \(\tau\in\mathbb{Z}\) and denote them by \(T_k(\tau)\), where
\[
	T_k(\tau)=[\tau\beta_k,(\tau+1)\beta_k).
\]
We allow time to be negative and note that \(\beta_{k+1}/\beta_k\) is always an integer by (\ref{for:beta_ratio}) if \(a\) is chosen larger than \(4/\Theta\), which gives that a time interval of scale \(k\) is contained in a time interval of scale \(k+1\). We therefore assume from now on that \(a\) is an integer and sufficiently large for 
\begin{equation}\label{eq:theta_a}
	2a-8/\Theta>1
\end{equation}
to hold. 

Let \((k,\tau)\) refer to the time interval \(T_k(\tau)\). We also introduce a hierarchy over time, but which is different than the one defined for the cubes. For all \(k\) and \(\tau\) let \(\gamma_k^{(0)}(\tau)=\tau\), and for \(j\geq 1\), define
\[
	\gamma_k^{(j)}(\tau)=\tau'\quad\textrm{if}\quad \gamma_k^{(j-1)}(\tau)\beta_{k+j-1}\in T_{k+j}(\tau'+1).
\]

\begin{figure}[!h]
\centering
\includegraphics{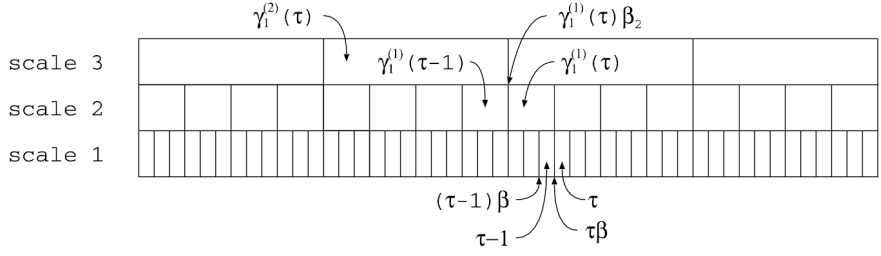}

\caption{Time scale. The horizontal axis represents time and the
vertical axis represents the scale.
Note that $\gamma_1^{(1)}(\tau)=\gamma_1^{(1)}(\tau+1)=\gamma
_1^{(1)}(\tau+2)$.}
\label{fig:timescale}
\end{figure}

For the time tessellation, if \(\tau'=\gamma_k^{(1)}(\tau)\), then the interval at scale \(k+1\) that contains \(T_k(\tau)\) is \(T_{k+1}(\tau'+1)\). For any \(j'\leq j\), we have \(\gamma_k^{(j)}=\gamma_{k+j'}^{(j-j')}(\gamma_k^{(j')})\). Thus, for \(\tau,\tau'\in\mathbb{Z}\) and \(k\geq 1\) we say that \((k+1,\tau')\) is the \emph{parent} of \((k,\tau)\), if \(\gamma_k^{(1)}(\tau)=\tau'\); in this case we also say that \((k,\tau)\) is a \emph{child} of \((k+1,\tau')\). We also define the set of descendants of \((k,\tau)\) as \((k,\tau)\) and the union of the descendants of the children of \((k,\tau)\) or only \((k,\tau)\) in the case \((k,\tau)\) has no children.

Now, for any \(i\in\mathbb{Z}^d\), \(k\geq 1\), \(\tau\in \mathbb{Z}\), we define the space-time parallelogram
\[
	R_k(i,\tau)=S_k(i)\times T_k(\tau),
\]
and note that these parallelograms are a tessellation of space and time. For \(k=1\) this is the same \(R_1\) defined in the tessellation of \Cref{section:statement}.

We extend \(\pi\) and \(\gamma\) to a hierarchy of space and time. Then, letting \((k,i,\tau)\) refer to the space-time cell \(S_k(i)\times T_k(\tau)\), we define the \emph{descendants} of \((k,i,\tau)\) as the cells \((k',i',\tau')\) so that \((k',i')\) is a descendant of \((k,i)\) and \((k',\tau')\) is a descendant of \((k,\tau)\). We  also say \((k,i,\tau)\) is an \emph{ancestor} of \((k',i',\tau')\) if \((k',i',\tau')\) is a descendant of \((k,i,\tau)\).

\subsection{A fractal percolation process}\label{section:fractal}

We now define the percolation process we will analyze. For the remainder of the paper, let \(E(i,\tau):=\mathbbm{1}_{E_{\textrm{st}}(i,\tau)}
\) be the indicator random variable of the increasing event \(E_{\textrm{st}}(i,\tau)\).
 For \(k\geq 1\), define \(S_k(i)\) to be \emph{\(k\)-dense} at some time \(t\) if all \((\frac{\ell_k}{\ell_{k-1}})^d=(mk^a)^d\) cubes \(S_{k-1}(i')\subset S_k(i)\) contain at least \((1-\epsilon_k)\lambda_0\sum_{y\in S_{k-1}(i')}\mu_y\) particles at time \(t\). For a cell \((k,i,\tau)\) let \(D_k(i,\tau)\) be the indicator random variable such that
\[
	D_k(i,\tau)=1\quad\textrm{iff}\quad S_k(i)\textrm{ is }k\textrm{-dense at time }\tau\beta_k.
\]
We also define a more restrictive indicator random variable:

\begin{center}
	\begin{minipage}{0.90\linewidth}
		\(D_k^{\textrm{ext}}(i,\tau)=1\) iff, at time \(\tau\beta_{k}\), all cubes \(S_{k-1}(i')\) of scale \(k-1\) contained in \(S_k^{\textrm{ext}}(i)\) have at least \((1-\epsilon_k)\lambda_0\sum_{y\in S_{k-1}(i')}\mu_y\) particles whose displacement throughout \([\tau\beta_k,(\tau+2)\beta_k]\) is in \(Q_{\eta mnk^a\ell_{k-1}}\).
	\end{minipage}
\end{center}
Recall the definition of the displacement of a particle from \Cref{def:displacement}.
Then \(D_k^{\textrm{ext}}(i,\tau)\leq D_k(i,\tau)\) for all cells \((k,i,\tau)\).
\vspace{0.11cm}
\begin{remark}
	An important property of this definition is that, when \(D_k^{\textrm{ext}}(i,\tau)=1\), if \((k-1,i',\tau')\) is a child of \((k,i,\tau)\), then we know that there are enough particles in \(S_{k-1}^{\textrm{base}}(i')\) at time \(\tau\beta_k\) and these particles never leave the cube \(S_{k-1}^{\textrm{inf}}(i')\) during the interval \([\tau\beta_k,\tau'\beta_{k-1}]\). This will let us apply \Cref{thrm:mixing} to show that if \(D_k^{\textrm{ext}}(i,\tau)=1\), then \(D_{k-1}^{\textrm{ext}}(i',\tau')\) is likely to be~1.
\end{remark}

Define
\begin{center}
	\begin{minipage}{0.90\linewidth}
		\(D_k^{\textrm{base}}(i,\tau)=1\) iff, at time \(\gamma_k^{(1)}(\tau)\beta_{k+1}\), all cubes \(S_k(i')\) of scale \(k\) inside \(S_k^{\textrm{base}}(i)\) contain at least \((1-\epsilon_{k+1})\lambda_0\sum_{y\in S_k(i')}\mu_y\) particles whose displacement throughout \([\gamma_k^{(1)}(\tau)\beta_{k+1},\tau\beta_{k}]\) is in \(Q_{\eta mn(k+1)^a\ell_k}\).
	\end{minipage}
\end{center}
Note that if \(D_{k+1}^{\textrm{ext}}(\pi_k^{(1)}(i),\gamma_k^{(1)}(\tau))=1\) then \(D_k^{\textrm{base}}(i,\tau)=1\). This gives that
\begin{equation}\label{eq:dbasegeqext}
	D_k^{\textrm{base}}(i,\tau)\geq D_{k+1}^{\textrm{ext}}(\pi_k^{(1)}(i),\gamma_k^{(1)}(\tau)),\quad\textrm{for all }(k,i,\tau).
\end{equation}

We next fix a scale \(\kappa\) as being the largest scale we will consider, and define
\[
	A_{\kappa}(i,\tau)=D_{\kappa}^{\textrm{ext}}(i,\tau).
\]
For \(k\) satisfying \(2\leq k\leq \kappa-1\), we set
\[
	A_{k}(i,\tau)=\max\left\{D_k^{\textrm{ext}}(i,\tau),1-D_k^{\textrm{base}}(i,\tau)\right\}.
\]
For scale 1 we set
\[
	A_{1}(i,\tau)=\max\left\{E(i,\tau),1-D_1^{\textrm{base}}(i,\tau)\right\}.
\]
Finally, define
\begin{equation}\label{for:A}
	A(i,\tau)=\prod_{k=1}^\kappa A_k(\pi_1^{(k-1)}(i),\gamma_1^{(k-1)}(\tau)).
\end{equation}

Intuitively, a cell \((k,i,\tau)\) will be ``well behaved'' if \(A_k(i,\tau)=1\). 
More precisely, it  follows from (\ref{eq:dbasegeqext}) that if \(A_{k+1}(i',\tau')=1\) and \((k,i,\tau)\) is a descendent of \((k+1,i',\tau')\), then \(A_{k}(i,\tau)=0\) if and only if \(D_{k}^{\textrm{ext}}(i,\tau)=0\) (or \(E(i,\tau)=0\) if \(k=1\)). On the other hand, \(A_{k+1}(i',\tau')=0\) implies that \(D_{k+1}^{\textrm{ext}}(i',\tau')=0\) and by (\ref{eq:dbasegeqext}) we have that \(D_{k}^{\textrm{base}}(i,\tau)\geq 0\), so that \(A_{k}(i,\tau)=1\) if either \(D_{k}^{\textrm{base}}(i,\tau)= 0\) or \(D_{k}^{\textrm{ext}}(i,\tau)=1\) (or \(E(i,\tau)=1\) if \(k=1\)). Therefore, \(A_k(i,\tau)\) can be seen as the indicator of the event that the particles are ``well behaved'' in the cell \((k,i,\tau)\), given that they were well behaved in the ancestor cell of \((k,i,\tau)\).
Finally, whenever \(A_k(i,\tau)=0\), it follows from (\ref{for:A}) that all descendants \((1,i',\tau')\) of \((k,i,\tau)\) at scale \(1\) have \(A(i',\tau')=0\).

\subsection{\texorpdfstring{\(D\)}{D}-paths and bad clusters}\label{section:DpathsAndBadClusters}
Consider two distinct cells \((i,\tau),(i',\tau')\) of scale 1. We say that \((i,\tau)\) is \emph{adjacent} to \((i',\tau')\) if \(\|i-i'\|_\infty\leq 1\) and \(|\tau-\tau'|\leq 1\).
Also, we say that \((i,\tau)\) is \emph{diagonally connected} to \((i',\tau')\) if there exists a sequence of cells \((i,\tau)=(b_0,h_0),(b_1,h_1),\dots,(b_n,h_n)=(\hat i,\hat\tau)\), where the indices \((b_j,h_j)\) refer to the base-height index, such that all the following hold:
\begin{itemize}
	\item for all \(j\in\{1,\dots,n\}\), \(\|b_j-b_{j-1}\|_1=1\) and \(h_{j-1}-h_{j}\in\operatorname{Sign}(h_{j-1})\),
	\item \(h_ih_j\geq0\) for all \(i,j\in\{0,\dots,n\}\), 
	\item \((\hat i,\hat\tau)\) is adjacent to \((i',\tau')\) or \((\hat i,\hat\tau)=(i',\tau')\).
\end{itemize}
The definition of diagonally connected is in line with the definition of \(d\)-paths from \Cref{section:paths}, where paths can move diagonally towards \(\mathbb{L}\) regardless of the status (open or closed) of the cells. We then define a \emph{\(D\)-path} as a sequence of scale 1 cells where each cell is either adjacent or diagonally connected to the next cell in the sequence.

Recall also that a cell \((i,\tau)\) of scale \(1\) is denoted \emph{bad} if \(E_{\textrm{st}}(i,\tau)\) does not hold. 
Given a cell \((i,\tau)\) of scale 1, we define the \emph{bad cluster} \(K(i,\tau)\) as the set of cells \((i',\tau')\) of scale 1 that are bad and to which there exists a \(D\)-path from \((i,\tau)\) where all cells in the \(D\)-path are bad.
We say that a cell \((i,\tau)\) of scale \(1\) has a \emph{bad ancestry} if \(A(i,\tau)=0\) and in this case we define the \emph{cluster of bad ancestries} as
\begin{align*}
	K'(i,\tau)=\{(i',\tau')\in\mathbb{Z}^{d+1}:A(i',\tau')=0\textrm{ and }\exists\textrm{ a }D\textrm{-path from }\\
	(i,\tau)\textrm{ to }(i',\tau')\textrm{ where each cell}\\
	\textrm{of the path has a bad ancestry}\}.
\end{align*}

\begin{lemma}\label{lemma:EgeqA}
	For each cell \((i,\tau)\) of scale 1, we have that \(E(i,\tau)\geq A(i,\tau)\). This implies that \(K(i,\tau)\subseteq K'(i,\tau)\).
\end{lemma}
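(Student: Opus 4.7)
The plan is to prove the inequality by downward induction on the scale, using the relation (\ref{eq:dbasegeqext}) as the engine that transfers ``good behaviour'' from a parent cell to its children.

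First I would reduce to the nontrivial case $A(i,\tau)=1$ (both quantities lie in $\{0,1\}$). Since $A(i,\tau)$ is a product of the indicators $A_k(\pi_1^{(k-1)}(i),\gamma_1^{(k-1)}(\tau))$ for $k=1,\ldots,\kappa$, the assumption $A(i,\tau)=1$ forces \emph{every} factor to equal $1$. In particular, $A_1(i,\tau)=\max\{E(i,\tau),1-D_1^{\mathrm{base}}(i,\tau)\}=1$, so it suffices to rule out $D_1^{\mathrm{base}}(i,\tau)=0$, i.e.\ to prove $D_1^{\mathrm{base}}(i,\tau)=1$.

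The key induction is downward on $k$. For $k=\kappa$ the definition gives directly
$A_\kappa(\pi_1^{(\kappa-1)}(i),\gamma_1^{(\kappa-1)}(\tau))=D_\kappa^{\mathrm{ext}}(\pi_1^{(\kappa-1)}(i),\gamma_1^{(\kappa-1)}(\tau))=1$. For the inductive step, suppose $D_{k+1}^{\mathrm{ext}}(\pi_1^{(k)}(i),\gamma_1^{(k)}(\tau))=1$ for some $k$ with $1\leq k\leq \kappa-1$. By (\ref{eq:dbasegeqext}) applied to the cell $(\pi_1^{(k-1)}(i),\gamma_1^{(k-1)}(\tau))$, this yields $D_k^{\mathrm{base}}(\pi_1^{(k-1)}(i),\gamma_1^{(k-1)}(\tau))=1$. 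Combined with $A_k(\pi_1^{(k-1)}(i),\gamma_1^{(k-1)}(\tau))=\max\{D_k^{\mathrm{ext}},1-D_k^{\mathrm{base}}\}=1$ (for $k\geq 2$; the $k=1$ version is the final step, handled separately below), we conclude $D_k^{\mathrm{ext}}(\pi_1^{(k-1)}(i),\gamma_1^{(k-1)}(\tau))=1$. Iterating down to $k=2$ gives $D_2^{\mathrm{ext}}(\pi_1^{(1)}(i),\gamma_1^{(1)}(\tau))=1$, and one more application of (\ref{eq:dbasegeqext}) at $k=1$ gives $D_1^{\mathrm{base}}(i,\tau)=1$. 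Plugging back into $A_1(i,\tau)=1$ then forces $E(i,\tau)=1$, completing the proof that $E(i,\tau)\geq A(i,\tau)$.

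For the set inclusion $K(i,\tau)\subseteq K'(i,\tau)$, the first part does all the work. If $(i',\tau')\in K(i,\tau)$, then there is a $D$-path $(i,\tau)=(b_0,h_0),\ldots,(b_n,h_n)=(i',\tau')$ along which $E(\cdot)=0$ at every cell. By the inequality just established, $A(b_j,h_j)\leq E(b_j,h_j)=0$ at every cell of the path, i.e.\ every cell has a bad ancestry. In particular $A(i',\tau')=0$, so the same $D$-path witnesses $(i',\tau')\in K'(i,\tau)$.

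There is no real obstacle in this argument: the only subtlety is noticing that the definition of $A_k$ for $k\geq 2$ is tailored precisely so that the implication ``$D_k^{\mathrm{base}}=1$ and $A_k=1$'' forces $D_k^{\mathrm{ext}}=1$, and that (\ref{eq:dbasegeqext}) is what propagates $D_{k+1}^{\mathrm{ext}}=1$ into $D_k^{\mathrm{base}}=1$. Once these two observations are in place the downward induction runs automatically.
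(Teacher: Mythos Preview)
Your proof is correct and takes essentially the same approach as the paper: both hinge on (\ref{eq:dbasegeqext}) to propagate $D_{k+1}^{\mathrm{ext}}=1$ down to $D_k^{\mathrm{base}}=1$, and then use the form of $A_k$ to force $D_k^{\mathrm{ext}}=1$ (or $E=1$ at $k=1$). The paper packages the same cascade algebraically, writing $X_k$ for $D_k^{\mathrm{ext}}$ (or $E$ when $k=1$) and $Y_k$ for $D_k^{\mathrm{base}}$, and using the one-line identity $\max\{X_k,1-Y_k\}\,X_{k+1}\le \max\{X_k,1-X_{k+1}\}\,X_{k+1}=X_kX_{k+1}$ (from $Y_k\ge X_{k+1}$) to collapse the product to $\prod_k X_k\le X_1=E(i,\tau)$; your explicit downward induction is the unrolled version of this.
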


\begin{proof}
	Fix \((i,\tau)\in\mathbb{Z}^{d+1}\). Then, for \(k=1\), define \(X_1=E(i,\tau)\) and, for \(k\geq 2\), define \(X_k=D_k^{\textrm{ext}}(\pi_1^{(k-1)}(i),\gamma_1^{(k-1)}(\tau))\). Let \(Y_k=D_k^{\textrm{base}}(\pi_1^{(k-1)}(i),\gamma_1^{(k-1)}(\tau))\). Therefore, by the definition of \(A\) in (\ref{for:A}), we have
	\[
		A(i,\tau)=\left(\prod_{k=1}^{\kappa-1}\max\{X_k,1-Y_k\}\right)X_{\kappa}.
	\]
	We now have \(Y_k\geq X_{k+1}\) for all \(k\). Therefore, for any \(k\leq\kappa-1\), we have
	\[
		\max\{X_k,1-Y_k\}X_{k+1}\leq\max\{X_k,1-X_{k+1}\}X_{k+1}=X_kX_{k+1}.
	\]
	Applying this repeatedly, we have
	\[
		A(i,\tau)\leq\left(\prod_{k=1}^{\kappa-2}\max\{X_k,1-Y_k\}\right)X_{\kappa-1}X_{\kappa}\leq\prod_{k=1}^\kappa X_k\leq X_1=E(i,\tau).
	\]
\end{proof}

\subsection{Local mixing}\label{section:mixing}
Let \(G=(\mathbb{Z}^d,E)\) be the \(d\)-dimensional square lattice equipped with conductances \((\mu_{x,y})_{(x,y)\in E}\) satisfying (\ref{eq:mu_bounds_new}).
The next theorem shows that if particles are dense enough inside a large cube \(Q_K=[-K/2,K/2]^d\), then after particles move for some time, their distribution inside \(Q_K\) (but away from \(Q_K\)'s boundary) dominates an independent Poisson point process.

\begin{thrm}[{\cite[Theorem~4.1]{Gracar2016}}]\label{thrm:mixing}
	Let \(\mu_{x,y}\) satisfy (\ref{eq:mu_bounds_new}) for some constant \(C_M\) and \(c>0\) be an arbitrary constant. There exist positive constants \(c_0\), \(c_1\), \(C\) and \(\Theta\) such that the following holds.
	Fix \(K>\ell>0\) and \(\epsilon\in(0,1)\). Consider the cube \(Q_K\) tessellated into subcubes \((T_i)_{i}\) of side length \(\ell\).
	Suppose that at time \(0\) there is a collection of particles in \(Q_K\) with
	each subcube \(T_i\) containing at least \(\sum_{y\in T_i}\beta\mu_y>c\) particles for some \(\beta>0\) and that \(\ell\) is sufficiently large for this to be possible.
	Let \(\Delta\geq c_0\ell^2\epsilon^{-4/\Theta}\).
	Fix \(K'>0\) such that \(K-K'\geq c_1\sqrt{\Delta\log\Delta}\).
	For each \(j\), denote by \(Y_j\) the location of the \(j\)-th particle of the collection at time \(\Delta\), conditioned on having displacement in \(Q_{K-K'}\) during \([0,\Delta]\).
	Then there exists a coupling \(\mathbb{Q}\) of an independent Poisson point process \(\psi\) with intensity measure \(\zeta(y)=\beta(1-\epsilon)\mu_y\), \(y\in Q_{K'}\), and \((Y_j)_{j}\) such that \(\psi\) is a subset of \((Y_j)_{j}\) with probability at least
	\[
		1-\sum_{y\in  Q_{K'}}\exp\left\{-C\beta\mu_y\epsilon^2\Delta^{d/2}\right\}.
	\]
\end{thrm}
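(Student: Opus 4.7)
My approach is to combine Gaussian heat-kernel estimates for the uniformly elliptic conductance walk with a Poisson-coupling argument. The proof divides into three natural pieces: (i) a heat-kernel analysis of the displacement-restricted walk, (ii) a Poissonisation of the initial configuration that uses the Poisson displacement theorem to produce the target Poisson process at time $\Delta$, and (iii) a sharp concentration argument that delivers the claimed exponential rate per site.

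\textbf{Heat kernel and displacement.} By Delmotte's parabolic Harnack theorem applied under (\ref{eq:mu_bounds_new}), the transition kernel obeys two-sided Gaussian bounds: for $\Delta\geq c\ell^2$ and $\|x-y\|_\infty\leq C\sqrt{\Delta\log\Delta}$, one has $p_\Delta(x,y)\asymp\mu_y/\Delta^{d/2}$. Standard exit estimates give that a walk started at $x$ leaves $x+Q_{K-K'}$ during $[0,\Delta]$ with probability at most $\exp(-c(K-K')^2/\Delta)$, which is polynomially small in $\Delta$ because $K-K'\geq c_1\sqrt{\Delta\log\Delta}$. Consequently, the kernel $\tilde p_\Delta(x,y)$ of the walk conditioned on having displacement in $Q_{K-K'}$ agrees with $p_\Delta(x,y)$ up to a multiplicative factor $(1+o(\epsilon))$ for $x,y\in Q_{K'}$; the quantitative H\"older exponent $\Theta$ from Delmotte's theorem governs how large $c_0$ must be in the requirement $\Delta\geq c_0\ell^2\epsilon^{-4/\Theta}$ to force this $o(\epsilon)$ below $\epsilon/8$.

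\textbf{Poissonisation and target coupling.} The deterministic lower bound on initial particle counts permits an initial Poisson thinning: at each starting site $x$, independently retain a Poisson$((1-\epsilon/2)\beta\mu_x)$-distributed subset of the particles there. By standard Poisson tail bounds together with the hypothesis $\sum_{y\in T_i}\beta\mu_y>c$, this thinning succeeds uniformly in $Q_K$ except on a rare event that is absorbed by the final error sum. The retained particles form an inhomogeneous Poisson point process on $Q_K$, so the Poisson displacement theorem tells us that their positions at time $\Delta$, restricted to trajectories with displacement in $Q_{K-K'}$, form an inhomogeneous Poisson process on $Q_{K'}$ with intensity $(1-\epsilon/2)\beta\sum_x\mu_x\tilde p_\Delta(x,y)$ at $y$. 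Reversibility $\mu_x p_\Delta(x,y)=\mu_y p_\Delta(y,x)$ and $\sum_x p_\Delta(y,x)=1$ then yield $\sum_x\mu_x\tilde p_\Delta(x,y)\geq (1-\epsilon/4)\mu_y$, so this intensity is at least $(1-\epsilon)\beta\mu_y$ and the target $\psi$ can be realised as a monotone subset via the classical stochastic domination of Poisson random variables.

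\textbf{The main obstacle} is obtaining the sharp per-site failure rate $\exp(-C\beta\mu_y\epsilon^2\Delta^{d/2})$: a naive site-by-site Poisson tail bound on the thinning step above only gives $\exp(-c\epsilon^2\beta\ell^d)$ per sub-cube, which is too weak when $\Delta^{d/2}\gg\ell^d$. The correct route, rather than performing the thinning separately for each starting site, is to work directly with the Bernoulli sums $N_y=\sum_j\mathbbm{1}[Y_j=y]$, aggregating over all $\asymp\beta\Delta^{d/2}$ particles whose starting location lies in the heat-kernel window of $y$; each contributes an independent indicator of probability $\asymp\mu_y/\Delta^{d/2}$. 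A Bernstein-type concentration for this Bernoulli sum, combined with a Le Cam / Stein--Chen comparison to a Poisson of matching mean, furnishes the required rate, with $\Delta^{d/2}$ entering as the effective number of independent contributors. The global coupling is then assembled by revealing the Bernoulli indicators in lexicographic order over $y\in Q_{K'}$ and greedily matching them with an independent Poisson of intensity $(1-\epsilon)\beta\mu_y$ at each step; a union bound over $y$ produces the stated sum.
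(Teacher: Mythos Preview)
This theorem is not proved in the present paper: it is quoted verbatim from \cite[Theorem~4.1]{Gracar2016} and used as a black box, so there is no proof here to compare against. I can therefore only comment on the internal soundness of your sketch.

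The heat-kernel part is essentially right: Delmotte's two-sided Gaussian bounds and the parabolic H\"older estimate with exponent $\Theta$ are exactly what produce the requirement $\Delta\geq c_0\ell^2\epsilon^{-4/\Theta}$, and the exit-time bound handles the displacement conditioning. Your observation that reversibility gives $\sum_x\mu_x\tilde p_\Delta(x,y)\approx\mu_y$ is also the correct way to see that the expected occupation at $y$ is $\approx\beta\mu_y$.

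There are, however, two genuine gaps. First, your per-site Poisson thinning in step~2 does not make sense under the hypotheses: the assumption is only that each \emph{subcube} $T_i$ carries at least $\beta\sum_{x\in T_i}\mu_x$ particles, so individual sites may be empty and you cannot extract a Poisson$((1-\epsilon/2)\beta\mu_x)$ subset at site $x$. Any initial thinning must be done at the subcube level and then transferred via the H\"older continuity of $p_\Delta$.

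Second, and more seriously, the concentration step does not deliver the claimed rate. Writing $N_y=\sum_j\mathbbm{1}[Y_j=y]$ as a sum of $\asymp\beta\Delta^{d/2}$ independent Bernoullis with parameters $\asymp\mu_y/\Delta^{d/2}$, the mean is $\asymp\beta\mu_y$ and Bernstein (or Chernoff) gives
\[
\mathbb{P}\bigl[N_y<(1-\epsilon)\beta\mu_y\bigr]\leq\exp\bigl(-c\,\epsilon^2\beta\mu_y\bigr),
\]
with no factor $\Delta^{d/2}$ in the exponent: the exponent is governed by the mean, not by the number of summands. Likewise, Le Cam/Stein--Chen only yields $d_{TV}\bigl(N_y,\mathrm{Pois}(\mathbb{E}N_y)\bigr)\leq\sum_j p_j^2\asymp\beta\mu_y/\Delta^{d/2}$, which is polynomially rather than exponentially small in $\Delta$. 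Neither of these, nor their combination, produces $\exp(-C\beta\mu_y\epsilon^2\Delta^{d/2})$. You have also not addressed how the site-wise couplings assemble into a single Poisson process that is \emph{independent} across sites, given that the $N_y$ are negatively correlated; the greedy lexicographic matching you propose does not obviously preserve the Poisson law of $\psi$. Obtaining the correct exponent requires a genuinely different coupling construction---in the spirit of soft local times or a direct Poisson-embedding argument that exploits $\max_j\tilde p_\Delta(x_j,y)\lesssim\Delta^{-d/2}$ in a sharper way than a marginal tail bound---and this is precisely the content of the cited result.
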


\subsection{High-level overview}
Here, we explain the intuition behind the definitions from Sections \ref{section:tessel} to \ref{section:mixing} and give a high-level overview of how \Cref{thrm:mixing} is applied.

The main idea is an adaptation of fractal percolation, so we begin by presenting this more intuitive idea first. Take the \(d\)-dimensional unit cube and partition it into \(r^d\) subcubes of side length \(\frac{1}{r}\), where \(r\in\mathbb{Z}\). We refer to the cubes of this first tessellation as \(1\)-cubes, and let each of them independently be \emph{open} with probability \(p\in(0,1)\) and \emph{closed} otherwise. We now repeat this tessellating process for each open \(1\)-cube, splitting it into \(r^d\) subcubes of side length \(\frac{1}{r^2}\) which we call \(2\)-cubes. We again independently declare each of the \(2\)-cubes open with probability \(p\). The \(1\)-cubes that are closed are not partitioned again, and the entire region spanned by these cubes is considered to be closed (see \Cref{fig:fractalpercolation}). We repeat this procedure until we obtain \(z\)-cubes of side length \(\frac{1}{r^z}\).

\begin{figure}[!ht]
\centering
\includegraphics{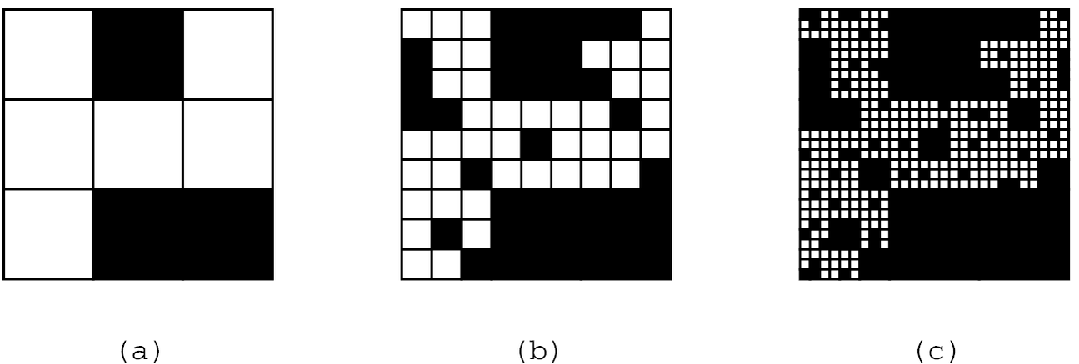}
\caption{Illustration of a fractal percolation process with $r=3$
and its $1$-cubes~\textup{(a)}, 2-cubes~\textup{(b)} and
3-cubes~\textup{(c)}. Black squares represent
closed cubes and white squares represent open cubes.}
\label{fig:fractalpercolation}
\end{figure}

We now present the intuition behind our definitions and the connection with fractal percolation. Begin at scale \(\kappa\). We tessellate space and time into very large cells. These are the cells indexed by the tuples \((\kappa,i,\tau)\) and each cell represents a cube in space and a time interval. Then, for each cell \((i,\tau)\) at scale \(\kappa\), we check whether the cell contains sufficiently many particles at the beginning of its time interval, i.e. we check whether \(A_{\kappa}(i,\tau)=D_{\kappa}^{\textrm{ext}}(i,\tau)=1\). If \(A_{\kappa}(i,\tau)=1\), we do a finer tessellation of the cell in both space and time. In terms of fractal percolation, this corresponds to the event that a large cube is open and then is subdivided into smaller cubes. On the other hand, if \(A_{\kappa}(i,\tau)=0\), we skip that cell and tessellate it no further, similarly to what happens to cubes that are closed in a fractal percolation process. We iterate this procedure until we obtain cells of volume \(\beta\ell^d\) (i.e. cells of scale 1). The main reason for employing this idea instead of analyzing the events \(D_k(i,\tau)\) directly is that the \(D_k(i,\tau)\) are highly dependent.

In the analysis, we start with the variables \(A_{k}(i,\tau)\) of the scale \(k=\kappa\), where the cells are so large that we can easily obtain \(A_{\kappa}=1\) for all \((i,\tau)\). Then we move from scale \(k+1\) to \(k\). Let \((i,\tau)\) be a cell of scale \(\kappa\). In order to analyze \(A_k(i,\tau)\), we need to observe \(A_{k+1}(i',\tau')\) such that \(\pi_k^{(1)}(i)=i'\) and \(\gamma_k^{(1)}(\tau)=\tau'\), i.e. \((k+1,i',\tau')\) is the parent of \((k,i,\tau)\) with respect to the hierarchies \(\pi\) and \(\gamma\). If \(A_{k+1}(i',\tau')=0\), then we do not need to observe \(A_k(i,\tau)\) since we will not do the finer tessellation of \(R_{k+1}(i',\tau')\) that produces the cell \((k,i,\tau)\). In this case, we will consider all descendants at scale \(1\) of the cell \((k+1,i',\tau')\) as ``bad'', and hence we will not need to observe any other descendant of \((k+1,i',\tau')\) such as \((k,i,\tau)\). On the other hand, if \(A_{k+1}(i',\tau')=1\), we know that there is a sufficiently large density of particles in the region \(S_{k}^{\textrm{base}}(i)\subset S_{k+1}^{\textrm{ext}}(i')\) that surrounds \(S_k(i)\) at time \(\tau'\beta_{k+1}\). Then, by allowing these particles to move from \(\tau'\beta_{k+1}\) to \(\tau\beta_k\), we obtain by \Cref{thrm:mixing} that many of these particles move inside \(S_k(i)\), giving that the probability that \(A_k(i,\tau)=0\), which corresponds to the event \(D_k^{\textrm{base}}(i,\tau)=1\) and \(D_k^{\textrm{ext}}(i,\tau)=0\), is small. We then apply this reasoning for all \((k,i,\tau)\). The key fact is that a dense cell at scale \(k\) makes the children of this cell likely to be dense as well.

We now give the intuition behind the different types of cubes. Let \((k,i,\tau)\) be a space-time cell of scale \(k\) and assume \((k+1,i',\tau')\) is the parent of \((k,i,\tau)\). We consider the extended cube \(S_{k+1}^{\textrm{ext}}(i')\) instead of just \(S_{k+1}(i',\tau')\) to assure that, when \(D_{k+1}^{\textrm{ext}}(i',\tau')=1\), then there is a large density of particles around \(S_k(i)\) at time \(\tau'\beta_{k+1}\) even if \(S_{k}(i)\) lies near the boundary of \(S_{k+1}(i')\); this happens since \(\{D_{k+1}^{\textrm{ext}}(i',\tau')=1\}\) guarantees that there are sufficiently many particles in \(S_k^{\textrm{base}}(i)\subset S_{k+1}^{\textrm{ext}}(i')\). We then let the particles move for time \(\tau\beta_k-\tau'\beta_{k+1}\geq\beta_{k+1}\), thereby allowing them to mix in \(S_k^{\textrm{base}}(i)\) and move inside \(S_k(i)\). While these particles move in the interval \([\tau'\beta_{k+1},\tau\beta_k)\), they never leave the \emph{area of influence} \(S_k^{\textrm{inf}}(i)\). This allows us to argue that cells that are sufficiently far apart in space are ``roughly independent'' since we only observe particles that stay inside the are of influence of their cells.

Now we give a brief sketch of the proof. We want to give an upper bound for the probability that \(K(0,0)\) is not contained in the region \([-t,t]^d\times{}[0,t]\). When that is the case, then there exists a \emph{very large} D-path of bad cells of scale \(1\). A natural strategy is to consider a fixed \(D\)-path from the cell \((0,0)\) to a cell outside of the region \([-t,t]^d\times{}[0,t]\) and show that the probability that all cells in this are bad is exponentially small, and then take the union bound over all such paths. However, this strategy seems challenging due to the dependencies among the events that the cells of a given path are bad and the fact that there is a large number of ways for two sequential cells of a \(D\)-path to be diagonally connected. We use two ideas to solve this problem: paths of cells of varying scales and well separated cells.

We start with cells of scale \(\kappa\), which are so large that we can show that, with very large probability, \(A_{\kappa}(i,\tau)=1\) for the cells \((i,\tau)\) of scale \(\kappa\) that are relevant for the existence of a \(D\)-path within \([-t,t]^d\times [0,t]\). Therefore, if a cell \((i,\tau)\) of scale \(1\) has \(A_1(i,\tau)=0\), we know that there exists an ancestor \((k',i',\tau')\) of \((1,i,\tau)\) such that \((k',i',\tau')\) is bad but its parents is good (i.e. \(A_{k'}(i',\tau')=0\)). With this, we have that if a \(D\)-path of bad cells of scale 1 exists, then there is a \(D\)-path of bad cells of varying scales. This \(D\)-path must contain sufficiently many cells because it must connect the cell \((0,0)\) to a cell outside of \([-t,t]^d\times{}[0,t]\). We take any fixed \(D\)-path of cells of varying scale and show that, given that this path contains sufficiently many cells, we can obtain a subset of the cells of the path so that these cells are ``well separated'' in space and time. We then use the fact that the \(A_k(i,\tau)\) are ``roughly independent'' for well separated cells which implies that the probability that all cells in this subset are bad is very small. Then, by applying the union bound with a careful counting argument  over all sets of well separated cells that can be obtained from a \(D\)-path of cells of varying scales, we establish \Cref{thrm:surface_event_simple}. In order to better define and count paths involving cells of multiple scales, we will introduce the notions of the support of a cell and the extended support of a cell.

\subsection{The support of a cell}
We define the \emph{time of influence} \(T_k^{\textrm{inf}}(\tau)\) of \((k,\tau)\) as
\[
	T_1^{\textrm{inf}}(\tau)=[\gamma_1^{(1)}(\tau)\beta_2,(\tau+\max\{\eta,2\})\beta_1]~~\textrm{and}~~ T_k^{\textrm{inf}}(\tau)=[\gamma_k^{(1)}(\tau)\beta_{k+1},(\tau+2)\beta_k]\textrm{ for }k\geq2,
\]
and set the \emph{region of influence} as
\[
	R_k^{\textrm{inf}}(i,\tau)=S_k^{\textrm{inf}}(i)\times T_k^{\textrm{inf}}(\tau).
\]

We assume \(m\) is sufficiently large with respect to \(\eta\) so that \(\max\{\eta,2\}\beta\leq\beta_2=m^22^{8/\Theta}\beta\), which gives that
\begin{equation}\label{for:TkSize}
	T_k^{\textrm{inf}}(\tau)\subseteq T_{k+1}(\gamma_k^{(1)}(\tau))\cup T_{k+1}(\gamma_k^{(1)}(\tau)+1)\cup T_{k+1}(\gamma_k^{(1)}(\tau)+2)
\end{equation}
We define the \emph{time support} as
\[
	T_k^{\textrm{sup}}(\tau)=\bigcup_{i=0}^8T_{k+1}(\gamma_k^{(1)}(\tau)-3+i)
\]
and the \emph{spatial support} as
\[
	S_k^{\textrm{sup}}(i)=\bigcup_{i':\|i'-\pi_k^{(1)}(i)\|_{\infty}\leq m}S_{k+1}(i'),
\]
and, for any cell \((k,i,\tau)\), we define
\[
	R_k^{\textrm{sup}}(i,\tau)=S_k^{\textrm{sup}}(i)\times T_k^{\textrm{sup}}(\tau)
\]

\begin{lemma}\label{lemma:R_inf_not_R_sup}
	For any sufficiently large \(m\) the following is true. For any cells \((k,i,\tau)\), \((k',i',\tau')\), with \(k\geq k'\), if \(R_{k'}^{\textrm{inf}}(i',\tau')\not\subseteq R_k^{\textrm{sup}}(i,\tau)\) then \(R_{k'}^{\textrm{inf}}(i',\tau')\cap R_k^{\textrm{inf}}(i,\tau)=\emptyset\).
\end{lemma}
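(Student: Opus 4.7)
The plan is to exploit the product structure $R_k^{\textrm{inf}}(i,\tau) = S_k^{\textrm{inf}}(i)\times T_k^{\textrm{inf}}(\tau)$ (and similarly for the support) to separate the lemma into a spatial claim and a temporal claim. Since both the non-containment hypothesis and the conclusion distribute over the product, it suffices to prove the contrapositives in each factor independently: (S) if $S_{k'}^{\textrm{inf}}(i')\cap S_k^{\textrm{inf}}(i)\neq\emptyset$ then $S_{k'}^{\textrm{inf}}(i')\subseteq S_k^{\textrm{sup}}(i)$; (T) if $T_{k'}^{\textrm{inf}}(\tau')\cap T_k^{\textrm{inf}}(\tau)\neq\emptyset$ then $T_{k'}^{\textrm{inf}}(\tau')\subseteq T_k^{\textrm{sup}}(\tau)$.

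For the spatial claim (S), I would bound the $L^\infty$-diameter of $S_{k'}^{\textrm{inf}}(i')$ by roughly $(4\eta mn(k'+1)^a+1)\ell_{k'}$ and the $L^\infty$-distance from any point of $S_k^{\textrm{inf}}(i)$ to $S_k(i)$ by $2\eta mn(k+1)^a\ell_k$. Using $k\geq k'$ together with the monotonicity $(k'+1)^a\ell_{k'}\leq (k+1)^a\ell_k$ (immediate from $\ell_k=m^{k-1}(k!)^a\ell$), the triangle inequality then shows that any point of $S_{k'}^{\textrm{inf}}(i')$ lies within $L^\infty$-distance at most $7\eta mn(k+1)^a\ell_k$ of $S_k(i)$ whenever the two sets intersect. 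On the other hand, since $S_k(i)\subset S_{k+1}(\pi_k^{(1)}(i))$ and $S_k^{\textrm{sup}}(i)$ is the cube of side $(2m+1)\ell_{k+1}$ co-centered with $S_{k+1}(\pi_k^{(1)}(i))$, every point of $S_k(i)$ sits at $L^\infty$-distance at least $m\ell_{k+1}=m^2(k+1)^a\ell_k$ from the boundary of $S_k^{\textrm{sup}}(i)$. Thus (S) reduces to the inequality $7\eta n\leq m$, which is exactly $n\leq n^d$ after substituting the identity $n^d=m/(7\eta)$ from (\ref{for:eta}); this holds because $d\geq 2$ and $n\geq 1$ (using that $m$ is large enough for the requirements in (\ref{for:eta})).

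For the temporal claim (T), I would invoke (\ref{for:TkSize}) and its $k=1$ analogue (available once $m$ is large enough that $\max\{\eta,2\}\beta\leq\beta_2$, which the excerpt explicitly assumes) to conclude $T_k^{\textrm{inf}}(\tau)\subseteq[\gamma_k^{(1)}(\tau)\beta_{k+1},(\gamma_k^{(1)}(\tau)+3)\beta_{k+1}]$, an interval of length at most $3\beta_{k+1}$, and likewise for $T_{k'}^{\textrm{inf}}(\tau')$ at scale $k'$. The support $T_k^{\textrm{sup}}(\tau)$ is the single interval $[(\gamma_k^{(1)}(\tau)-3)\beta_{k+1},(\gamma_k^{(1)}(\tau)+6)\beta_{k+1}]$, which pads $T_k^{\textrm{inf}}(\tau)$'s containing $3\beta_{k+1}$-interval by an additional $3\beta_{k+1}$ on each side. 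Using $\beta_{k'+1}\leq\beta_{k+1}$ from (\ref{for:beta_ratio}), the length of $T_{k'}^{\textrm{inf}}(\tau')$ is at most $3\beta_{k+1}$, so if it intersects $T_k^{\textrm{inf}}(\tau)$ then it must fit inside the enlarged interval $T_k^{\textrm{sup}}(\tau)$.

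The main obstacle I anticipate is bookkeeping the constants in (S): keeping track of off-by-one terms in the diameter bounds and making certain that the eventual reduction to $7\eta n\leq m$ is exactly the identity provided by (\ref{for:eta}) rather than a strict strengthening. The temporal part is essentially a counting argument over consecutive scale-$(k+1)$ intervals and should be routine once the scale-$1$ bound is handled separately via the assumption $\max\{\eta,2\}\beta\leq\beta_2$.
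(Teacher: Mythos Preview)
Your proposal is correct and follows essentially the same approach as the paper: both arguments factor the problem into independent spatial and temporal claims and prove, for each factor, that non-containment in the support forces non-intersection of the influence regions (you simply phrase this as the contrapositive). The bookkeeping is virtually identical---the paper's key spatial inequality $m\ell_{k+1}\geq(1+6\eta mn(k+1)^a)\ell_k$ is a minor variant of your reduction to $7\eta n\leq m$, and both follow from $n^d=m/(7\eta)$ with $d\geq 2$; the temporal argument via the $3\beta_{k+1}$ padding is the same in both.
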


\begin{figure}[!hbt]
		\centering
  		\includegraphics[width=0.7 \linewidth]{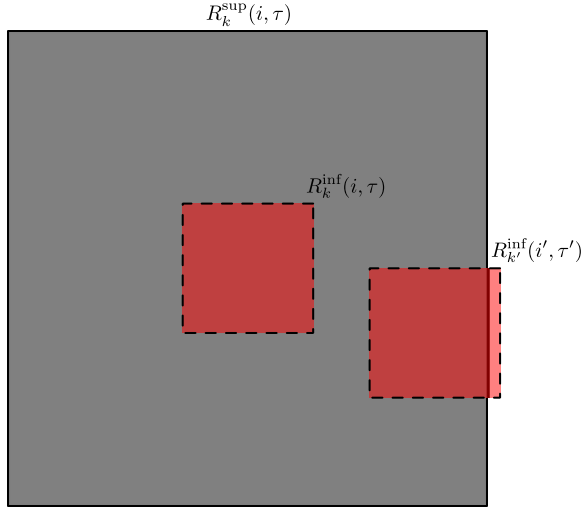}
  		\caption{The scaling of \(R_k^{\textrm{sup}}(i,\tau)\) relative to \(R_k^{\textrm{inf}}(i,\tau)\) is such that \Cref{lemma:R_inf_not_R_sup} holds for all \(k\) and \(k'\leq k\) when \(m\) is large enough. The figure shows the case when \(k=k'\); when \(k'<k\) the proof of the lemma becomes easier.}
  		\label{fig:Cubes2D}
\end{figure} 

\begin{proof}
	Note that, if \(R_{k'}^{\textrm{inf}}(i',\tau')\not\subseteq R_k^{\textrm{sup}}(i,\tau)\), then either \(T_{k'}^{\textrm{inf}}(\tau')\not\subseteq T_k^{\textrm{sup}}(\tau)\) or \(S_{k'}^{\textrm{inf}}(i')\not\subseteq S_k^{\textrm{sup}}(i)\). We start with the case that \(T_{k'}^{\textrm{inf}}(\tau')\not\subseteq T_k^{\textrm{sup}}(\tau)\) and show that this implies
	\[
		T_{k'}^{\textrm{inf}}(\tau')\cap T_{k}^{\textrm{inf}}(\tau)=\emptyset,
	\]
	which gives that \(R_{k'}^{\textrm{inf}}(i',\tau')\cap R_{k}^{\textrm{inf}}(i,\tau)=\emptyset\).

	Note that the interval \(T_{k'}^{\textrm{inf}}(\tau')\) has length at most \(3\beta_{k'+1}\) by (\ref{for:TkSize}). Then, since \(T_{k'}^{\textrm{inf}}(\tau')\not\subseteq T_k^{\textrm{sup}}(\tau)\),
	\begin{equation}\label{for:TkSizeIntersect}
		T_{k'}^{\textrm{inf}}(\tau')\cap [(\gamma_k^{(1)}(\tau)-3)\beta_{k+1}+3\beta_{k'+1},(\gamma_k^{(1)}(\tau)+6)\beta_{k+1}-3\beta_{k'+1}]=\emptyset.
	\end{equation}
	Using that \(\beta_{k'}\leq\beta_k\), we get
	\begin{align*}
		&[(\gamma_k^{(1)}(\tau)-3)\beta_{k+1}+3\beta_{k'+1},(\gamma_k^{(1)}(\tau)+6)\beta_{k+1}-3\beta_{k'+1}]\\
		&\qquad\supseteq[\gamma_k^{(1)}(\tau)\beta_{k+1},(\gamma_k^{(1)}(\tau)+3)\beta_{k+1}]\\
		&\qquad=T_{k+1}(\gamma_k^{(1)}(\tau))\cup T_{k+1}(\gamma_k^{(1)}(\tau)+1)\cup T_{k+1}(\gamma_k^{(1)}(\tau)+2)\\
		&\qquad\supseteq T_k^{\textrm{inf}}(\tau),
	\end{align*}
	where the last step follows from (\ref{for:TkSize}). This, together with (\ref{for:TkSizeIntersect}), implies that \(T_{k'}^{\textrm{inf}}(\tau')\cap T_{k}^{\textrm{inf}}(\tau)=\emptyset\).

	For the spatial component, consider the case \(S_{k'}^{\textrm{inf}}(i')\not\subseteq S_k^{\textrm{sup}}(i)\), for which we want to show that
	\[
		S_{k'}^{\textrm{inf}}(i')\cap S_{k}^{\textrm{inf}}(i)=\emptyset.
	\]
	Let \(x_1,x_2,\dots,x_d\) be defined so that \(S_k(i)=\prod_{j=1}^d[x_j,x_j+\ell_k]\). Then, we can write
	\begin{equation}\label{for:S_k_inf}
		S_k^{\textrm{inf}}(i)=\prod_{j=1}^d[x_j-2\eta mn(k+1)^a\ell_k,x_j+\ell_k+2\eta mn(k+1)^a\ell_k].
	\end{equation}
	Next, let \(y_1,y_2,\dots,y_d\) be defined so that \(S_k^{\textrm{sup}}(i)=\prod_{j=1}^d[y_j,y_j+(2m+1)\ell_{k+1}]\). Since \(S_{k'}^{\textrm{inf}}(i')\) is a cube of side length \((1+4\eta mn(k'+1)^a)\ell_{k'}\leq (1+4\eta mn(k+1)^a)\ell_k\) and \(S_{k'}^{\textrm{inf}}(i')\) is not contained in \(S_k^{\textrm{sup}}(i)\), we have that
	\begin{equation}\label{for:S_k'_inf}
		S_{k'}^{\textrm{inf}}(i')\cap\prod_{j=1}^d[y_j+(1+4\eta mn(k+1)^a)\ell_k,y_j+(2m+1)\ell_{k+1}-(1+4\eta mn(k+1)^a\ell_k]=\emptyset.
	\end{equation}

	Now, we use the fact that \(m\ell_{k+1}\leq x_j-y_j\leq (m+1)\ell_{k+1}-\ell_k\) for all \(j=1,2,\dots,d\). This and (\ref{for:S_k_inf}) give
	\begin{equation}\label{for:S_k_inf2}
		S_k^{\textrm{inf}}(i)\subseteq\prod_{j=1}^d[y_j+m\ell_{k+1}-2\eta mn(k+1)^a\ell_k,y_j+(m+1)\ell_{k+1}+2\eta mn(k+1)^a\ell_k].
	\end{equation}
	Now, using the relation between \(m\) and \(n\) in (\ref{for:eta}), we have that
	\begin{equation}\label{for:m_lk_bound}
		m\ell_{k+1}=m^2(k+1)^a\ell_k=7\eta mn^d(k+1)^a\ell_k\geq(1+6\eta mn(k+1)^a)\ell_{k}.
	\end{equation}
	Using this result in (\ref{for:S_k'_inf}) we get that \(S_{k'}^{\textrm{inf}}(i')\) does not intersect
	\begin{equation}\label{for:S_k'_inf_end}
		\prod_{j=1}^d[y_j+(1+4\eta mn(k+1)^a)\ell_k,y_j+(m+1)\ell_{k+1}+2\eta mn(k+1)^a\ell_k].
	\end{equation}
	Similarly, plugging (\ref{for:m_lk_bound}) into (\ref{for:S_k_inf2}) we see that \(S_k^{\textrm{inf}}(i)\) is contained in the space-time region given by (\ref{for:S_k'_inf_end}). These two facts establish the lemma.
\end{proof}

Another important property concerns the fact that the support of a cell contains all its descendants.

\begin{lemma}\label{lemma:R_in_R_sup}
	Assume \(m\geq 3\). For any cell \((k,i,\tau)\), if \((k',i',\tau')\) is a descendant of \((k,i,\tau)\) then
	\[
		R_{k'}(i',\tau')\subseteq R_k^{\textrm{sup}}(i,\tau).
	\]
	Moreover, \(R_k^{\textrm{sup}}(i,\tau)\) contains all the neighbors of \((k',i',\tau')\).
\end{lemma}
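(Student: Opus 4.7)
The plan is to split the desired inclusion into spatial and temporal parts, since both $R_{k'}(i', \tau') = S_{k'}(i') \times T_{k'}(\tau')$ and the support $R_k^{\mathrm{sup}}(i,\tau) = S_k^{\mathrm{sup}}(i) \times T_k^{\mathrm{sup}}(\tau)$ factor as products. Neighbors of a descendant $(k', i', \tau')$ then amount to adding an extra $\ell_{k'}$ of margin in space and $\beta_{k'}$ in time, so it suffices to show that the one-cell neighborhood of any descendant still lies in each factor of the support.

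For the spatial factor, iterating the parent map $\pi$ gives $S_{k'}(i') \subseteq S_k(i) \subseteq S_{k+1}(\pi_k^{(1)}(i))$, and $S_k^{\mathrm{sup}}(i)$ is a $(2m+1)^d$ block of scale-$(k+1)$ cubes centered on the parent cube. Because the parent sits in the middle of this block, one gets an $m\ell_{k+1}$ margin around $S_k(i)$ in every direction. Since the neighbor shift is bounded by $\ell_{k'} \leq \ell_k \leq \ell_{k+1}/m$, this margin trivially absorbs any neighbor as soon as $m \geq 1$.

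The temporal factor is the subtle part, because the time hierarchy carries an index shift: $\gamma_k^{(1)}(\tau) = \tau'$ means $T_k(\tau) \subseteq T_{k+1}(\tau'+1)$, so the children of $(k,\tau)$ lie in $T_k(\tau+1)$ rather than in $T_k(\tau)$. Iterating downwards produces a small rightward drift at each scale. The plan is to prove by induction on $k-k'$ that every descendant $(k', \tau')$ of $(k, \tau)$ satisfies $T_{k'}(\tau') \subseteq [\tau\beta_k,\,(\tau+2)\beta_k + \sum_{j=1}^{k-1}\beta_j)$, by tracking the extremal latest child at each inductive step. Adding an additional neighbor margin $\beta_{k'} \leq \beta_{k-1}$ on each end and comparing the resulting interval to $T_k^{\mathrm{sup}}(\tau) = [(\gamma_k^{(1)}(\tau)-3)\beta_{k+1},\,(\gamma_k^{(1)}(\tau)+6)\beta_{k+1}]$ via the defining relation $\tau\beta_k \in T_{k+1}(\gamma_k^{(1)}(\tau)+1)$ will then finish the temporal inclusion.

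The main obstacle is controlling the cumulative drift $\sum_{j=1}^{k-1}\beta_j$ so that it remains negligible on the $\beta_k$ scale. Here I would invoke the ratio estimate $\beta_{j+1}/\beta_j \geq m^2$ from (\ref{for:beta_ratio}) to dominate the sum by a geometric series, giving $\sum_{j=1}^{k-1}\beta_j = O(\beta_{k-1})$. Combined with the four-interval slack on each side of $T_k(\tau)$ inside $T_k^{\mathrm{sup}}(\tau)$ and the hypothesis $m \geq 3$, this leaves ample room for all descendants and their neighbors, reducing the temporal inclusion to direct interval arithmetic.
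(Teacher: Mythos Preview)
Your proposal is correct and follows essentially the same approach as the paper: both arguments factor into spatial and temporal parts, dispose of the spatial part via the obvious margin in $S_k^{\mathrm{sup}}(i)$, and handle the temporal drift by bounding the accumulated shift $\sum_j \beta_j$ via the geometric decay $\beta_{j+1}/\beta_j \geq m^2$. The only cosmetic difference is that the paper runs the temporal estimate ``upward'' (bounding $\tau\beta_k$ below by $\tau'\beta_{k'} - 2\sum_{i=k'+1}^k \beta_i$ and then using $\sum_{i=2}^k \beta_i \leq 2\beta_k$ to conclude $T_{k'}(\tau')\subseteq[\tau\beta_k,\tau\beta_k+5\beta_k]\subseteq[\tau\beta_k,\tau\beta_k+\beta_{k+1}]$), whereas you propose a downward induction tracking the rightward drift---the underlying arithmetic is identical.
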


\begin{proof}
	Fix \(i'',\tau''\) such that \((k',i'',\tau'')\) is adjacent to \((k',i',\tau')\) and assume that the ancestor of \((k',i'',\tau'')\) of scale \(k\) is not \((k,i,\tau)\), otherwise the second part of the lemma follows from the first part. We prove this lemma first for space and then for time. For space, since \((k',i',\tau')\) is a descendant of \((k,i,\tau)\) we have that \(S_{k'}(i')\subseteq S_k(i)\subseteq S_k^{\textrm{sup}}(i)\). Also, \((k',i'')\) is adjacent to \((k',i')\) which implies that the ancestor of \((k',i'')\) of scale \(k\) is adjacent to \((k,i)\). Since \(S_k^{\textrm{sup}}(i)\) contains all cells of scale \(k\) that are adjacent to \((k,i)\), it also contains \(S_{k'}(i'')\).

	We now prove the lemma for the time dimension. The first part corresponds to showing that \(T_{k'}(\tau')\subseteq T_k^{\textrm{sup}}(\tau)\). Recall that \(T_{k'}(\tau')=[\tau'\beta_{k'},(\tau'+1)\beta_{k'}]\), which is contained in \([\tau\beta_k,(\tau'+1)\beta_{k'}]\) since \((k',i',\tau')\) is a descendant of \((k,i,\tau)\). Now, note that
	\[
		\tau\beta_k=\gamma_{k'}^{(k-k')}(\tau')\beta_k\geq \gamma_{k'}^{(k-k'-1)}(\tau')\beta_{k-1}-2\beta_k\geq \tau'\beta_{k'}-2\sum_{i=k'+1}^k\beta_i.
	\]
	Then, since \(k'\geq 1\), we can use the bound
	\[
		\sum_{i=2}^k\beta_i=C_{\textrm{mix}}\sum_{i=2}^k\frac{\ell_{i-1}^2i^{8/\Theta}}{\epsilon^{4/\Theta}}=C_{\textrm{mix}}\epsilon^{-4/\Theta}\sum_{i=2}^k\ell_{i-1}^2(i^{4/\Theta})^2\leq C_{\textrm{mix}}\epsilon^{-4/\Theta}2(k^{4/\Theta})^2\ell_{k-1}^2=2\beta_k,
	\]
	where the last inequality can be proven by induction on \(k\). Then, we have that
	\begin{equation}\label{for:tau_beta}
		\tau\beta_k\geq\tau'\beta_{k'}-4\beta_k.
	\end{equation}
	Since \(k>k'\geq1\), we have \(k> 1\) and
	\[
		4\beta_k+\beta_{k'}\leq 5\beta_k=5\frac{\beta_{k+1}}{m^2 k^{2a-8/\Theta}(k+1)^{8/\Theta}}\leq \beta_{k+1}.
	\]
	This combined with (\ref{for:tau_beta}) gives
	\[
		T_{k'}(\tau')\subseteq [\tau\beta_k,\tau\beta_k+4\beta_k+\beta_{k'}]\subseteq[\tau\beta_k,\tau\beta_k+\beta_{k+1}]\subseteq T_k^{\textrm{sup}}(\tau).
	\]
	This proves the first part of the lemma. To prove the second part, we use the fact that \((k',\tau'')\) is adjacent to \((k',\tau')\) and the result above, which gives
	\[
		T_{k'}(\tau'')\subseteq[\tau\beta_k-\beta_{k'},\tau\beta_k+\beta_{k+1}+\beta_{k'}]\subseteq T_k^{\textrm{sup}}(\tau).\qedhere
	\]
\end{proof}

\section{Multi-scale analysis of \texorpdfstring{\(D\)}{D}-paths}\label{section:support_connected_paths}

In order to prove our theorems we need to control the existence of \(D\)-paths of scale 1 whose cells have a bad ancestry (cf. \Cref{lemma:EgeqA}). We will do this via a multi-scale analysis of such paths. In \Cref{section:multiscale} we defined the multi-scale tessellation we need. Here we will use this framework to consider a multi-scale version of \(D\)-paths.

We start by defining the \emph{extended support} of a cell. Given a cell \((k,i,\tau)\), define
\[
	T_k^{\textrm{2sup}}(\tau)=\bigcup_{i=0}^{26}T_{k+1}(\gamma_k^{(1)}(\tau)-12+i)
\]
and
\[
	S_k^{\textrm{2sup}}(i)=\bigcup_{i':\|i'-\pi_k^{(1)}(i)\|_{\infty}\leq 3m+1}S_{k+1}(i').
\]
Then, as before, we set \(R_k^{\textrm{2sup}}(i,\tau)=S_k^{\textrm{2sup}}(i)\times T_k^{\textrm{2sup}}(\tau)\).
\vspace{0.11cm}
\begin{remark}\label{remark:contain}
The extended support is defined in a way so that if the supports of two cells intersect, the smaller of the supports is completely contained in the extended support of the bigger of the cells.
\end{remark}

We now extend the definition of a bad cell to multiple scales. 
We say that a cell \((k,i,\tau)\) is \emph{multi-scale bad} if \(A_k(i,\tau)=0\). 
Note that for \(k=1\), this definition is stricter than that of a bad cell, i.e. since \(E(i,\tau)\leq A_1(i,\tau)\) it follows that if a cell of scale 1 is multi-scale bad, it is also bad but not the other way around. Intuitively, a super cell of scale 1 is bad whenever the increasing event \(E_{\textrm{st}}(i,\tau)\) does not hold whereas it is multi-scale bad when the increasing event does not hold and \(D_1^{\textrm{base}}(i,\tau)=1\). 

Recall that two cells \((k,i,\tau)\) and \((k,i',\tau')\) of the same scale are said to be adjacent if \(\|i-i'\|_{\infty}\leq 1\) and \(|\tau-\tau'|\leq 1\).
	Let \((k_1,i_1,\tau_1),(k_2,i_2,\tau_2)\) be two cells with \(k_1>k_2\). We say \((k_1,i_1,\tau_1)\) and \((k_2,i_2,\tau_2)\) are \emph{adjacent} if \((k_1,i_1,\tau_1)\) is adjacent to \((k_1,\pi_{k_2}^{(k_1-k_2)}(i_2),\gamma_{k_2}^{(k_1-k_2)}(\tau_2))\).
	We say \((k,i,\tau)\) is \emph{diagonally connected} to \((k',i',\tau')\) if there exists a cell \((1,\hat i,\hat \tau)\) that is a descendant of \((k,i,\tau)\) and a cell \((1,i'',\tau'')\) that is a descendant of \((k',i',\tau')\), such that \((1,\hat i,\hat \tau)\) is diagonally connected to \((1,i'',\tau'')\).

We extend the definition of \(D\)-paths to cells of arbitrary scale by referring to a \(D\)-path as a sequence of distinct cells for which any two consecutive cells in the sequence are either adjacent or the first of the two cells is diagonally connected to the second. 
For any two cells \((k_1,i_1,\tau_1)\) and \((k_2,i_2,\tau_2)\) we say that they are \emph{well separated} if \(R_{k_1}(i_1,\tau_1)\not\subseteq R_{k_2
}^{\textrm{sup}}(i_2,\tau_2)\) and \(R_{k_2}(i_2,\tau_2)\not\subseteq R_{k_1}^{\textrm{sup}}(i_1,\tau_1)\).
In order to ensure the cells we will look at are well separated but still not too far apart, we say that any two cells \((k_1,i_1,\tau_1)\) and \((k_2,i_2,\tau_2)\) are \emph{support adjacent} if \(R_{k_1}^{\textrm{2sup}}(i_1,\tau_1)\cap R_{k_2}^{\textrm{2sup}}(i_2,\tau_2)\neq\emptyset\).
We say a cell \((k_1,i_1,\tau_1)\) is \emph{support connected with diagonals} to \((k_2,i_2,\tau_2)\) if there exists a scale 1 cell contained in \(R_{k_1}^{\textrm{2sup}}(i_1,\tau_1)\) and a scale 1 cell contained in \(R_{k_2}^{\textrm{2sup}}(i_2,\tau_2)\), such that the former is diagonally connected to the latter.

Finally, define a sequence of cells \(P=((k_1,i_1,\tau_1),(k_2,i_2,\tau_2),\dots,(k_z,i_z,\tau_z))\) to be a \emph{support connected \(D\)-path} if the cells in \(P\) are mutually well separated and, for each \(j=1,2,\dots,z-1\), \((k_j,i_j,\tau_j)\) is support adjacent or support connected with diagonals to \((k_{j+1},i_{j+1},\tau_{j+1})\). 

For any \(t\), define \(\Omega_t\) to be the set of all \(D\)-paths of cells of scale \(1\) so that the first cell of the path is \((0,0)\) and the last cell of the path is the only cell not contained in the space-time region \([-t,t]^d\times[-t,t]\). Also, define \(\Omega_{\kappa,t}^{\textrm{sup}}\) as the set of all support connected \(D\)-paths of cells of scale at most \(\kappa\) so that the extended support of the first cell of the path contains \(R_1(0,0)\) and the last cell of the path is the only cell whose extended support is not contained in \([-t,t]^d\times[-t,t]\). Then the lemma below shows that we can focus on support connected \(D\)-paths instead of \(D\)-paths with bad ancestry.

\begin{lemma}\label{lemma:support_connected_paths}
	We have that
	\begin{align*}
		&\mathbb{P}\left[\exists P\in\Omega_t\textrm{ s.t. all cells of } P\textrm{ have a bad ancestry}\right]\\
		&\leq\mathbb{P}\left[\exists P\in\Omega_{\kappa,t}^{sup}\textrm{ s.t. all cells of }P\textrm{ are multi-scale bad}\right].
	\end{align*}
\end{lemma}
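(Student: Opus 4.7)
The strategy is to build, from any $D$-path $P = ((1,i_1,\tau_1),\dots,(1,i_n,\tau_n)) \in \Omega_t$ of scale-1 cells with bad ancestry, a support-connected $D$-path $P' \in \Omega_{\kappa,t}^{\textrm{sup}}$ of multi-scale bad cells. The stated probability inequality then follows by containment of events.

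The first step is to assign to each $(1,i_j,\tau_j)$ a multi-scale bad ancestor. Since $A(i_j,\tau_j)=0$, the product formula \eqref{for:A} gives some scale $k_j \in \{1,\dots,\kappa\}$ with $A_{k_j}(\pi_1^{(k_j-1)}(i_j),\gamma_1^{(k_j-1)}(\tau_j))=0$; I set $a_j$ to be this ancestor cell (picking, say, the smallest such $k_j$ for definiteness). I then verify that consecutive ancestors $a_j, a_{j+1}$ are either support-adjacent or support-connected with diagonals. When $(1,i_j,\tau_j)$ and $(1,i_{j+1},\tau_{j+1})$ are adjacent, \Cref{lemma:R_in_R_sup} gives that $R_1(i_j,\tau_j)$ and $R_1(i_{j+1},\tau_{j+1})$ both lie in $R_{a_j}^{\textrm{sup}}$ (the $\textrm{sup}$ of an ancestor contains the neighbors of its scale-1 descendants), while $R_1(i_{j+1},\tau_{j+1}) \subseteq R_{a_{j+1}}^{\textrm{sup}}$, so $R_{a_j}^{\textrm{sup}} \cap R_{a_{j+1}}^{\textrm{sup}} \neq \emptyset$, whence support-adjacency of the $\textrm{2sup}$ regions. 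When $(1,i_j,\tau_j)$ is diagonally connected to $(1,i_{j+1},\tau_{j+1})$, the multi-scale definition of diagonal connection immediately yields that $a_j$ is diagonally connected to $a_{j+1}$, hence support-connected with diagonals.

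The main step---and the principal obstacle---is to thin $(a_j)_{j=1}^n$ to a subsequence of \emph{mutually} well-separated cells while preserving the consecutive support-connectivity property. My plan is a greedy left-to-right sweep maintaining a list $L$: for each $a_j$, if $R_{a_j} \subseteq R_c^{\textrm{sup}}$ for some $c \in L$, skip $a_j$; if $R_c \subseteq R_{a_j}^{\textrm{sup}}$ for some $c \in L$, remove all such $c$ from $L$ and append $a_j$; otherwise append $a_j$. The crucial structural input is \Cref{remark:contain}: whenever two cells fail to be well-separated, the smaller of the two lies inside the extended support of the larger, so absorbing or replacing a cell by a ``larger'' one preserves every support-adjacency and diagonal connection that ran through it, because the $\textrm{2sup}$ region of the retaining cell dominates the chain. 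Finally, I truncate the resulting list at the first cell whose $\textrm{2sup}$ is not contained in $[-t,t]^d \times [-t,t]$; the first retained cell's $\textrm{2sup}$ contains $R_1(0,0)$ since $R_1(0,0) \subseteq R_{a_1}^{\textrm{sup}} \subseteq R_{a_1}^{\textrm{2sup}}$, and such a truncation point exists because $a_n$ is an ancestor of a cell outside $[-t,t]^d \times [-t,t]$, so $R_{a_n}^{\textrm{2sup}}$ leaves this region as well. I expect the delicate part of the write-up to be rigorously verifying that consecutive retained cells remain support-adjacent or support-connected with diagonals across multiple absorptions, which reduces to a book-keeping argument tracing the inclusions $R^{\textrm{sup}} \subseteq R^{\textrm{2sup}}$ through the chain of replacements via \Cref{remark:contain}.
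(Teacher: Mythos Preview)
Your overall strategy---lift each scale-$1$ cell to a multi-scale bad ancestor, then thin to a mutually well-separated support-connected path---matches the paper's two-step scheme. The gap is in the thinning step.

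Your greedy left-to-right rule retains $a_j$ and removes $c\in L$ whenever $R_c\subseteq R_{a_j}^{\mathrm{sup}}$. But this containment does \emph{not} force $\mathrm{scale}(a_j)\ge \mathrm{scale}(c)$: if $c$ has scale $k$ and $a_j$ has scale $k-1$, then $R_c$ has side length $\ell_k$ while $R_{a_j}^{\mathrm{sup}}$ has side length $(2m+1)\ell_k$, so $R_c\subseteq R_{a_j}^{\mathrm{sup}}$ is perfectly possible. In that situation \Cref{remark:contain} gives $R_{a_j}^{\mathrm{sup}}\subseteq R_c^{\mathrm{2sup}}$, not $R_c^{\mathrm{sup}}\subseteq R_{a_j}^{\mathrm{2sup}}$; indeed $R_c^{\mathrm{sup}}$ has side length $(2m+1)\ell_{k+1}$, which is far larger than the side length $(6m+3)\ell_k$ of $R_{a_j}^{\mathrm{2sup}}$. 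So when you absorb $c$ into $a_j$, a support-adjacency that ran through $R_c^{\mathrm{2sup}}$ need not survive in $R_{a_j}^{\mathrm{2sup}}$, and your ``book-keeping argument tracing the inclusions'' cannot close. The same issue arises symmetrically in the skip branch.

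The paper avoids this by separating the thinning from the path reconstruction. It first sorts the ancestor cells by scale (largest first, ties broken by $P$-order) and greedily builds a set $P'$, removing and \emph{associating} each non-well-separated cell to the current (necessarily $\ge$-scale) cell; this guarantees that whenever $(k,i,\tau)$ is associated to $(k',i',\tau')$ one has $k'\ge k$ and hence, via \Cref{remark:contain}, $R_k^{\mathrm{sup}}(i,\tau)\subseteq R_{k'}^{\mathrm{2sup}}(i',\tau')$. The actual path $P''$ is then extracted in a second pass by walking backward along the original $D$-path $P$ and jumping from each cell's associate to the associate of its predecessor in $P$; the scale inequality is exactly what makes each such jump a support-adjacency or a support-connection with diagonals. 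You should replace your left-to-right sweep by this scale-first association mechanism.
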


\begin{proof}
	We complete the proof in two stages. First, we show that if there exists a \(D\)-path \(P\in\Omega_t\), such that each cell of \(P\) has bad ancestry, then there exists a \(D\)-path of multi-scale bad cells of arbitrary scales up to \(\kappa\). Next, we show that, given the existence of such a path of multi-scale bad cells of arbitrary scales up to \(\kappa\), there exists a support connected \(D\)-path of \(\Omega_{\kappa,t}^{\textrm{sup}}\) such that all cells of the path are multi-scale bad.
	
\begin{figure}[!hbt]
\centering
\begin{subfigure}{1\textwidth}
  \centering
  \includegraphics[width=1\linewidth]{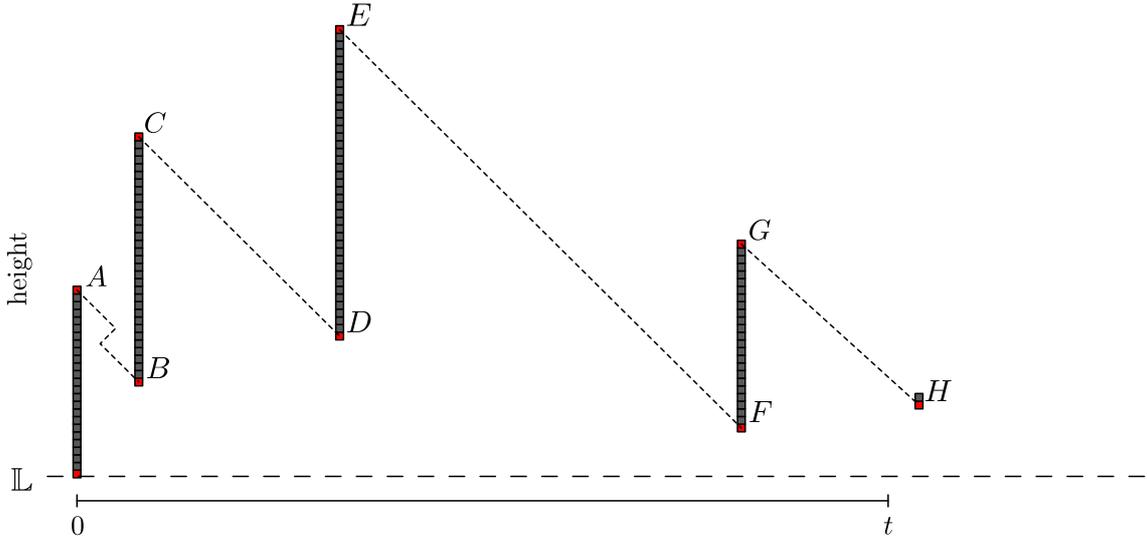}
  \caption{An example of a \(D\)-path where the cells of scale 1 of the path are gray. The dashed lines represent the cells that make a cell of the path diagonally connected to the next cell, i.e. cell \(A\) connected to cell \(B\), cell \(C\) to cell \(D\), etc.}
  \label{fig:Dsub1}
\end{subfigure}
\begin{subfigure}{.48\textwidth}
  \centering
  \includegraphics[width=1\linewidth]{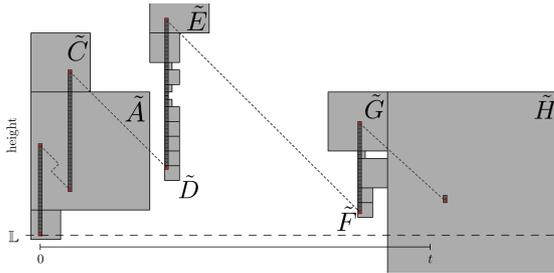}
  \caption{The corresponding \(D\)-path of multi-scale bad cells overlaid in light gray. Note that only the ancestors \(\tilde C,\tilde D\) of the pair \(C,D\) and the ancestors \(\tilde E,\tilde F\) of the pair \(E,F\) remain diagonally connected. The other diagonally connected pairs either share the same ancestor (Cells \(A\) and \(B\) both share the same ancestor \(\tilde A\)) or are adjacent (the pair \(G,H\) is diagonally connected, but the ancestors \(\tilde G,\tilde H\) are adjacent).}
  \label{fig:Dsub2}
\end{subfigure}\hspace{5mm}%
\begin{subfigure}{.48\textwidth}
  \centering
  \includegraphics[width=1\linewidth]{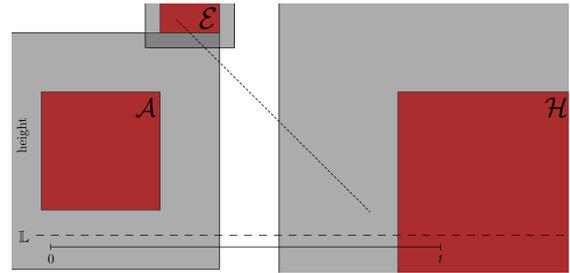}
  \caption{After performing the procedure from Step 2, the remaining three cells form a support connected \(D\)-path, where all three cells are well separated, and \(\mathcal{A}\) is support adjacent to \(\mathcal{E}\) whereas \(\mathcal{E}\) is support connected with diagonals to the \(\mathcal{H}\). Here, \(\mathcal{A}\) is the cell to which we associated \(\tilde A\) in the procedure of Step 2. Similarly, \(\tilde E\) was associated to \(\mathcal{E}\) and \(\tilde H\) was associated to \(\mathcal{H}\). All other cells from (b) were associated to \(\mathcal{A}\), \(\mathcal{E}\) or \(\mathcal{H}\) as well.}
  \label{fig:Dsub3}
\end{subfigure}%

\caption{An example of the procedure that establishes \Cref{lemma:support_connected_paths}. Starting with a \(D\)-path for which all cells have bad ancestries (a), we produce a \(D\)-path of multi-scale bad cells of arbitrary scales (b) in Step 1. Then, in Step 2 we produce a support connected \(D\)-path (c).}
\label{fig:dpath2separated}
\end{figure} 

	\vspace{5pt}\noindent\emph{Step 1:} Let \(\Omega_{\kappa,t}\) be the set of all \(D\)-paths of cells of scale at most \(\kappa\) such that the first cell of the path is an ancestor of \((0,0)\) and the last cell of the path is the only cell whose support is not contained in \([-t,t]^d\times[-t,t]\).
	We now establish that
	\begin{align*}
	&\mathbb{P}[\exists P\in\Omega_t\textrm{ s.t. all cells of } P\textrm{ have a bad ancestry}]\\
	&\leq\mathbb{P}[\exists P\in\Omega_{\kappa,t}\textrm{ s.t. all cells of }P\textrm{ are multi-scale bad}].
	\end{align*}
	Let \(P=((1,i_1,\tau_1),(1,i_2,\tau_2),\dots(1,i_z,\tau_z))\in\Omega_t\) be a \(D\)-path of cells with bad ancestries; therefore \((i_1,\tau_1)=(0,0)\) and \(R_1(i_z,\tau_z)\) is not contained in \([-t,t]^d\times[-t,t]\). For each \(j\), since \(A(i_j,\tau_j)=0\), we know by definition of \(A\) in (\ref{for:A}) that there exists a \(k_j'\) such that, if we set \(i'_j=\pi_1^{(k'_j-1)}(i_j)\) and \(\tau_j'=\gamma_1^{(k_j'-1)}(\tau_j)\), we obtain \(A_{k'_j}(i'_j,\tau'_j)=0\). 
	Now construct \(J\subseteq\{1,2,\dots,z\}\), starting with \(J=\{1,2,\dots,z\}\) and removing elements from \(J\) iteratively as we go from scale \(k=\kappa\) down to scale \(k=1\) using the following rule: 
	if there exists \(j\in J\) such that \(k'_j=k\) and \(A_{k'_j}(i'_j,\tau'_j)=0\), then remove from \(J\) all descendants of \((k'_j,i'_j,\tau'_j)\), except for the first one; i.e. keep in \(J\) only the smallest \(j'\) for which \(i'_j=\pi_1^{(k'_j-1)}(i_{j'})\) and \(\tau_j'=\gamma_1^{(k_j'-1)}(\tau_{j'})\).
	Put differently, \(J\) contains only distinct elements of the set \(\{(k_j',i_j',\tau_j'):j=1,2,\dots z\}\) which have no ancestor within \(J\). With this set we define
	\[
		\tilde P=\{(k'_j,i'_j,\tau'_j):\:j\in J\},
	\]
	and show that \(\tilde P\) is a \(D\)-path. This gives us the existence of a \(D\)-path of multi-scale bad cells of arbitrary scales starting from an ancestor of \((1,i_1,\tau_1)\) and such that the last cell \((k',i',\tau')\in\tilde P\) is an ancestor of \((1,i_z,\tau_z)\), which is not contained in \([-t,t]^d\times[-t,t]\).
	 \Cref{lemma:R_in_R_sup} then gives us that \(R_1(i_z,\tau_z)\) is contained in \(R_{k'}^{\textrm{sup}}(i',\tau')\) so that the union of the supports of the cells in \(\tilde P\) is not contained in \([-t,t]^d\times[-t,t]\).
	 We note that it is possible that the support of some other cell of \(\tilde P\) is also not  contained in \([-t,t]^d\times[-t,t]\). In this case, we modify \(\tilde P\) and remove from it all \(j'\in J\) for which \(j'>j\), where \((k_j,i_j,\tau_j)\) is the first cell of \(\tilde P\) for which \(R_{k_j}^{\textrm{sup}}(i_j,\tau_j)\) is not contained in \([-t,t]^d\times[-t,t]\). Furthermore, it is possible that \(\tilde P\) might contain loops. This does not cause any issues; in fact, the procedure in step 2 will remove any loops should they exist 
	
	Now it remains to verify that \(\tilde P\) satisfies the adjacency properties of a \(D\)-path. By construction, each cell of \(P\) has exactly one ancestor in \(\tilde P\). If we take two adjacent cells \((1,i_j,\tau_j)\), \((1,i_{j+1},\tau_{j+1})\) of \(P\), they either have the same ancestor in \(\tilde P\) or their ancestors are adjacent. This follows from the fact that two non-adjacent cells cannot have descendants of scale 1 that are adjacent. Now assume that \((1,i_j,\tau_j)\in P\) is diagonally connected to \((1,i_{j+1},\tau_{j+1})\in P\). In this case, the two cells either have the same ancestor in \(\tilde P\), have ancestors that are adjacent or the ancestor of \((1,i_j,\tau_j)\) is diagonally connected to the ancestor of \((1,i_{j+1},\tau_{j+1})\).

	\vspace{5pt}\noindent\emph{Step 2:} Here we establish that
	\begin{align*}
	&\mathbb{P}[\exists P\in\Omega_{\kappa,t}\textrm{ s.t. all cells of }P\textrm{ are multi-scale bad}]\\
	&\leq\mathbb{P}[\exists P\in\Omega_{\kappa,t}^{\textrm{sup}}\textrm{ s.t. all cells of }P\textrm{ are multi-scale bad}].
	\end{align*}
	Let \(P=((k_1,i_1,\tau_1),(k_2,i_2,\tau_2),\dots,(k_z,i_z,\tau_z))\in\Omega_{\kappa,t}\) be a \(D\)-path of multi-scale bad cells. We will now show the existence of a support connected \(D\)-path \(P'\) of multi-scale bad cells.	
	First, we order the cells of \(P\) in the following way. If two cells have the same scale, we order them by taking in the same order as they have in \(P\). For two cells of different scales, we say the cell with the larger scale comes before the other cell. This gives us a total order of the cells of \(P\). Next, let \(L\) be the list of cells of \(P\) following this order. We construct \(P'\) step-by-step, by adding the first element of \(L\) to \(P'\) and removing some elements from \(L\), repeating this until \(L\) is empty. While doing this, we associate each cell of \(P\) to a cell of \(P'\), which we will later use to show that using the ordering inherited from \(P\), \(P'\) is a support connected \(D\)-path. Assuming \((k',i',\tau')\) is the current first element of \(L\), the steps taken to construct \(P'\) are as follows:
	\begin{enumerate}
		\item Add \((k',i',\tau')\) to \(P'\) and remove it from \(L\). Associate \((k',i',\tau')\) in \(P\) with itself in \(P'\).
		\item Remove from \(L\) all cells \((k'',i'',\tau'')\) that are not well separated from \((k',i',\tau')\) and associate them to \((k',i',\tau')\).
	\end{enumerate}
	We repeat these steps until \(L\) is empty. We highlight that by construction \(P'\) contains only mutually well separated cells. Let \((k^*,i^*,\tau^*)\) be the cell that \((k_1,i_1,\tau_1)\) is associated to. Note that the extended support of this cell contains \(R_1(0,0)\), because \(R_{k_1}^{\textrm{sup}}(i_1,\tau_1)\) contains \(R_1(0,0)\) and is itself contained in the extended support of \((k^*,i^*,\tau^*)\). We also obtain that

	\[
		\bigcup_{(k',i',\tau')\in P'}R_{k'}^{\textrm{2sup}}(i',\tau')\not\subseteq[-t,t]^d\times[-t,t].
	\]

	This can be argued similarly as above, noting that the support of a cell in \(P\) was not contained in \([-t,t]^d\times[-t,t]\), so the extended support of the cell it is associated to cannot be contained either. Let the cell it is associated to be \((k',i',\tau')\). 
	
	Now it remains to show that there exists a subset of cells \(P''\subseteq P'\) which is a support connected \(D\)-path with diagonals and contains both \((k^*,i^*,\tau^*)\) and \((k',i',\tau')\).
	To see this, we will add some cells from \(P'\) to \(P''\), starting with \((k',i',\tau')\). Let \((k_j,i_j,\tau_j)\) be the first cell of \(P\) that is associated to \((k',i',\tau')\). If \(j=1\) then \((k^*,i^*,\tau^*)=(k',i',\tau')\) and \(P''\) is just this cell. Otherwise, let \((k'',i'',\tau'')\) be the cell \((k_{j-1},i_{j-1},\tau_{j-1})\) is associated to and add it to \(P''\).
	We will show later that 
	\begin{equation}\label{eq:supcon}
		(k'',i'',\tau'')\textrm{ is support adjacent or support connected with diagonals to }(k',i',\tau').
	\end{equation}
	
	Now we iterate the procedure above; that is, we take the first cell \((k_\iota,i_\iota,\tau_\iota)\) of \(P\) that is associated to \((k'',i'',\tau'')\) and either finish the construction of \(P''\) if \(\iota=1\) or continue by taking the cell that \((k_{\iota-1},i_{\iota-1},\tau_{\iota-1})\) is associated to and adding it to \(P''\).
	Note that \(\iota<j\), which guarantees that this procedure will eventually add \((k^*,i^*,\tau^*)\) to \(P''\), thus completing the construction. It is possible that the extended support of some cell of \(P''\) other than \((k',i',\tau')\) to not be contained in \([-t,t]^d\times[-t,t]\). As in step 1, we simply remove from \(P''\) all cells \((k_j,i_j,\tau_j)\) that come after the first such cell.
	
	It remains to show (\ref{eq:supcon}) holds.
	Assume for the following that \(k_{j-1}\geq k_{j}\), the converse can be argued the same way, and
	recall that in \(P\), two consecutive cells \((k_{j-1},i_{j-1},\tau_{j-1})\) and \((k_{j},i_{j},\tau_{j})\) are either adjacent or the first cell is diagonally connected to the second cell.
	Since \(k_{j-1}\geq k_{j}\), we have a cell \((\hat k, \hat i,\hat \tau)\) at scale \(\hat k = k_{j-1}\) that is an ancestor of \((k_{j},i_{j},\tau_{j})\), to which \((k_{j-1},i_{j-1},\tau_{j-1})\) is either adjacent or diagonally connected. In the first case, we have
	by \Cref{lemma:R_in_R_sup} that \(R^{\textrm{sup}}_{\hat k}(\hat i,\hat \tau)\) contains both \(R_{k_{j-1}}(i_{j-1},\tau_{j-1})\) and \(R_{k_{j}}(i_{j},\tau_{j})\). 
	Since \(\hat k= k_{j-1}\) and \((k_{j-1},i_{j-1},\tau_{j-1})\) is associated to \((k'',i'',\tau'')\), we have that \(\hat k\leq k''\). 
	Then, by \Cref{remark:contain}, we have that \(R^{\textrm{sup}}_{\hat k}(\hat i,\hat \tau)\subseteq R_{k''}^{\textrm{2sup}}(i'',\tau'')\), which gives that \(R_{k''}^{\textrm{2sup}}(i'',\tau'')\) intersects \(R_{k'}^{\textrm{2sup}}({i'},{\tau'})\).
	Alternatively, \((k_{j-1},i_{j-1},\tau_{j-1})\) is diagonally connected to \((k_{j},i_{j},\tau_{j})\). 
	This gives that \((k'',i'',\tau'')\) is support connected with diagonals to \((k',i',\tau')\) with the same diagonal steps that make \((k_{j-1},i_{j-1},\tau_{j-1})\) diagonally connected to \((k_j,i_j,\tau_j)\).
\end{proof}

The next lemma is a technical result bounding the probability that a random walk on a weighted graph remains inside a cube.

\begin{lemma}\label{lemma:exit_times}
	Let \(\Delta>0\) and, for any \(z>0\), define \(F_{\Delta}(z)\) to be the event that a random walk on \((G,\mu)\) starting from the origin stays inside \(Q_z\) throughout the time interval \([0,\Delta]\). Then, on a uniformly elliptic graph, there exist constants \(c\), \(c_1\) and \(c_2\) such that if \(\Delta>cz\), we have
	\[
		\mathbb{P}[F_{\Delta}(z)]\geq 1-c_1\exp\{-c_2 z^2/\Delta\}.
	\]
\end{lemma}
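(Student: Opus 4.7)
The plan is to combine a sub-Gaussian upper bound on the fixed-time transition density with a restart (Lévy-type) argument that converts the fixed-time estimate into a bound on the supremum over $[0,\Delta]$.

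First I would invoke the classical Gaussian heat-kernel upper bound for continuous-time random walks on $\mathbb{Z}^d$ whose conductances satisfy (\ref{eq:mu_bounds_new}) (due to Delmotte and Hebisch--Saloff-Coste, among others): there exist $C,c>0$ depending only on $d$ and $C_M$ such that
\[
p_\Delta(0,y)\;\le\;\frac{C}{\Delta^{d/2}}\exp\!\Bigl(-\frac{c\,\lVert y\rVert_\infty^{2}}{\Delta}\Bigr),\qquad y\in\mathbb{Z}^d,
\]
for all $\Delta\geq 1$. Multiplying by $\mu_y\le 2dC_M$ and summing over $y\notin Q_z$ gives the fixed-time tail
\[
\mathbb{P}(X_\Delta\notin Q_z)\;\le\;c_1'\exp(-c_2'z^{2}/\Delta).
\]

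Second, I would upgrade this to a supremum bound. Setting $\sigma=\inf\{s\ge 0 : X_s\notin Q_z\}$, the event $F_\Delta(z)^c$ is $\{\sigma\le\Delta\}$. On this event, the strong Markov property decomposes $X_\Delta=X_\sigma+(X_\Delta-X_\sigma)$, with the residual increment behaving as an independent copy of the walk started at $X_\sigma$ and run for time $\Delta-\sigma\le\Delta$. Applying the same Gaussian heat-kernel bound to this residual walk, one has
\[
\mathbb{P}\bigl(\lVert X_\Delta-X_\sigma\rVert_\infty>z/4\,\big|\,\mathcal{F}_\sigma\bigr)\;\le\;c_1''\exp(-c_2''z^{2}/\Delta)\;\le\;\tfrac{1}{2},
\]
provided $z\ge c_3\sqrt\Delta$. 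Hence on $\{\sigma\le\Delta\}$, with probability at least $1/2$ the walk satisfies $\lVert X_\Delta\rVert_\infty>z/4$, yielding $\mathbb{P}(\sigma\le\Delta)\le 2\mathbb{P}(X_\Delta\notin Q_{z/2})$, which combined with the first step gives the claim after relabelling constants.

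The role of the condition $\Delta>cz$, for $c$ chosen large, is twofold: it places us in the diffusive moderate-deviation regime where the Gaussian heat-kernel estimate has the correct order, and (for $\Delta$ above an absolute threshold) it makes the auxiliary inequality $z\ge c_3\sqrt\Delta$ needed in the restart step automatic whenever the stated bound is non-trivial. The main obstacle is simply locating the heat-kernel estimate in a form that applies verbatim to our continuous-time weighted setting---this is well known in the random-conductance literature---after which the restart step is a routine one-page argument whose constants depend only on $d$ and $C_M$.
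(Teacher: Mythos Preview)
Your argument is correct. The paper, by contrast, does not give an argument at all: it simply observes that the lemma is a restatement of exit-time estimates for random walks on uniformly elliptic weighted graphs already available in the literature (Barlow 2004, Hambly 2009), applied to the Euclidean ball of radius $z/2$ contained in $Q_z$. What you have written is essentially a self-contained derivation of those cited exit-time bounds: Gaussian heat-kernel upper bounds give the fixed-time tail, and the reflection/restart trick upgrades it to a bound on $\sup_{s\le\Delta}\lVert X_s\rVert_\infty$. This is precisely the mechanism sitting behind the references the paper invokes, so the two routes coincide once one opens the black box. Your version has the advantage of being explicit about where uniform ellipticity enters (uniformity of the heat-kernel constants in the starting point, needed for the strong Markov step) and about why the hypothesis $\Delta>cz$ is present (to keep the relevant deviations in the Gaussian rather than the Poissonian regime of the heat kernel). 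One small clarification: the sentence linking $\Delta>cz$ to the auxiliary inequality $z\ge c_3\sqrt{\Delta}$ is slightly tangled---the latter is handled not by $\Delta>cz$ but by enlarging $c_1$ so that the conclusion is vacuous when $z<c_3\sqrt{\Delta}$, which you do say but could state more directly.
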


\begin{proof}
	The result is a reformulation of the exit time result for random walks on weighted graphs from \cite{Barlow2004} and \cite{Hambly2009} by taking a ball with radius \(z/2\) that is contained in \(Q_z\) and using that the weights \(\mu_{x,y}\) are uniformly elliptic.
\end{proof}

We now give a lemma that will be used to control the dependencies involving well separated cells. Let \(\mathcal{F}_k(i,\tau)\) be the \(\sigma\)-field generated by all \(A_{k'}(i',\tau')\) for which \(T_{k'}^{\textrm{inf}}(\tau')\) does not intersect \([\gamma_k^{(1)}(\tau)\beta_{k+1},\infty)\) or both \(\tau'\beta_{k'}\leq \tau\beta_k\) and \(S_k^{\textrm{inf}}(i)\cap S_{k'}^{\textrm{inf}}(i')=\emptyset\). Furthermore, recall the value \(w\) from (\ref{for:eta}), which has until now been assume to be an arbitrary positive value. We define the following two quantities:
\begin{align}
	\psi_1&=\min\left\{\epsilon^2\lambda_0\ell^d C_{M}^{-1},\log\left(\frac{1}{1-\nu_{E_{\textrm{st}}}((1-\epsilon)\lambda,Q_{(2\eta+1)\ell},Q_{w\ell},\eta\beta)}\right)\right\}\nonumber\\
	\psi_k&=\frac{\epsilon^2\lambda_0\ell_{k-1}^d}{(k+1)^4}=\frac{\epsilon^2\lambda_0 \ell^d m^{d(k-2)}((k-1)!)^{ad}}{(k+1)^4}\quad\textrm{for }k\geq 2.\label{eq:def_psi}
\end{align} 
We now give a short intuitive explanation behind \(\psi_1\) and \(\psi_k\). Note that \(\psi_k\) is increasing in \(k\); this can easily be verified by observing that in the right-most expression for \(\psi_k\), \(ad>4\) for all \(d\geq 2\). Intuitively, one can think of \(\psi_k\) as the ``weight'' of a space-time super cell of scale \(k\). Furthermore, across all \(k\geq 2\), we can increase how much the super cells ``weigh'' by increasing the size of the tessellation (by making \(\ell\) larger) or by increasing the density of particles (by increasing \(\lambda_0\)). This holds also for super cells of scale \(1\); note however that in order to make the weight of a super cell of scale 1 large, we also need to ensure the second term of the minimum in (\ref{eq:minalpha}) is made large (say larger than some value \(\alpha\)). That is, we need to make \(\mathbb{P}[E(i,\tau)=0]\leq e^{-\alpha}\) given that there is at least a Poisson point process with intensity \((1-\epsilon)\lambda\) of particles inside of the cube \(Q_{(2\eta+1)\ell}\) and this particles have displacement in \(Q_{w\ell}\) during a time interval of length \(\eta\beta\).

\begin{lemma}\label{lemma:failure_prob}
	Let \(w\geq\sqrt{\frac{\eta\beta}{c_2\ell^2}\log\left(\frac{8c_1}{\epsilon}\right)}\) and 
	\[\alpha=\min\left\{\epsilon^2\lambda_0\ell^d C_{M}^{-1},\log\left(\frac{1}{1-\nu_{E_{\textrm{st}}}((1-\epsilon)\lambda,Q_{(2\eta+1)\ell},Q_{w\ell},\eta\beta}\right)\right\}.\] 
	If \(m\) is sufficiently large with respect to \(d\), \(\beta/\ell^2\), \(\epsilon\), \(w\) and \(C_M\), then there are positive constants \(c=c(C_M)\geq 1\) and \(\alpha_0\) so that, for all \(\alpha\geq \alpha_0\), all cells \((k,i,\tau)\) and any \(F\in\mathcal{F}_k(i,\tau)\), we have
	\begin{enumerate}
		\item \(\mathbb{P}[A_k(i,\tau)=0]\leq\exp\{-c\psi_k\}\), for all \(k=1,2,\dots,\kappa\)
		\item \(\mathbb{P}[A_k(i,\tau)=0\;|\; F]\leq\exp\{-c\psi_k\}\), for all \(k=1,2,\dots,\kappa-1\).
	\end{enumerate}
\end{lemma}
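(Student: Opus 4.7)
}
The plan is to split according to the scale $k$ and within each case exploit the same two-step recipe: first argue that, conditional on the relevant ``good'' history, the particles relevant for $A_k(i,\tau)$ look like a Poisson point process with intensity close to $\lambda_0 \mu_x$ inside the required region; then apply a Chernoff bound for Poisson concentration, a thinning by the exit-time Lemma \ref{lemma:exit_times}, and a union bound over the polynomially many scale-$(k-1)$ subcubes of the region. The three cases are $k = \kappa$ (base case, no $D^{\textrm{base}}$), $2 \le k \le \kappa-1$ (generic case), and $k = 1$ (where $\nu_{E_{\textrm{st}}}$ enters instead of a plain Chernoff bound).

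First I handle $k = \kappa$. By stationarity the particles at time $\tau \beta_\kappa$ form a Poisson point process with intensity measure $\lambda_0 \mu_x$. Thin each particle by the indicator that its trajectory has displacement in $Q_{\eta m n \kappa^a \ell_{\kappa-1}}$ throughout $[\tau \beta_\kappa, (\tau+2)\beta_\kappa]$. By Lemma \ref{lemma:exit_times} applied with $z = \eta m n \kappa^a \ell_{\kappa-1}$ and $\Delta = 2\beta_\kappa$ (so that $z^2/\Delta \gtrsim \eta^2 m^2 n^2 \kappa^{2a - 8/\Theta}$ grows with $m$), the failure probability per particle can be made smaller than any desired $\delta>0$ by choosing $m$ large. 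By thinning, the surviving particles form an independent Poisson point process with intensity at least $(1-\delta)\lambda_0 \mu_x$. Using uniform ellipticity to get $\sum_{y \in S_{\kappa-1}(i')} \mu_y \ge C_M^{-1} \ell_{\kappa-1}^d$ and choosing $\delta$ so that $1 - \delta > (1-\epsilon_\kappa)/(1-\epsilon_\kappa+c\epsilon)$ for a slack $c\epsilon$, a Chernoff estimate for Poisson tails bounds the probability that a given subcube contains fewer than $(1-\epsilon_\kappa)\lambda_0 \sum \mu_y$ good particles by $\exp(-c\epsilon^2 \lambda_0 \ell_{\kappa-1}^d C_M^{-1})$; a union bound over the polynomially many subcubes of $S_\kappa^{\textrm{ext}}(i)$ (a factor absorbed by enlarging the constant, using $\psi_k$'s growth in $k$) yields the bound $\exp(-c \psi_\kappa)$.

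For $2 \le k \le \kappa-1$, since $A_k(i,\tau)=0$ forces $D_k^{\textrm{base}}(i,\tau)=1$ and $D_k^{\textrm{ext}}(i,\tau)=0$, I condition on $D_k^{\textrm{base}}(i,\tau)=1$. This provides, at time $\gamma_k^{(1)}(\tau)\beta_{k+1}$, a collection of particles in every scale-$k$ subcube of $S_k^{\textrm{base}}(i)$ with density at least $(1-\epsilon_{k+1})\lambda_0 \mu$ whose displacement stays inside $Q_{\eta m n (k+1)^a \ell_k}$, which is exactly the hypothesis of Theorem \ref{thrm:mixing}. Applying Theorem \ref{thrm:mixing} with $K \asymp \ell_k(1+\eta m n (k+1)^a)$, $K' \asymp \ell_k m n(k+1)^a$, time gap $\Delta = \tau\beta_k - \gamma_k^{(1)}(\tau)\beta_{k+1} \ge \beta_{k+1} = C_{\textrm{mix}} \ell_k^2 (k+1)^{8/\Theta}/\epsilon^{4/\Theta}$ and ellipticity parameter $\epsilon_k - \epsilon_{k+1} = \epsilon/(k+1)^2$, one obtains a coupling with an independent Poisson point process of intensity $(1-\epsilon_k+\epsilon_{k+1})\lambda_0 \mu_y$ inside a region containing $S_k^{\textrm{ext}}(i)$ at time $\tau\beta_k$, with failure probability at most $\exp\{-c\lambda_0 \ell_{k-1}^d \epsilon^2/(k+1)^4\} = \exp\{-c\psi_k\}$. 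The lower bound $K - K' \ge c_1\sqrt{\Delta \log \Delta}$ required by Theorem \ref{thrm:mixing} follows once $m$ is large enough. I then repeat the $k=\kappa$ argument for the resulting Poisson point process: thin by the Lemma \ref{lemma:exit_times} displacement event on the shorter interval $2\beta_k$ (with the larger confining cube $Q_{\eta m n k^a \ell_{k-1}}$), and apply Chernoff plus a union bound to control the number of good particles per scale-$(k-1)$ subcube of $S_k^{\textrm{ext}}(i)$, yielding $\exp(-c\psi_k)$ overall.

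The case $k=1$ runs along the same lines, with $E_{\textrm{st}}(i,\tau)=0$ replacing the density failure. Conditional on $D_1^{\textrm{base}}(i,\tau)=1$, the mixing Theorem \ref{thrm:mixing} produces an independent Poisson point process of intensity $(1-\epsilon)\lambda$ inside the super cube $i$ (which contains $Q_{(2\eta+1)\ell}$ by Remark \ref{rem:supercube_in_extended_cube}), with failure probability $\exp(-c\psi_1)$; the lower bound on $w$ in the hypothesis guarantees via Lemma \ref{lemma:exit_times} that a further thinning keeps the particles inside $Q_{w\ell}$ during $[0,\eta\beta]$ while losing only $\epsilon/2$ of their mass. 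By the definition of $\nu_{E_{\textrm{st}}}$ and monotonicity of the increasing event $E_{\textrm{st}}(i,\tau)$, the remaining probability of $E_{\textrm{st}}(i,\tau)=0$ is at most $1 - \nu_{E_{\textrm{st}}}((1-\epsilon)\lambda, Q_{(2\eta+1)\ell}, Q_{w\ell}, \eta\beta) \le \exp(-\alpha)$, so everything is dominated by $\exp(-c\psi_1)$. Finally, the conditional bound (item 2) follows because $\mathcal{F}_k(i,\tau)$ was designed so that every event it contains is measurable with respect to either particle trajectories before time $\gamma_k^{(1)}(\tau)\beta_{k+1}$, or trajectories whose spatial range is disjoint from $S_k^{\textrm{inf}}(i)$. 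The entire argument for item 1 is expressed in terms of the Poisson measure of particles inside $S_k^{\textrm{inf}}(i)$ at time $\gamma_k^{(1)}(\tau)\beta_{k+1}$ together with their subsequent independent random walks restricted to $S_k^{\textrm{inf}}(i)$; by stationarity and the independence of disjoint pieces of the Poisson particle system and their random walk increments, this data is independent of $\mathcal{F}_k(i,\tau)$, so the same bound $\exp(-c\psi_k)$ survives conditioning.

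The main obstacle I anticipate is the bookkeeping around the conditional statement: one must carefully verify that the ``good history'' data used in the unconditional proof at scale $k$ is independent of $\mathcal{F}_k(i,\tau)$, despite the fact that random-walk trajectories are long-range in time and can in principle carry information across spatial regions. The definition of $T_{k'}^{\textrm{inf}}$ and $S_{k'}^{\textrm{inf}}$ has been chosen precisely to forbid such crossings within the conditioning set, so the argument reduces to applying the Markov property at time $\gamma_k^{(1)}(\tau)\beta_{k+1}$ together with independence of Poisson particles on disjoint sets, but this verification is the technical crux of the lemma.
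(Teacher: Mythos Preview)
Your proposal is correct and follows essentially the same approach as the paper: the same three-way split on $k$, the same use of Theorem~\ref{thrm:mixing} under $D_k^{\textrm{base}}=1$ to produce a fresh Poisson process, the same thinning via Lemma~\ref{lemma:exit_times}, and the same Chernoff-plus-union-bound conclusion.

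One organizational difference worth noting: the paper proves the conditional statement (item~2) directly, working under $F\cap\{D_k^{\textrm{base}}(i,\tau)=1\}$ from the outset, rather than first proving item~1 and then arguing that conditioning on $F$ is harmless by ``independence''. Your phrasing at the end --- that the data used is \emph{independent} of $\mathcal{F}_k(i,\tau)$ --- is slightly imprecise: the particles witnessing $D_k^{\textrm{base}}(i,\tau)=1$ may well have histories that $F$ sees. The correct point (which the paper uses) is that Theorem~\ref{thrm:mixing} takes as input a \emph{deterministic} configuration satisfying the density condition, and its conclusion depends only on the random-walk increments after time $\gamma_k^{(1)}(\tau)\beta_{k+1}$, which are independent of $F$ by the Markov property; moreover the coupled Poisson process is fresh. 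Since the bound from Theorem~\ref{thrm:mixing} is uniform over all admissible starting configurations, conditioning on $F$ (which can only affect the configuration, not the future increments) does not change the estimate. You flag this as the ``technical crux'', and that is exactly right; just be sure to invoke the Markov property rather than an independence claim about the Poisson system itself.
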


\begin{proof}
	Note that \(A_k\) is defined differently for \(k=1\) and \(2\leq k\leq\kappa-1\). We will first prove the result for \(k\geq 2\) and establish part 2 of the lemma. Since
	\[
		\mathbb{P}[A_k(i,\tau)=0\;|\;F]=\mathbb{P}[\{D_k^{\textrm{ext}}(i,\tau)=0\}\cap\{D_k^{\textrm{base}}(i,\tau)=1\}\;|\;F],
	\]
	if \(F\cap\{D_k^{\textrm{base}}(i,\tau)=1\}=\emptyset\), then the lemma holds. We now assume \(F\cap\{D_k^{\textrm{base}}(i,\tau)=1\}\neq\emptyset\) and write
	\[
		\mathbb{P}[A_k(i,\tau)=0\;|\;F]\leq\mathbb{P}[\{D_k^{\textrm{ext}}(i,\tau)=0\}\;|\;F\cap\{D_k^{\textrm{base}}(i,\tau)=1\}].
	\]
	Recall that \(\{D_k^{\textrm{base}}(i,\tau)=1\}\) gives that all cubes \(S_k(i')\) of scale \(k\) contained in \(S_k^{\textrm{base}}(i)\) have at least \((1-\epsilon_{k+1})\lambda_0\sum_{y\in S_k(i')}\mu_y\) particles at time \(\gamma_k^{(1)}(\tau)\beta_{k+1}\) and the displacement of these particles throughout \([\gamma_k^{(1)}(\tau)\beta_{k+1},\tau\beta_k]\) is in \(Q_{\eta mn(k+1)^a\ell_k}\). Remember that \(F\) reveals only information about the location of these particles before time \(\gamma_k^{(1)}(\tau)\beta_{k+1}\) since these particles never leave the cube \(S_k^{\textrm{inf}}(i)\) during the whole \([\gamma_k^{(1)}(\tau)\beta_{k+1},\tau\beta_k]\).

	We now apply \Cref{thrm:mixing} and denote the variables appearing in the statement of that theorem with a bar. We apply the theorem with
	\begin{align*}
		\bar K&=(1+2\eta mn(k+1)^a)\ell_k,\\
		\bar\ell&=\ell_k,\\
		\bar\beta&=(1-\epsilon_{k+1})\lambda_0,\\
		\bar\Delta&=\tau\beta_k-\gamma_k^{(1)}(\tau)\beta_{k+1}\in[\beta_{k+1},2\beta_{k+1}],\\
		\bar K'&\qquad\textrm{such that }\bar K-\bar K'=\eta mn(k+1)^a\ell_k, \textrm{ and}\\
		\bar \epsilon&\qquad\textrm{such that }(1-\bar\epsilon)(1-\epsilon_{k+1})=\left(1-\tfrac{(\epsilon_{k+1}+\epsilon_k)}{2}\right).
	\end{align*}
	This gives that \(\bar\epsilon\geq\frac{\epsilon_k-\epsilon_{k+1}}{2}=\frac{\epsilon}{2(k+1)^2}\). Using these values and the fact that \(m\) is large enough, we have that
	\[
		\bar K'=\ell_k+\eta mn(k+1)^a\ell_k\geq \ell_k+2\eta mnk^a\ell_{k-1},
	\]
	which is the side length of \(S_k^{\textrm{ext}}(i)\). We also have \(\bar \Delta\geq\beta_{k+1}\geq c_0\frac{\bar\ell^2}{\bar\epsilon^{4/\Theta}}\) since \(C_{\textrm{mix}}\geq 2^{4/\Theta}c_0\) in the definition of \(\beta_{k+1}\). We still have to check whether \(\bar K-\bar K'\geq C\sqrt{\bar\Delta\log\bar\Delta}\), which is equivalent to checking that
	\[
		\eta mn(k+1)^a\ell_k\geq \tilde C\sqrt{\beta_{k+1}\log\beta_{k+1}}
	\]
	for some constant \(\tilde C\). Using the definitions of \(\ell_k\) and \(\beta_{k+1}\), this inequality can be rewritten as
	\[
		\eta mn(k+1)^a \ell_k\geq \tilde C\sqrt{C_\textrm{mix}}\frac{1}{\epsilon^{2/\Theta}}(k+1)^{4/\Theta}\ell_k\sqrt{\log\beta_{k+1}}.
	\]
	Now, using the value of \(\beta_{k+1}\) and \(\ell_k\) we obtain that there exists a constant \(C\) independent of \(k\) and \(m\), but depending on \(\epsilon\) such that
	\[
		\frac{\tilde C\sqrt{C_{\textrm{mix}}}}{\epsilon^{2/\Theta}}\sqrt{\log\beta_{k+1}}\leq C\sqrt{\log k+k\log m+a\log k!+\log\ell}.
	\]
	Therefore, it remains to check that 
	\[
		\eta mn(k+1)^{a-4/\Theta}\geq C\sqrt{\log k+k\log m + a\log k!+\log\ell}.
	\]
	Since \(a-4/\Theta>\frac{1}{2}\) by (\ref{eq:theta_a}) and \(\log k!\leq k\log k\), \((k+1)^{a-4/\Theta}\) is larger than the right-hand side above for all large enough \(k\). Then, since \(\epsilon\) is fixed, setting \(m\) large enough makes the above inequality true for all \(k\geq 1\).

	Hence, we obtain a coupling between the particles that end up in \(S_k^{\textrm{ext}}(i)\) and an independent Poisson point process \(\Xi\) with intensity measure \(\zeta (y)=(1-\bar \epsilon)(1-\epsilon_{k+1})\lambda_0\mu_y=(1-\frac{\epsilon_k}{2}-\frac{\epsilon_{k+1}}{2})\lambda_0\mu_y\) that succeeds with probability at least
	\begin{align}
		&1-\sum_{y\in S_k^{\textrm{ext}}(i)}\exp\{-\bar C(1-\epsilon_{k+1})\lambda_0\mu_y\bar\epsilon^2\bar\Delta^{d/2}\}\nonumber\\
		&\geq1-\sum_{y\in S_k^{\textrm{ext}}(i)}\exp\left\{-\bar C C_{\textrm{mix}}^{d/2}\lambda_0\mu_y\frac{\epsilon^2}{4(k+1)^4}\ell_k^d\right\}\nonumber\\
		&\geq1-(\ell_k+2\eta mnk^a\ell_{k-1})^d\exp\left\{-\bar C C_{\textrm{mix}}^{d/2}\lambda_0 C_M^{-1}\frac{\epsilon^2}{4(k+1)^4}\ell_{k-1}^d\right\}\nonumber\\
		&\geq 1-\tfrac{1}{2}\exp\{-c\psi_k\}\label{eq:cm}
	\end{align}
	where \(c\) is constant independent of \(\ell\), \(k\) and \(\epsilon\), and we used that \(\bar\Delta\geq\beta_{k+1}\geq C_{\textrm{mix}}\ell^2_k\geq C_{\textrm{mix}}\ell_{k-1}^2\). The last inequality holds for large \(k\) by setting \(m\) large, since \(\ell_{k-1}=m^{k-2}((k-1)!)^a\ell\). Similarly, for small \(k\geq 2\) the inequality holds since \(C_M^{-1}\epsilon^2\lambda_0\ell^d\geq\alpha\) is assumed large enough.

	Now, for the case \(k\geq 2\), define a Poisson point process \(\Xi'\) consisting of those particles of \(\Xi\) whose displacement throughout \([\tau\beta_k,(\tau+2)\beta_k]\) is in \(Q_{\eta mnk^a\ell_{k-1}}\). For each particle of \(\Xi\), this condition is satisfied with probability \(\mathbb{P}[F_{2\beta_k}(\eta mnk^a\ell_{k-1})]\), independently over the particles of \(\Xi\). Using \Cref{lemma:exit_times} and the thinning property of Poisson processes, we have that \(\Xi'\) is a Poisson point process with intensity measure
	\[
		\zeta'(y)= (1-\bar\epsilon)(1-\epsilon_{k+1})\mathbb{P}[F_{2\beta_k}(\eta mnk^a\ell_{k-1})]\lambda_0\mu_y,
	\]
	which is greater than
	\begin{align*}
		&\left(1-\frac{\epsilon_k}{2}-\frac{\epsilon_{k+1}}{2}\right)\left(1-c_1\exp\left\{-c_2\frac{(\eta mnk^a\ell_{k-1})^2}{2\beta_k}\right\}\right)\lambda_0\mu_y\\
		&\geq\left(1-\frac{\epsilon_k}{2}-\frac{\epsilon_{k+1}}{2}\right)\left(1-c_1\exp\left\{-c_2\frac{(\eta mnk^a)^2\epsilon^{4/\Theta}}{2C_{\textrm{mix}}k^{8/\Theta}}\right\}\right)\lambda_0\mu_y\\
		&\geq\left(1-\frac{\epsilon_k}{2}-\frac{\epsilon_{k+1}}{2}\right)\left(1-c_1\exp\left\{-c_2\frac{(\eta n)^2k}{2(\beta/\ell^2)}\right\}\right)\lambda_0\mu_y,
	\end{align*}
	where the first inequality follows from the definition of \(\beta_k\) and the second inequality follows from the condition \(2a-8/\Theta>1\) in (\ref{eq:theta_a}) and from \(C_{\textrm{mix}}=\frac{\beta\epsilon^{4/\Theta}m^2}{\ell^2}\), which is obtained by setting \(\beta_1=\beta\) in (\ref{for:beta}). Setting \(m\), and thus \(n\), sufficiently large with respect to \(\beta\), \(\epsilon\), \(\eta\) and the constants \(c_1\) and \(c_2\), we obtain that
	\begin{align*}
		\zeta'(y)&\geq\left(1-\frac{\epsilon_k}{2}-\frac{\epsilon_{k+1}}{2}\right)\left(1-\frac{(\epsilon_k-\epsilon_{k+1})}{4}\right)\lambda_0\mu_y\\
		&\geq\left(1-\frac{3\epsilon_k}{4}-\frac{\epsilon_{k+1}}{4}\right)\lambda_0\mu_y.
	\end{align*}
	Conditioning on the coupling above, we obtain that \(D_k^{\textrm{ext}}(i,\tau)=1\) with probability at least
	\begin{align}
		& 1-\sum_{i':S_{k-1}(i')\subseteq S_k^{\textrm{ext}}(i)}\exp\left\{-\frac{1}{2}\left(\frac{\epsilon_k-\epsilon_{k+1}}{4}\right)^2\left(1-\frac{3\epsilon_k}{4}-\frac{\epsilon_{k+1}}{4}\right)\lambda_0\sum_{y\in S_{k-1}(i')}\mu_y\right\}\nonumber\\
		&\geq 1-\sum_{i':S_{k-1}(i')\subseteq S_k^{\textrm{ext}}(i)}\exp\left\{-\frac{1}{2}\left(\frac{\epsilon^2}{16(k+1)^4}\right)\left(1-\frac{3\epsilon_1}{4}-\frac{\epsilon_{2}}{4}\right)\lambda_0\sum_{y\in S_{k-1}(i')}\mu_y\right\}\nonumber\\
		&\geq 1-(mk^a+2\eta mnk^a)^d\exp\left\{-\frac{1}{2}\left(\frac{\epsilon^2}{16(k+1)^4}\right)\left(1-\frac{15\epsilon}{16}\right)\lambda_0 C_M^{-1}\ell_{k-1}^d\right\}\label{for:P_D_F}\\
		&\geq 1-\tfrac{1}{2}\exp\{-c\psi_k\}\nonumber
	\end{align}
	for some constant \(c\), where in the fist step we applied Chernoff's bound from \Cref{lem:chernoff} with \(\delta=\frac{\epsilon_k-\epsilon_{k+1}}{4}\), using that \((1-\delta)\left(1-\frac{3\epsilon_k}{4}-\frac{\epsilon_{k+1}}{4}\right)\geq1-\epsilon_k\). In the second step, we used that \(\epsilon_k\) is decreasing with \(k\). The last inequality holds using the same argument as the one following (\ref{eq:cm}).
	This and (\ref{eq:cm}) establishes part 2 for \(k\geq 2\).
	
	For part 2 with \(k=1\) we again use the Poisson point process \(\Xi\) of intensity measure
	\[
		\zeta(y)\geq\left(1-\frac{\epsilon_k}{2}-\frac{\epsilon_{k+1}}{2}\right)\lambda_0\mu_y=\left(1-\frac{7\epsilon}{8}\right)\lambda_0\mu_y
	\]
	over \(S_1^{\textrm{ext}}(i)\) as defined above. We also use the fact that \(E(i,\tau)\) is an event restricted to the super cell \(i\) and \(S_1^{\textrm{ext}}(i)\) contains the super cell \(i\) (see \Cref{rem:supercube_in_extended_cube}). Recall that, for the event \(E(i,\tau)\), we only consider the particles of \(\Xi\) whose displacement from time \(\tau\beta\) to \((\tau+\eta)\beta\) is inside \(Q_{w\ell}\). Let the event that this happens for a given particle of \(\Xi\) be denoted by \(F_{\eta\beta}(w\ell)\). Then, we apply \Cref{lemma:exit_times} with \(\Delta=\eta\beta\) and \(z=w\ell\) to obtain
	\[
		\mathbb{P}[F_{\eta\beta}(w\ell)]\geq 1-c_1\exp\left\{-c_2\frac{(w\ell)^2}{\eta\beta}\right\}.
	\]
	Using the fact that \(w^2\ell^2\geq\frac{1}{c_2}\eta\beta\log(8c_1\epsilon^{-1})\), we have that \(\mathbb{P}[F_{\eta\beta}(w\ell)]\geq1-\frac{\epsilon}{8}\). Therefore, using thinning, we have that the particles of \(\Xi\) for which \(F_{\eta\beta}(w\ell)\) hold consist of a Poisson point process with intensity at least \(\left(1-\frac{7\epsilon}{8}\right)\left(1-\frac{\epsilon}{8}\right)\lambda_0\mu_y\geq\left(1-\epsilon\right)\lambda_0\mu_y\). Since \(E(i,\tau)\) is increasing, we have that
	\begin{equation*}
		\mathbb{P}[E(i,\tau)=0\;|\;F\cap\{D_k^{\textrm{base}}(i,\tau)=1\}]\leq 1-\nu_{E_{\textrm{st}}}((1-\epsilon)\lambda,Q_{(2\eta+1)\ell},Q_{w\ell},\eta\beta)\leq e^{-\alpha}.
	\end{equation*}
	
	A similar argument as above can be used to establish part 1 with \(k<\kappa\). For \(k=\kappa\), the argument is simpler as we do not need to carry out the coupling procedure.
\end{proof}

Later, in \Cref{section:proof_of_prop}, we will use \Cref{lemma:failure_prob} to bound the probability that a path \(P\in\Omega_{\kappa,t}^{\textrm{sup}}\) of multi-scale bad cells exists. 
We will use a uniform bound to control the probability that at least one of the space-time cell of scale \(\kappa\) is multi-scale bad. In the converse case, where all scale \(\kappa\) cells are multi-scale good (i.e. not multi-scale bad), we will need to consider paths in \(\Omega_{\kappa-1,t}^{\textrm{sup}}\) and count how many such paths exist. To that end, we now show some bounds that hold for  paths in \(\Omega_{\kappa-1,t}^{\textrm{sup}}\).

\begin{lemma}\label{lemma:failure_prob_total}
	Assume the conditions of \Cref{lemma:failure_prob} are satisfied and let \(P\in\Omega_{\kappa-1,t}^{\textrm{sup}}\) be the support connected \(D\)-path \(((k_1,i_i,\tau_1),\dots,(k_z,i_z,\tau_z))\). Then, with \(\psi_k\) as defined in (\ref{eq:def_psi}) there exists a constant \(c_3\), such that we have
	\[
		\mathbb{P}\left[\textstyle \bigcap_{j=1}^z\{A_{k_j}(i_j,\tau_j)=0\}\right]\leq\exp\left\{-c_3\sum_{j=1}^z\psi_{k_j}\right\}.
	\]
\end{lemma}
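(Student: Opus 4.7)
The plan is to order the cells of \(P\) by start time and then apply the chain rule, bounding each conditional factor via part~2 of \Cref{lemma:failure_prob}. Since every \(k_j\le\kappa-1\), part~2 is applicable at every step, and we can take \(c_3\) to be the constant \(c\) of that lemma. Concretely, I would first reindex the cells of \(P\) so that \(\tau_1\beta_{k_1}\le\tau_2\beta_{k_2}\le\cdots\le\tau_z\beta_{k_z}\), breaking ties arbitrarily. The heart of the argument is then to verify that, under this ordering, for every \(j'<j\) the event \(\{A_{k_{j'}}(i_{j'},\tau_{j'})=0\}\) lies in \(\mathcal{F}_{k_j}(i_j,\tau_j)\).

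To establish this measurability, I would use that the cells of \(P\) are mutually well separated, so (assuming WLOG \(k_j\ge k_{j'}\)) \(R_{k_{j'}}(i_{j'},\tau_{j'})\not\subseteq R_{k_j}^{\textrm{sup}}(i_j,\tau_j)\), which together with \(R_k\subseteq R_k^{\textrm{inf}}\) gives \(R_{k_{j'}}^{\textrm{inf}}(i_{j'},\tau_{j'})\not\subseteq R_{k_j}^{\textrm{sup}}(i_j,\tau_j)\). \Cref{lemma:R_inf_not_R_sup} then yields \(R_{k_{j'}}^{\textrm{inf}}(i_{j'},\tau_{j'})\cap R_{k_j}^{\textrm{inf}}(i_j,\tau_j)=\emptyset\). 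Because \(R_k^{\textrm{inf}}=S_k^{\textrm{inf}}\times T_k^{\textrm{inf}}\) is a product, this forces either \(S_{k_{j'}}^{\textrm{inf}}(i_{j'})\cap S_{k_j}^{\textrm{inf}}(i_j)=\emptyset\) or \(T_{k_{j'}}^{\textrm{inf}}(\tau_{j'})\cap T_{k_j}^{\textrm{inf}}(\tau_j)=\emptyset\). In the spatial case, the second clause in the definition of \(\mathcal{F}_{k_j}(i_j,\tau_j)\) holds, since \(\tau_{j'}\beta_{k_{j'}}\le\tau_j\beta_{k_j}\) by the reindexing. In the temporal case, I would argue that \(T_{k_{j'}}^{\textrm{inf}}(\tau_{j'})\) must lie entirely before \(T_{k_j}^{\textrm{inf}}(\tau_j)\): otherwise one would have \((\tau_j+2)\beta_{k_j}\le\gamma_{k_{j'}}^{(1)}(\tau_{j'})\beta_{k_{j'}+1}\le\tau_{j'}\beta_{k_{j'}}\le\tau_j\beta_{k_j}\), a contradiction. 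Hence \(T_{k_{j'}}^{\textrm{inf}}(\tau_{j'})\) does not meet \([\gamma_{k_j}^{(1)}(\tau_j)\beta_{k_j+1},\infty)\), which is the first clause in the definition of \(\mathcal{F}_{k_j}(i_j,\tau_j)\). Either way, \(\{A_{k_{j'}}(i_{j'},\tau_{j'})=0\}\) is \(\mathcal{F}_{k_j}(i_j,\tau_j)\)-measurable.

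With the measurability in hand, the remainder is a routine chain-rule calculation. Writing \(B_{j-1}=\bigcap_{j'<j}\{A_{k_{j'}}(i_{j'},\tau_{j'})=0\}\), we have \(B_{j-1}\in\mathcal{F}_{k_j}(i_j,\tau_j)\), so part~2 of \Cref{lemma:failure_prob} gives \(\mathbb{P}[A_{k_j}(i_j,\tau_j)=0\mid B_{j-1}]\le\exp\{-c\psi_{k_j}\}\) whenever \(B_{j-1}\) has positive probability (and the claim is trivial otherwise). Multiplying these bounds over \(j=1,\ldots,z\) gives the desired estimate with \(c_3=c\). I expect the main obstacle to be the case analysis establishing measurability, which hinges on exploiting the product structure of \(R_k^{\textrm{inf}}\) together with the start-time ordering; once this is settled, everything reduces to the previously established single-cell bound.
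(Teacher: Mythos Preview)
Your proposal is correct and follows essentially the same approach as the paper: order the cells by start time, apply the chain rule, and use well-separation together with \Cref{lemma:R_inf_not_R_sup} to place each predecessor event in \(\mathcal{F}_{k_j}(i_j,\tau_j)\), then invoke part~2 of \Cref{lemma:failure_prob}. Your case analysis (spatial versus temporal disjointness of the regions of influence) is in fact more explicit than the paper's, which simply asserts measurability after noting \(R_{k_{j'}}^{\textrm{inf}}\cap R_{k_j}^{\textrm{inf}}=\emptyset\) and \(\tau_{j'}\beta_{k_{j'}}\le\tau_j\beta_{k_j}\).
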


\begin{proof}
	We derive the probability that all cells of \(P\) are multi-scale bad. Consider the following order of cells of \(P\). First, take an arbitrary order of \(\mathbb{Z}^d\). Then, we say that \((k_j,i_j,\tau_j)\) precedes \((k_{j'},i_{j'},\tau_{j'})\) in the order if \(\tau_j\beta_{k_j}<\tau_{j'}\beta_{k_{j'}}\) or if both \(\tau_j\beta_{k_j}=\tau_{j'}\beta_{k_{j'}}\) and \(i_j\) precedes \(i_{j'}\) in the order of \(\mathbb{Z}^d\). Then, for any \(j\), we let \(J_j\) be a subset of \(\{1,2,\dots,z\}\) containing all \(j'\) for which \((k_{j'},i_{j'},\tau_{j'})\) precedes \((k_j,i_j,\tau_j)\) in the order. Using this order, we write
	\[
		\mathbb{P}\left[\textstyle\bigcap_{j=1}^z\{A_{k_j}(i_j,\tau_j)=0\}\right]\leq \prod_{j=1}^z\mathbb{P}\left[A_{k_j}(i_j,\tau_j)=0\;|\;\textstyle\bigcap_{j'\in J_j}\{A_{k_{j'}}(i_{k_j'},\tau_{k_j'})=0\}\right].
	\]
	Note that, for each \(j'\in J_j\), we have that \((k_j,i_j,\tau_j)\) and \((k_{j'},i_{j'},\tau_{j'})\) are well separated. Using the definition of well separated cells, we have that \(R_{k_{j'}}^{\textrm{inf}}(i_{j'},\tau_{j'})\not\subseteq R_{k_j}^{\textrm{sup}}(i_j,\tau_j)\) and \(R_{k_{j}}^{\textrm{inf}}(i_{j},\tau_{j})\not\subseteq R_{k_{j'}}^{\textrm{sup}}(i_{j'},\tau_{j'})\). Hence, we obtain by \Cref{lemma:R_inf_not_R_sup} that \(R_{k_{j'}}^{\textrm{inf}}(i_{j'},\tau_{j'})\cap R_{k_{j}}^{\textrm{inf}}(i_{j},\tau_{j})=\emptyset\). By the ordering above, we also have \(\tau_j\beta_{k_j}\geq \tau_{j'}\beta_{k_{j'}}\), which gives that the event \(\bigcap_{j'\in J_j}\{A_{k_{j'}}(i_{k_j'},\tau_{k_j'})=0\}\) is measurable with respect to \(\mathcal{F}_{k_j}(i_j,\tau_j)\). Then, we apply \Cref{lemma:failure_prob} to obtain a positive constant \(c_3\) such that
	\[
		\mathbb{P}\left[\textstyle\bigcap_{j=1}^z\{A_{k_j}(i_j,\tau_j)=0\}\right]\leq\exp\left\{-c_3\sum_{j=1}^z\psi_{k_j}\right\}.
	\]
	\end{proof}

\begin{lemma}\label{lemma:number_of_paths}
	Let \(z\) be a positive integer and \(k_1,k_2,\dots,k_z\geq1\) be fixed. Then, if \(\alpha\) and \(m\) are sufficiently large, the total number of support connected \(D\)-paths containing \(z\) cells of scales \(k_1,k_2,\dots,k_z\) is at most \(\exp\left\{\tfrac{c_3}{2}\sum_{j=1}^z\psi_{k_j}\right\}\), where \(c_3\) is the same constant as in \Cref{lemma:failure_prob_total}, \(\psi\) is defined in (\ref{eq:def_psi}) and \(\alpha\) is defined in \Cref{lemma:failure_prob}.
\end{lemma}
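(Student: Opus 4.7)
The approach is to count support connected $D$-paths greedily: fix the scales $k_1,\dots,k_z$ and bound, for each $j$, the number of possible positions for the $j$-th cell given the $(j-1)$-st. In the intended application (the set $\Omega_{\kappa-1,t}^{\textrm{sup}}$) the first cell is anchored by requiring its extended support to contain a fixed reference region near the origin, so the number of admissible $(k_1,i_1,\tau_1)$ is polynomial in $m$ and $k_1$. The per-step factors will then be absorbed into the target bound $\exp\{\tfrac{c_3}{2}\sum_{j=1}^z\psi_{k_j}\}$ using that $\psi_k$ is made arbitrarily large by taking $\alpha$ (equivalently $\lambda_0\ell^d$) and $m$ sufficiently large.

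For each transition $(k_{j-1},i_{j-1},\tau_{j-1})\to(k_j,i_j,\tau_j)$ I would split according to the two clauses in the definition of a support connected $D$-path. In the \emph{support adjacent} case $R_{k_j}^{\textrm{2sup}}(i_j,\tau_j)$ must intersect $R_{k_{j-1}}^{\textrm{2sup}}(i_{j-1},\tau_{j-1})$, so $(i_j,\tau_j)$ takes at most $p(m,k_{j-1},k_j)$ values, obtained by dividing the volume of a Minkowski dilation of $R_{k_{j-1}}^{\textrm{2sup}}$ by the scale-$k_j$ cell volume $\ell_{k_j}^d\beta_{k_j}$. In the \emph{support connected with diagonals} case I would enumerate: (i) the starting scale-1 cell of the diagonal portion inside $R_{k_{j-1}}^{\textrm{2sup}}(i_{j-1},\tau_{j-1})$ (polynomially many choices), (ii) the base-direction sequence of the diagonal path, using that each diagonal move shifts the base by exactly one $\ell_1$-step and decreases $|h|$ by one, and (iii) the scale-$k_j$ cell whose extended support contains the resulting endpoint ($O(m^{d+2}(k_j+1)^{a(d+2)})$ choices). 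Combined these yield a factor $q(m,k_{j-1},k_j,\ell)$.

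Finally I would absorb $p$ and $q$ into $\exp\{\tfrac{c_3}{2}\psi_{k_j}\}$. For $k_j\ge 2$, $\psi_{k_j}=\epsilon^2\lambda_0\ell^d m^{d(k_j-2)}((k_j-1)!)^{ad}/(k_j+1)^4$ grows super-exponentially with $k_j$ and linearly with $\lambda_0\ell^d$, so its $\tfrac{c_3}{2}$-multiple dominates $\log q$ once $m$ is large. For $k_j=1$, $\psi_1=\alpha$, and taking $\alpha$ large enough ensures $\tfrac{c_3}{2}\alpha\ge \log q$. Multiplying the per-step bounds over $j=1,\dots,z$ then yields the claimed inequality.

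The main obstacle is the diagonal subcase, where a starting scale-1 cell in $R_{k_{j-1}}^{\textrm{2sup}}$ could lie at height of order $\mathrm{poly}(m,k_{j-1})$ in scale-1 units, producing a diagonal path of length $L=\mathrm{poly}(m,k_{j-1})$ and a naive factor $(2d)^L$ for the base-direction choices. The saving is that $L$ remains polynomial in $m$ and $k_{j-1}$ (being bounded by the $\ell_\infty$-diameter of $R_{k_{j-1}}^{\textrm{2sup}}$ in scale-1 units), so $\log q$ stays polynomial in the parameters and is therefore dwarfed by the super-exponential growth of $\psi_{k_j}$ for $k_j\ge 2$, or by $\alpha$ for $k_j=1$, provided $\alpha$ and $m$ are chosen sufficiently large. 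The well-separatedness condition in the definition of a support connected $D$-path plays no role in the counting and is only used in \Cref{lemma:failure_prob_total} for the probability bound.
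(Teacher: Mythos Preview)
Your per-step scheme breaks down in the diagonal case, and the problem is structural rather than a matter of constants.

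First, the claim that the diagonal length $L$ is ``polynomial in $m$ and $k_{j-1}$'' is false: the height extent of $R_{k_{j-1}}^{\textrm{2sup}}$ in scale-$1$ units is $H_{k_{j-1}}=(3m+1)m^{k_{j-1}}((k_{j-1}+1)!)^a$, which is super-exponential in $k_{j-1}$. Worse, $L$ is not even bounded by $H_{k_{j-1}}$: the starting scale-$1$ cell of the diagonal lies in $R_{k_{j-1}}^{\textrm{2sup}}$, but its absolute height depends on where that extended support sits in $\mathbb{Z}^{d+1}$, and hence on the entire history of the path, not just on $k_{j-1}$.

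Second, you absorb the diagonal factor into $\psi_{k_j}$. If $k_{j-1}$ is large while $k_j=1$, then $\psi_{k_j}=\psi_1=\alpha$ is a fixed number, whereas your factor $(2d)^L$ is at least $(2d)^{H_{k_{j-1}}}$, unbounded in $k_{j-1}$. No choice of $\alpha$ can cover this.

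Third, enumerating ``the base-direction sequence of the diagonal path'' is the wrong object. A support connected $D$-path is a sequence of \emph{cells}; two such paths coincide if they list the same cells, regardless of which diagonal witnesses a connection. Only the \emph{endpoint} (relative position) of each diagonal matters, and the number of endpoints at height drop $x$ is $A(x)\le c_d x^d$, not $(2d)^x$.

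The paper uses all three observations through a \emph{global} budget argument: since every diagonal step decreases $|h|$ by one and $|h|$ can increase by at most $H_{k_j}$ inside the $j$-th extended support, the total number of diagonal steps along the whole path is at most $H=\sum_j H_{k_j}$. It then counts, for each distribution $x_1+\cdots+x_{z-1}\le H$ of this budget, the product $\prod_j A(x_j+1)$ of endpoint counts, optimises to get a bound of order $(CH/z)^{2zd}$, and finally uses that $H_k\le\psi_k$ for $m,\alpha$ large to conclude $\log(H/z)\le \tfrac{1}{z}\sum_j\psi_{k_j}$. A purely local, step-by-step accounting cannot reproduce this without reintroducing the height-budget bookkeeping.
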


\begin{proof}
	Recall that for two consecutive cells of a support connected \(D\)-path, they are either support adjacent or the first cell is support connected with diagonals to the second; see the beginning of this section for details. For the remainder of this proof, when a cell \((k,i,\tau)\) of a support connected \(D\)-path \(P\) is support connected with diagonals to the next cell \((k',i',\tau')\) of \(P\), we will refer to the scale \(1\) cells forming the diagonal connection between a cell contained in \(R_k^{\textrm{2sup}}(i,\tau)\) and a cell contained in \(R_{k'}^{\textrm{2sup}}(i',\tau')\) as the \emph{diagonal steps}. Note also that by the definition of \(D\)-paths from \Cref{section:DpathsAndBadClusters}, the first cell of the diagonal steps is diagonally connected to the last cell of the diagonal steps. 
	
	We will prove the result in three steps. 
	We will first show an upper bound for the number of support connected \(D\)-paths with no diagonal steps.
	Next, we will prove a bound for the number of support connected \(D\)-paths where the first and last cell of each sequence of diagonal steps is fixed and show that this bound is directly linked to the bound from the first step. 
	Finally, we will prove an upper bound for the number of all possible arrangements of the first and last cell of the diagonal steps for each diagonal, which will then, when combined with the bound from step two, prove the lemma.
	
	We begin with the first step, by considering the number of possible support connected \(D\)-paths when each cell of the \(D\)-path is support adjacent to the next cell, that is, there are no diagonal steps in the \(D\)-path.
	For any \(k,k'\geq 1\), define
	\[
		\Phi_{k,k'}=\max_{(i_1,\tau_1)\in\mathbb{Z}^{d+1}}|\{(i_2,\tau_2)\in\mathbb{Z}^{d+1}:(k,i_1,\tau_1)\textrm{ is support adjacent to }(k',i_2,\tau_2)\}|,
	\]
	that is, \(\Phi_{k,k'}\) is the maximum number of cells of scale \(k'\) that are support adjacent to a given cell of scale \(k\).
	Let \(\chi_k\) be the number of cells of scale \(k\) whose extended support contains \(R_1(0,0)\). 
	This gives that the total number of different \(D\)-paths of \(z\) cells of scales \(k_1,\dots,k_z\) with no diagonal steps can be bound above by
	\[
		\chi_{k_1}\prod_{j=2}^z\Phi_{k_{j-1},k_j}.
	\]

	 Now we derive a bound for \(\chi_k\). At scale \(k\), the number of cells that have the same extended support is \(\left(\frac{\ell_{k+1}}{\ell_k}\right)^d\frac{\beta_{k+1}}{\beta_k}=m^{d+2}k^{2a-8/\Theta}(k+1)^{8/\Theta+ad}\). Furthermore, the extended support of a cell of scale \(k\) contains exactly \(27(2(3m+1)+1)^d\) different cells of scale \(k+1\). Thus, the number of different extended supports for a cell of scale \(k\) that contains \(R_1(0,0)\) is bounded above by
	\[
		\chi_k\leq 27(2(3m+1)+1)^d m^{d+2}k^{2a-8/\Theta}(k+1)^{8/\Theta+ad}\leq\exp\left\{\frac{c_3}{16}\psi_k\right\},
	\]
	where the last inequality holds since \(m\) and \(\alpha\) are large enough.
	To derive a bound for \(\Phi_{k,k'}\), fix a cell \((k,i_1,\tau_1)\) of scale \(k\). Now, a cell of scale \(k'\) can only be support adjacent to \((k,i_1,\tau_1)\) if it is inside the region
	\begin{equation}\label{eq:double2sup}
		\bigcup_{x\in R_k^{\textrm{2sup}}(i_1,\tau_1)}\left(x+[-(3m+2)\ell_{k'+1},(3m+2)\ell_{k'+1}]^d\times[-14\beta_{k'+1},14\beta_{k'+1}]\right).
	\end{equation}
	For \(k\geq k'\), let \(\overline{\Phi_{k,k'}}\) be the number of cells of scale \(k'\) that lie in the region above. We then have that \(\Phi_{k,k'}\leq\overline{\Phi_{k,k'}}\)
	and
	\begin{align*}
		\overline{\Phi_{k,k'}}&=\left(\frac{(6m+3)\ell_{k+1}+2(3m+2)\ell_{k'+1}}{\ell_{k'}}\right)^d\left(\frac{27\beta_{k+1}+28\beta_{k'+1}}{\beta_{k'}}\right)\\
		&\leq\left((6m+3)m^{k-k'+1}\prod_{i=k'+1}^{k+1}i^a+2(3m+2)m(k'+1)^a\right)^d\\
		&\times \left(27m^{2(k-k'+1)}\prod_{i=k'}^{k}i^{2a-8/\Theta}(i+1)^{8/\Theta}+28m^2k'^{2a-8/\Theta}(k'+1)^{8/\Theta}\right)\\
		&\leq c_4m^{(k-k'+2)d}k^{kad}m^{2(k-k'+1)}k^{2ka}\leq c_4 m^{(d+2)(k-k'+2)}k^{(ad+2a)k},
	\end{align*}
	for some universal positive constant \(c_4\). Note that for any constant \(c>0\), since \(m\) and \(\alpha\) are large enough, it holds that \(c\overline{\Phi_{k,k'}}\leq c\overline{\Phi_{k,1}}\leq\exp\left\{\frac{c_3}{16}\psi_k\right\}\). For \(k<k'\) we set \(\overline{\Phi_{k,k'}}=2^{d+1}\overline{\Phi_{k',k}}\), which gives using (\ref{eq:double2sup})
	that \(\Phi_{k,k'}\leq2^{d+1}\overline{\Phi_{k',k}}\leq \exp\left\{\frac{c_3}{16}\psi_{k'}\right\}\).

	Observe now that 
	\begin{equation}\label{for:phi_phi}
		\prod_{j=2}^z\overline{\Phi_{k_{j-1},k_j}}\leq\prod_{j=2}^z\left(\overline{\Phi_{k_{j-1},k_{j}}}\mathbbm{1}_{k_{j-1}\geq k_{j}}+\mathbbm{1}_{k_{j-1}<k_{j}}\right)\left(\overline{\Phi_{k_{j},k_{j-1}}}\mathbbm{1}_{k_j\geq k_{j-1}}+\mathbbm{1}_{k_j< k_{j-1}}\right).
	\end{equation}
	If write \(k_0=k_{z+1}=\infty\), the right hand side of (\ref{for:phi_phi}) can be written as
	\[
		\prod_{j=1}^z\left(\overline{\Phi_{k_{j},k_{j-1}}}\mathbbm{1}_{k_{j}\geq k_{j-1}}+\mathbbm{1}_{k_{j}<k_{j-1}}\right)\left(\overline{\Phi_{k_{j},k_{j+1}}}\mathbbm{1}_{k_j\geq k_{j+1}}+\mathbbm{1}_{k_j< k_{j+1}}\right).
	\]
	
	Then, applying the bounds above for \(\Phi\) and \(\chi\), we obtain 
	\[
		\chi_{k_1}\prod_{j=2}^z\Phi_{k_{j-1},k_j}\leq\chi_{k_1}\overline{\Phi_{k_1,1}}\prod_{j=2}^z\left(2^{d+1}\overline{\Phi_{k_{j},1}}\right)^2\leq\exp\left\{\frac{c_3}{8}\sum_{j=1}^z\psi_{k_j}\right\}.
	\]
	
	
	We now proceed to the second step.
	By definition, a cell \((k,i,\tau)\) can only be support connected with diagonals to \((k',i',\tau')\) if there exists a cell 
	\((1,i'',\tau'')\) for which \(R_1(i'',\tau'')\subseteq R_{k}^{\textrm{2sup}}(i,\tau)\) that is diagonally connected to a cell \((1,\hat i,\hat\tau)\) for which \(R_1(\hat i,\hat\tau)\subseteq R_{k'}^{\textrm{2sup}}(i',\tau')\). Define \((\hat i-i'',\hat\tau-\tau'')\in\mathbb{Z}^{d+1}\) to be the \emph{relative position} of the cell \((1,\hat i,\hat\tau)\) with respect to the cell \((1,i'',\tau'')\). For convenience we will write when \((k,i,\tau)\) is adjacent to \((k',i',\tau')\) that the relative position of \((1,\hat i,\hat\tau)\) with respect to \((1,i'',\tau'')\) is \((0,0)\). We will show a bound for the number of such relative positions in a \(D\)-path in step three, so we now proceed to show a bound for the number of \(D\)-paths that have fixed relative positions of \((1,\hat i,\hat\tau)\) with respect to \((1,i'',\tau'')\) for all consecutive pairs of cells in the path.
	
	Let \((k,i,\tau)\) be a cell of the support connected \(D\)-path that is support adjacent or support connected with diagonals to the cell \((k',i',\tau')\) and let \((\hat i-i'',\hat\tau-\tau'')\in\mathbb{Z}^{d+1}\) be the relative position, as above.
	Then, for a fixed relative position \((\hat i-i'',\hat\tau-\tau'')\), define 
	\begin{align*}
		\Phi^*_{k,k'}&=\max_{(i_1,\tau_1)\in\mathbb{Z}^{d+1}}|\{(i_2,\tau_2)\in\mathbb{Z}^{d+1}:(k,i_1,\tau_1)\textrm{ is support adjacent or }\\
		&\qquad\qquad\qquad\quad\textrm{support connected with diagonals to }(k',i_2,\tau_2)\\
		&\qquad\qquad\qquad\quad\textrm{with fixed relative position }(\hat i-i'',\hat\tau-\tau'')\}|.
	\end{align*}

	Then the number of \(D\)-paths containing \(z\) cells of scales \(k_1,k_2,\dots,k_z\) where consecutive cells are support adjacent or support connected with diagonals with fixed relative positions is smaller than 
	\begin{equation}\label{for:phistar}
		\chi_{k_1}\prod_{j=2}^z\Phi^*_{k_{j-1},k_j}.
	\end{equation}
	
	Since a cell of a support connected \(D\)-path can either be support adjacent or support connected with diagonals to the next cell, we will consider the two cases individually.
	Consider first the case when the two cells are support adjacent, i.e. there are no diagonal steps between the extended supports of \((k,i,\tau)\) and \((k',i',\tau')\). By step 1 of this proof, we have that in this case
	\[
		\Phi^*_{k,k'}\leq\overline{\Phi_{k,k'}}.
	\]
	
	Let now the relative position of \((1,\hat i,\hat\tau)\) with respect to \((1,i'',\tau'')\) be different from \((0,0)\).
	Then, since the relative position is fixed, \(\Phi^*_{k,k'}\) can be bound by the product of the number of cells of scale 1 contained in the extended support of a cell of scale \(k\) and the number of cells of scale 1 that are contained in the extended support of a cell of scale \(k'\).
	Using the bounds from step 1, this gives that 
	\[
		\Phi^*_{k,k'}\leq\overline{\Phi_{k,1}}\cdot\overline{\Phi_{k',1}}.
	\]
	We have therefore for any fixed relative position of \((1,\hat i,\hat\tau)\) with respect to \((1,i'',\tau'')\) that 
	\begin{equation}\label{for:DpathsWithFixedDiagonalsSingle}
		\Phi^*_{k,k'}\leq\overline{\Phi_{k,k'}}\mathbbm{1}_{(\hat i,\hat\tau)=(i'',\tau'')}+\overline{\Phi_{k,1}}\cdot\overline{\Phi_{k',1}}\mathbbm{1}_{(\hat i,\hat\tau)\neq(i'',\tau'')}.
	\end{equation}
	By using the bounds from step 1 and (\ref{for:DpathsWithFixedDiagonalsSingle}), we get that
	\begin{equation}\label{for:DpathsWithFixedDiagonals}
		\chi_{k_1}\prod_{j=2}^z\Phi^*_{k_{j-1},k_j}\leq \exp\left\{\frac{3c_3}{8}\sum_{j=1}^z\psi_{k_j}\right\}.
	\end{equation}
	

	We now move on to the third step and show a bound for the number of different relative positions that are possible in a
	support connected \(D\)-path of cells of scales \(k_1,k_2,\dots,k_z\). We will show that this number is smaller than
	\begin{equation}\label{for:DiagonalStepsOnly}
		\exp\left\{\frac{c_3}{8}\sum_{j=1}^z\psi_{k_j}\right\},
	\end{equation}
	which combined with (\ref{for:DpathsWithFixedDiagonals}) proves the lemma.
	
	Consider two consecutive cells of the \(D\)-path and let \((1,i,\tau)\) be a cell contained in the extended support of the first cell that is diagonally connected to a cell \((1,i',\tau')\) that is contained in the extended support of the second cell.
	Recall from \Cref{section:statement} the definition of the base-height index and from \Cref{section:fractal} the properties of the sequence of cells that make \((1,i,\tau)\) diagonally connected to \((1,i',\tau')\).
	Denote by \(x\) the height difference between the two cells, i.e. \(x:= |h-h'|\) in the base-height index,
	and define \(A(x), x\in \mathbb{Z}\), to be the number of different cells of scale \(1\) that \((1,i,\tau)\) is diagonally connected to with height difference \(x\). More precisely,
	\begin{align*}
		A(x)&=\max_{(b_1,h_1)\in\mathbb{Z}^{d+1}}|\{(b_2,h_2)\in\mathbb{Z}^{d+1}:|h_2-h_1|=x\textrm{ and}\\
		&\qquad\qquad\qquad\textrm{ \((b_1,h_1)\) is diagonally connected to \((b_2,h_2)\)}\}|.
	\end{align*}
	
	Let \(H_k\) be the side length of the cube \(S_k^{\textrm{2sup}}(i)\) divided by the side length of the cube \(S_1(i')\), that is, let \(H_k=(3m+1)m^k((k+1)!)^a\).
	Recall from
	\Cref{section:fractal} that using the base-height index, for any two cells \((b_i,h_i),(b_j,h_j)\) of the diagonal, \(h_ih_j\geq 0\) and that
	\(h_{j-1}-h_{j}\in\operatorname{Sign}(h_{j-1})\) for any two consecutive cells of the diagonal.
	Therefore, given the \(z\) cells of scales \(k_1,k_2,\dots,k_z\), the maximum number of scale \(1\) diagonal steps contained in all diagonal connections between the cells of the path is at most
	\[
		H:=\sum_{i=1}^{z-1} H_{k_i}.
	\]
	Letting \(x_i\), for \(i\in\{1,2,\dots,z-1\}\) be the height difference between the \(i\)-th and \((i+1)\)-th cell of the path, with \(x_i=0\) if the cells are support adjacent, we have that the number of possible configurations of the diagonal steps is at most

	\begin{equation}\label{for:sum_vol}
		\sum_{y=0}^H\sum_{\substack{x_1,x_2,\dots,x_{z-1}:\\x_1+\dots+x_{z-1}=y}}A(x_1+1)A(x_2+1)\cdots A(x_{z-1}+1).
	\end{equation}
	
	See \Cref{fig:Dpaths} for an illustration of one such configuration. The \(+1\) terms in (\ref{for:sum_vol}) account for the fact that each diagonal either ends in a multi-scale bad cell or is adjacent to one, so by increasing the height difference by 1, we account for both possibilities at once.
	
	\begin{figure}[hbt]
  		\includegraphics[width=1 \linewidth]{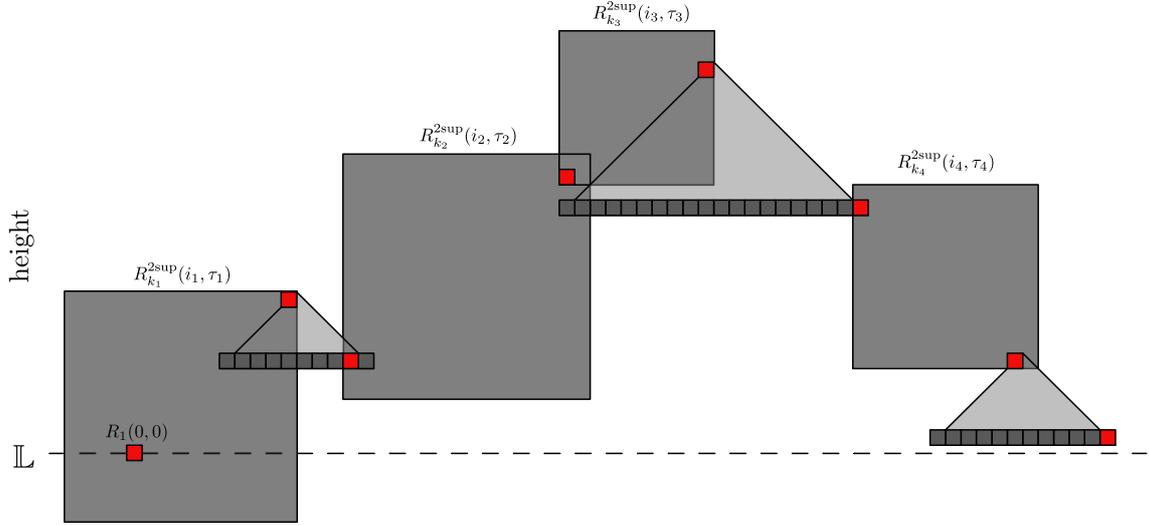}
  		\caption{Example with \(z=4\) where \(x_2=0\) and \(x_1+x_2+x_3+x_4\leq H\). The red cells are the scale \(1\) cells used as the (fixed) beginnings and ends of the diagonals within their respective extended supports. The dark cells with the matching red cell at the bottom of the triangles represent the area containing \(A(x_i)\) cells of scale 1.}
  		\label{fig:Dpaths}
	\end{figure}

	Due to the properties of the diagonal steps, we have that \(A(x)\) is the volume of a \(d\)-dimensional ball of radius \(x\), so \(A(x)\leq c_dx^d\) where \(c_d\) is some constant that depends on \(d\) only. It follows, e.g. by the method of Lagrange multipliers, that for \(x_k\geq 0, k\in \{1,\dots,{z-1}\}\) and \(x_1+\dots+x_{z-1}=y\), we have
	\[
		A(x_1+1)A(x_2+1)\cdots A(x_{z-1}+1)\leq \left[A\left(\tfrac{y}{z-1}+1\right)\right]^{z-1}.
	\]

	Next, using the above bound and 
	\[
		\sum_{\substack{x_1,x_2,\dots,x_{z-1}:\\x_1+\dots+x_{z-1}=y}}1={z+y-2\choose z-2},
	\]
	we have that the sum in (\ref{for:sum_vol}) is smaller than
	\[
		\sum_{y=0}^H{z+y-1\choose z-1}A\left(\tfrac{y}{z-1}+1\right)^{z-1}\leq{z+H\choose z}A\left(\tfrac{H}{z-1}+1\right)^{z-1},
	\]
	where the binomial inequality used can easily be proven by induction on \(H\) (using Pascal's rule).

	Then, for some positive constants \(C\) and \(C_2\) and using that \(\frac{H}{z}\) is large for large \(\alpha\), we have that
	\begin{align}
		{z+H\choose z}A\left(\tfrac{H}{z-1}+1\right)^{z-1}&\leq C\frac{(z+H)^{z}}{(z)!}\left(\frac{H}{z-1}+1\right)^{(z-1)d}\nonumber\\
		&\leq C \frac{(z+H)^{z}}{(z/3)^{z}}\left(\frac{2H}{z}\right)^{zd}\nonumber\\
		&\leq C(3+3H/z)^{z} \left(\frac{2H}{z}\right)^{zd}\nonumber\\
		&\leq C\left(C_2\frac{H}{z}\right)^{2zd}.\nonumber
	\end{align}
	In order to complete the proof, it remains to show that \(C\left(C_2\frac{H}{z}\right)^{2zd}\leq\exp\left\{\frac{c_3}{8}\sum_{j=1}^z\psi_{k_j}\right\}\), which is equivalent to showing that
	\begin{equation}\label{for:1overalpha}
		\tilde C\log\left(\frac{H}{z}\right)\leq\frac{1}{z}\sum_{j=1}^z\psi_{k_j},
	\end{equation}
	where \(\tilde C\) is some constant.
	For small \(k\), setting \(\alpha\) (and thus \(\ell\)) large enough gives that \(H_k\leq \psi_k\), and similarly setting \(m\) large gives \(H_k\leq\psi_k\) for all large \(k\).
		Combined, this gives (\ref{for:1overalpha}).
\end{proof}
\vspace{0.11cm}
\begin{remark}\label{rem:height2}
	As mentioned in \Cref{section:statement} (see \Cref{rem:height1}), one can set time to be height in the base-height index. 
	In that case all results up to and including \Cref{lemma:failure_prob_total} go through unchanged. 
	However, an important issue arises in \Cref{lemma:number_of_paths}.
	In the proof of \Cref{lemma:number_of_paths}, the height \(H_k\) of the extended support of a cell becomes the length of the interval \(T_k^{\textrm{2sup}}(\tau)\). Then, if \(d\geq 3\), the proof goes through unchanged since it still holds that \(H_k\leq\psi_k\) for all \(k\geq 1\) by setting \(m\) and \(\alpha\) large enough. For \(d=2\) however the lemma no longer holds, since it can happen that the number of different arrangements of diagonal steps and the \(z\) cells of a path is larger than \(\exp\{\sum_{i=1}^z\psi_{k_i}\}\).
	To see this, consider the following example. 
	Let \(k_1\) be large and let \(k_i=2\) for all \(i\in\{2,\dots,z\}\). 
	Let \(z\) be the largest integer for which it holds that \(\psi_{k_1}\geq 4\sum_{i=2}^z\psi_{k_i}=4(z-1)\psi_2\). Note that this gives that
	\begin{equation}\label{for:counterexample1}
		\frac{\psi_{k_1}}{4\psi_2}\leq z\leq 1+\frac{\psi_{k_1}}{4\psi_2}\leq \frac{\psi_{k_1}}{3\psi_2},
	\end{equation}
	where the last inequality holds for any large enough $k_1$.
	Furthermore note that since \(d=2\) we can write \(H_{k_1}=a_{k_1}\psi_{k_1}\), where \(a_{k_1}\) is a term that can be made arbitrarily large by increasing \(k_1\).
	Next, observe that the number of different arrangements of diagonal steps for the cells of scale 2 is at least \(H_{k_1}+z-2\choose z-2\). Therefore, we want to show that for any constant $c_1>0$, 
	we can set \(k_1\) large enough to have 
	\begin{equation}\label{for:counterexample2}
		{H_{k_1}+z-2\choose z-2}\geq \exp\left\{\sum_{i=1}^z c_1 \psi_{k_i}\right\}.
	\end{equation}
	Consider first the left hand side of (\ref{for:counterexample2}) and note that it is bigger than 
	\[
	   \frac{H_{k_1}^{z-2}}{(z-2)!}
	   \geq \left(\frac{H_{k_1}}{z-2}\right)^{z-2}
	   \geq \left(3a_{k_1}\psi_2\right)^{z-2},
	\] 
	where in the last inequality we used the upper bound on $z$ from (\ref{for:counterexample1}).
	For the right hand side of (\ref{for:counterexample2}), we have 
	\[
		\exp\left\{\sum_{i=1}^z c_1 \psi_{k_i}\right\} 
		= \exp\left\{c_1(\psi_{k_1}+(z-1)\psi_2)\right\} 
		\leq \exp\left\{4c_1 \psi_2 z + c_1\psi_2 (z-1)\right\},
	\]
	where the inequality follows from the upper bound on $\psi_{k_1}$ obtained from the leftmost inequality in (\ref{for:counterexample1}).
	Since $a_{k_1}$ grows with $k_1$, we obtain (\ref{for:counterexample2}) for large enough $k_1$. 
	\end{remark}

For any support connected \(D\)-path \(P=((k_1,i_1,\tau_1),(k_2,i_2,\tau_2),\dots,(k_z,i_z,\tau_z))\in\Omega_{\kappa-1,t}^{\textrm{sup}}\), we define the weight of \(P\) as \(\sum_{j=1}^{z}\psi_{k_j}\). The lemma below shows that, for any \(P\in \Omega_{\kappa-1,t}^{\textrm{sup}}\), if \(t\) is large enough, then the weight of \(P\) must be large.

\begin{lemma}\label{lemma:weight_of_path}
	Let \(t>0\) and let \(P=((k_1,i_1,\tau_1),(k_2,i_2,\tau_2),\dots,(k_z,i_z,\tau_z))\) be a path in \(\Omega_{\kappa-1,t}^{\textrm{sup}}\). If \(\alpha\) is sufficiently large and \(\kappa=\mathcal{O}(\log t)\), then there exist a positive constant \(c=c(C_M)\) and a value \(C\) independent of \(t\) such that
	\begin{align}
		\sum_{j=1}^z\psi_{k_j}\geq\begin{cases}
			C\frac{\sqrt{t}}{(\log t)^c},&\textrm{for }d=1,\\
			C\frac{t}{(\log t)^c},&\textrm{for }d=2,\\
			Ct,&\textrm{for }d\geq 3.
		\end{cases}
	\end{align}
\end{lemma}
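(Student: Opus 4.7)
The plan is to convert the requirement that $P$ reach from $R_1(0,0)$ to outside $[-t,t]^d\times[-t,t]$ into a distance constraint on $\sum_j\ell_{k_j+1}$ or $\sum_j\beta_{k_j+1}$, and then turn that distance constraint into the desired lower bound on $\sum_j\psi_{k_j}$ via the ratios $\psi_k/\ell_{k+1}$ and $\psi_k/\beta_{k+1}$. Since the extended support of a scale-$k$ cell has spatial side length $O(\ell_{k+1})$ and temporal length $O(\beta_{k+1})$, and consecutive cells in $P$ are either support adjacent (their extended supports intersect) or linked by scale-$1$ diagonals of length bounded by $O(\max(\ell_{k_j+1},\ell_{k_{j+1}+1}))$ as in the proof of \Cref{lemma:number_of_paths}, the union of all extended supports and diagonals is a connected subset of $\mathbb{Z}^{d+1}$ reaching from the origin to distance $\geq t$ in some coordinate. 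Consequently either $\sum_{j=1}^z\ell_{k_j+1}\gtrsim t$ (spatial escape) or $\sum_{j=1}^z\beta_{k_j+1}\gtrsim t$ (temporal escape).

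In either case, writing $D_k$ for the chosen $\ell_{k+1}$ or $\beta_{k+1}$,
\[
\sum_{j=1}^z\psi_{k_j}\;\geq\;\Bigl(\min_{1\leq j\leq z}\tfrac{\psi_{k_j}}{D_{k_j}}\Bigr)\sum_{j=1}^z D_{k_j}\;\gtrsim\;t\cdot\min_{1\leq k\leq \kappa}\tfrac{\psi_k}{D_k}.
\]
Using the explicit formulas $\psi_k\asymp\lambda_0\ell_{k-1}^d/(k+1)^4$, $\ell_k\asymp mk^a\ell_{k-1}$, and $\beta_k\asymp\ell_{k-1}^2 k^{8/\Theta}/\epsilon^{4/\Theta}$, a direct calculation gives, for $k\geq 2$,
\[
\tfrac{\psi_k}{\ell_{k+1}}\;\asymp\;\tfrac{\lambda_0\,\ell_{k-1}^{d-1}}{m^2 k^a(k+1)^{a+4}},\qquad \tfrac{\psi_k}{\beta_{k+1}}\;\asymp\;\tfrac{\lambda_0\,\epsilon^{4/\Theta}\,\ell_{k-1}^{d-2}}{m^2 k^{2a}(k+1)^{4+8/\Theta}}.
\]
For $d\geq 3$ both ratios are uniformly bounded below by positive constants (the minimum is essentially at $k=2$, where $\ell_{k-1}=\ell$ is independent of $t$), giving $\sum\psi_{k_j}\geq Ct$. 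For $d=2$ the spatial ratio is again a positive constant while the temporal ratio is $\gtrsim 1/(k+1)^{c'}$; since $k\leq\kappa=O(\log t)$, the temporal minimum is $\gtrsim 1/(\log t)^{c'}$, yielding $Ct/(\log t)^c$. For $d=1$ the spatial ratio is $\gtrsim 1/(k+1)^{c'}$, producing $Ct/(\log t)^c$ in the spatial case.

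The principal obstacle is the temporal case for $d=1$, where $\psi_k/\beta_{k+1}\asymp 1/(\ell_{k-1}\,\mathrm{poly}(k))$ deteriorates with $k$ and the naive ratio bound becomes useless. I would treat it by separating according to whether the path uses only scales with $\beta_{k+1}\leq t$ or includes some scale with $\beta_{k+1}>t$. In the former case, for the scale $k^\ast$ realizing the minimum ratio we have $z\geq t/\beta_{k^\ast+1}$, so $\sum\psi_{k_j}\gtrsim t\psi_{k^\ast}/\beta_{k^\ast+1}\gtrsim t/(\ell_{k^\ast-1}\,\mathrm{poly}(k^\ast))$; combining with $\beta_{k^\ast+1}\asymp\ell_{k^\ast-1}^2\,\mathrm{poly}(k^\ast)\leq t$ forces $\ell_{k^\ast-1}\leq\sqrt{t}/\mathrm{poly}(k^\ast)$, whence $\gtrsim\sqrt{t}/(\log t)^c$. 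In the latter case, a single such cell already contributes $\psi_{k^\ast}\gtrsim\ell_{k^\ast-1}/\mathrm{poly}(k^\ast)\gtrsim\sqrt{t}/(\log t)^c$ by the same $\beta\asymp\ell^2$ relation. Taking the minimum of spatial and temporal bounds across the three dimension regimes yields the stated inequality.
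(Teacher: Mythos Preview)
Your argument is correct and reaches the same bounds, but the route differs from the paper's in two respects. First, the paper does not split into spatial versus temporal escape: it works with a single quantity $\Delta_k^{2\textrm{sup}}$, the diameter of the extended support of a scale-$k$ cell, bounds it above by $c_6 m^2(k+1)^{2a}\ell_k^2$ (the temporal side dominates), and then shows directly that $\psi_k\gtrsim \Delta_k^{2\textrm{sup}}/\mathrm{poly}(k)$ for $d\geq2$ and $\psi_k\gtrsim \sqrt{\Delta_k^{2\textrm{sup}}}/\mathrm{poly}(k)$ for $d=1$. The geometric input is then just $\sum_j \Delta_{k_j}^{2\textrm{sup}}\geq t/2$, which absorbs both the extended supports and the diagonals at once. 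Second, and more notably, for $d=1$ the paper avoids your case split on $\beta_{k^\ast+1}\lessgtr t$ entirely: from $\psi_{k_j}\gtrsim \sqrt{\Delta_{k_j}^{2\textrm{sup}}}/(\log t)^c$ one gets $\sum_j\psi_{k_j}\gtrsim (\log t)^{-c}\sum_j\sqrt{\Delta_{k_j}^{2\textrm{sup}}}\geq (\log t)^{-c}\sqrt{\sum_j\Delta_{k_j}^{2\textrm{sup}}}\geq (\log t)^{-c}\sqrt{t/2}$, using only the elementary subadditivity $\sqrt{a}+\sqrt{b}\geq\sqrt{a+b}$. Your approach buys a slightly sharper observation (that the $\sqrt{t}$ loss in $d=1$ comes purely from temporal escape), at the cost of the extra casework; the paper's approach is shorter and treats all dimensions uniformly until the very last step.
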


\begin{proof}
	Let \(\Delta_{k}^{2\textrm{sup}}\) denote the diameter of the extended support of a cell of scale \(k\). Then, we have
	\[
		\Delta_{k}^{2\textrm{sup}}\leq (6m+3)\ell_{k+1}\sqrt{d}+27\beta_{k+1}=(6m+3)m(k+1)^a\ell_k\sqrt{d}+27C_{\textrm{mix}}\frac{\ell_k^2(k+1)^{8/\Theta}}{\epsilon^{4/\Theta}}.
	\]
	Then, the definition of \(C_{\textrm{mix}}\) gives us that there exists a constant \(c_6\) (that might depend on the ratio \(\beta/\ell^2\)) such that
	\[
		\Delta_{k}^{2\textrm{sup}}\leq(6m+3)m(k+1)^a\ell_k\sqrt{d}+27m^2\frac{\beta}{\ell^2}\ell_k^2(k+1)^{8/\Theta}\leq c_6 m^2(k+1)^{2a}\ell_k^2.
	\]
	Then, for \(k\geq 2\), we have for \(d=1\) that
	\begin{align*}
		\psi_k&=\frac{\epsilon^2\lambda_0\ell_{k-1}}{(k+1)^4}=\frac{\epsilon^2\lambda_0}{(k+1)^4}\left(\frac{\ell_k}{m(k^a)}\right)\\
			&\geq\frac{\epsilon^2\lambda_0}{m(k+1)^{a+4}}\left(\frac{\sqrt{c_6m^2(k+1)^{2a}\ell_k^2}}{\sqrt{c_6m^2(k+1)^{2a}}}\right)\\
			&\geq\frac{\epsilon^2\lambda_0}{\sqrt{c_6}m^2(k+1)^{3a+4}}\sqrt{\Delta_{k}^{2\textrm{sup}}}
	\end{align*}
	and for \(d\geq 2\) that
	\begin{align*}
		\psi_k&=
			\frac{\epsilon^2\lambda_0\ell_{k-1}^{d-2}}{(k+1)^4}\left(\frac{\ell_k}{mk^a}\right)^2\\
			&\geq\frac{\epsilon^2\lambda_0\ell_{k-1}^{d-2}}{m^2(k+1)^{2a+4}}\left(\frac{c_6m^2(k+1)^{2a}\ell_k^2}{c_6m^2(k+1)^{2a}}\right)\\
			&\geq\frac{\epsilon^2\lambda_0\ell_{k-1}^{d-2}}{c_6m^4(k+1)^{4a+4}}\Delta_{k}^{2\textrm{sup}}.
	\end{align*}
	Now, since \(\kappa=\mathcal{O}(\log t)\), there exists a constant \(c_7\) such that \((k+1)^b\leq c_7(\log t)^b\) for all \(k\leq \kappa\) and any \(b\geq 1\). We use this for dimensions 1 and 2. For dimension 3 and higher, we set \(c_7\) large enough to satisfy \(\frac{\ell_{k-1}^{d-2}}{(k+1)^{4a+4}}\geq\frac{\ell^{d-2}}{mc_7}\); this is possible since \(\ell_k\) is of order \((k!)^a\). This gives 
	\[
		\psi_k\geq\begin{cases}
			\frac{\epsilon^2\lambda_0}{\sqrt{c_6}c_7m^2}\frac{\sqrt{\Delta_{k}^{2\textrm{sup}}}}{(\log t)^{3a+4}},&\textrm{for }d=1\\
			\frac{\epsilon^2\lambda_0}{c_6c_7m^4}\frac{\Delta_{k}^{2\textrm{sup}}}{(\log t)^{4a+4}},&\textrm{for }d=2\\
			\frac{\epsilon^2\lambda_0\ell^{d-2}}{c_6c_7m^5}\Delta_{k}^{2\textrm{sup}},&\textrm{for }d\geq 3.
		\end{cases}
	\]
	For \(k=1\) we write \(\psi_1\geq c\sqrt{\Delta_{k}^{2\textrm{sup}}}\) for \(d=1\) and \(\psi_1\geq c\Delta_{k}^{2\textrm{sup}}\) for \(d\geq 2\), where \(c\) is some positive value that may depend on \(\epsilon\), \(m\), \(\lambda_0\), \(\ell\) and \(\nu_E\). Moreover, if a support connected \(D\)-path is such that \(\sum_{j=1}^z\Delta_{k_j}^{2\textrm{sup}}<t/2\), the extended support of all cells of the path must be contained in \([-t,t]^{d+1}\). This is true because if there are no diagonal steps in \(P\), then the extended supports are contained in \([-t/2,t/2]^{d+1}\), and if there are diagonal steps, they can only prolong the path by at most \(\sum_{j=1}^z\Delta_{k_j}^{\textrm{2sup}}\).
	Therefore, for \(P\in\Omega_{\kappa-1,t}^{\textrm{sup}}\) we have \(\sum_{j=1}^z\Delta_{k_j}^{2\textrm{sup}}\geq t/2\). This implies that there exists a positive \(C\) independent of \(t\), but depending on everything else such that
	\[
		\sum_{j=1}^z\psi_{k_j}\geq\begin{cases}
			 C\frac{\sqrt{t}}{(\log t)^{3a+4}},&\textrm{for }d=1\\
			C\frac{t}{(\log t)^{4a+4}},&\textrm{for }d=2\\
			Ct,&\textrm{for }d\geq 3.
		\end{cases}
	\]
\end{proof}

We now write \(\psi_k\), \(k\geq 2\) as a multiple of \(\psi_2\). This will be used to count the number of paths in \(\Omega_{\kappa,t}^{\textrm{sup}}\) later. For this,
set \(\tilde\psi_2=\psi_2=3^{-4}\epsilon^2\lambda_0\ell^d\), and for \(j\geq 3\), define
\[
	\tilde\psi_j=2\tilde\psi_2m^{(j-2)d}((j-1)!)^{ad-3}((j-2)!)^2(j-3)!.
\]

\begin{lemma}\label{lemma:psi_tilde}
For all \(j\geq 2\), it holds that \(\tilde \psi_j\leq \psi_j\leq 41\tilde\psi_j\).
\end{lemma}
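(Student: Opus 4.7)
The strategy is pure algebraic bookkeeping: form the ratio $\psi_j/\tilde\psi_j$ explicitly, simplify using the factorial identities $(j-1)! = (j-1)(j-2)!$ and $(j-2)! = (j-2)(j-3)!$, and then verify that the resulting rational expression in $j$ lies in the interval $[1,41]$ for every $j\geq 3$. The case $j=2$ is immediate from the definition $\tilde\psi_2 = \psi_2$.

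First I would substitute the formulas. From (\ref{eq:def_psi}) and $\ell_{j-1} = m^{j-2}((j-1)!)^a\ell$ we have
\[
    \psi_j \;=\; \frac{\epsilon^2\lambda_0 \ell^d m^{d(j-2)}((j-1)!)^{ad}}{(j+1)^4}
    \;=\; \tilde\psi_2\cdot\frac{3^4 m^{d(j-2)}((j-1)!)^{ad}}{(j+1)^4},
\]
using $\tilde\psi_2 = \psi_2 = \epsilon^2\lambda_0\ell^d/3^4$. Dividing by the definition of $\tilde\psi_j$ and cancelling the common factors $\tilde\psi_2 m^{d(j-2)}((j-1)!)^{ad-3}$, I obtain
\[
    \frac{\psi_j}{\tilde\psi_j} \;=\; \frac{3^4 ((j-1)!)^3}{2(j+1)^4 ((j-2)!)^2(j-3)!}.
\]
Using $((j-1)!)^3 = (j-1)^3((j-2)!)^3$ and $(j-2)! = (j-2)(j-3)!$ reduces this to the clean expression
\[
    \frac{\psi_j}{\tilde\psi_j} \;=\; \frac{81\,(j-1)^3(j-2)}{2\,(j+1)^4}.
\]

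Next I would verify the two inequalities by analyzing this rational function of $j$. For the lower bound, at $j=3$ the ratio equals $81\cdot 8 \cdot 1/(2\cdot 256) = 81/64 > 1$; I would then check (either by computing $f(j+1)/f(j)$ with $f(j) = (j-1)^3(j-2)/(j+1)^4$ and showing this ratio exceeds $1$ for all $j\geq 3$, or by a direct derivative argument) that $f$ is increasing in $j$, so $\psi_j/\tilde\psi_j \geq 81/64 \geq 1$ for all $j\geq 3$. For the upper bound, since $(j-1)^3(j-2) \leq (j+1)^4$ for $j\geq 3$ (both sides are polynomials of degree $4$ with leading coefficient $1$; expanding shows the difference is positive for $j\geq 3$), the ratio is bounded above by $81/2 = 40.5 < 41$.

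The proof is essentially routine and there is no real obstacle beyond keeping track of the exponents in the telescoping of factorials. The only mildly delicate step is justifying the monotonicity (or at least the uniform bound $(j-1)^3(j-2) \leq (j+1)^4$) over all $j\geq 3$, which can be done either by a short induction or by expanding both polynomials and comparing coefficients term by term; I would choose whichever is shorter to present.
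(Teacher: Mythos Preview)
Your proposal is correct and follows essentially the same route as the paper: both arguments reduce $\psi_j/\tilde\psi_j$ to $\tfrac{81}{2}\cdot\tfrac{(j-1)^3(j-2)}{(j+1)^4}$ via the factorial identities, then bound this rational function above by $81/2<41$ (using $(j-1)^3(j-2)\leq(j+1)^4$) and below by $1$ (the paper simply asserts $\tfrac{(j-1)^3(j-2)}{(j+1)^4}\geq 1/32$, which is exactly your value at $j=3$, and your monotonicity observation is what makes this the minimum).
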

\begin{proof}
	For \(j\geq 3\) we write
	\begin{align*}
		\psi_j&=\frac{\epsilon^2\lambda_0\ell^dm^{(j-2)d}((j-1)!)^{ad}}{(j+1)^4}=3^4\tilde\psi_2\frac{m^{(j-2)d}((j-1)!)^{ad}}{(j+1)^4}\\
		&=3^4\tilde\psi_2m^{(j-2)d}((j-1)!)^{ad-3}((j-2)!)^2(j-3)!\left(\frac{(j-1)^3(j-2)}{(j+1)^4}\right).
	\end{align*}
	This implies that \(\psi_j\leq \frac{3^4}{2}\tilde\psi_j\leq 41\tilde\psi_j\). The other direction follows from the fact that \(\frac{(j-1)^3(j-2)}{(j+1)^4}\geq 1/32\) for all \(j\geq 3\).
\end{proof}

\section{Size of bad clusters}\label{section:proof_of_prop}

For \(k\geq 1\), define \(\mathcal{S}^t_k\) to be the set of indices \(i\in\mathbb{Z}^d\) given by
\[
	\mathcal{S}^t_k=\left\{i\in\mathbb{Z}^d:\;S_k(i)\textrm{ intersects }[-t,t]^d\right\}.
\]
Similarly, we define \(\mathcal{T}^t_k\) as the set of indices \(\tau\) for time intervals that have a descendent at scale 1 intersecting \([0,t]\). Formally, let
\[
	\mathcal{T}^t_k=\left\{\tau\in\mathbb{Z}:\exists \tau'\textrm{ s.t. }\gamma_1^{(k-1)}(\tau')=\tau \textrm{ and }T_1(\tau')\cap[0,t]\neq\emptyset\right\}.
\]
Note that an interval in \(\mathcal{T}^t_k\) with \(k\geq 2\) may not intersect \([0,t]\). Using these definitions define
\[
	\mathcal{R}^t_k=\mathcal{S}^t_k\times\mathcal{T}^t_k.
\]
For the following proposition, recall from \Cref{section:DpathsAndBadClusters} the definitions of \(K(0,0)\) and \(K'(0,0)\).

\begin{prop}\label{prop:exp_tail}
	For each \((i,\tau)\in\mathbb{Z}^{d+1}\), let \(E_{\textrm{st}}(i,\tau)\) be an increasing event that is restricted to the super cube \(i\) and the super interval \(\tau\), and let \(\nu_{E_{\textrm{st}}}\) be the probability associated to \(E_{\textrm{st}}\) as defined in \Cref{def:probassoc}. Fix a constant \(\epsilon\in(0,1)\), and integer \(\eta\geq1\) and the ratio \(\beta/\ell^2>0\). Fix also \(w\) such that
	\[
		w\geq\sqrt{\frac{\eta\beta}{c_2\ell^2}\log\left(\frac{8c_1}{\epsilon}\right)},
	\]
	for some constants \(c_1\) and \(c_2\) which depend on the graph. Then, there exist constants \(c\) and \(C\), and positive numbers \(\alpha_0\) and \(t_0\) that depend on \(\epsilon\), \(\eta\), \(w\) and the ratio \(\beta/\ell^2\) such that if
	\[
	\alpha=\min\left\{C_{M}^{-1}\epsilon^2\lambda_0\ell^d,\log\left(\frac{1}{1-\nu_{E_{\textrm{st}}}((1-\epsilon)\lambda,Q_{(2\eta+1)\ell},Q_{w\ell},\eta\beta)}\right)\right\}\geq\alpha_0,
	\]
	we have for all \(t\geq t_0\) that
	\[
	\mathbb{P}\left[K(0,0)\not\subseteq\mathcal{R}^t_1\right]\leq\left\{
	\begin{array}{ll}
		\exp\left\{-C\lambda_0\frac{t}{(\log t)^c}\right\}&\textrm{for }d=2\\
		\exp\left\{-C\lambda_0 t\right\}&\textrm{for }d\geq 3.
	\end{array}
	\right.
	\]
\end{prop}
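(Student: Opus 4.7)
The plan is to reduce the event $\{K(0,0)\not\subseteq\mathcal{R}^t_1\}$ to the existence of a suitable support connected $D$-path and then apply a union bound, using the ingredients from Sections 4 and 5. First I would invoke \Cref{lemma:EgeqA} to replace $K(0,0)$ by $K'(0,0)$, and then \Cref{lemma:support_connected_paths} to write
\[
\mathbb{P}\bigl[K(0,0)\not\subseteq\mathcal{R}^t_1\bigr] \leq \mathbb{P}\bigl[\exists\, P\in\Omega_{\kappa,t}^{\mathrm{sup}}\text{ with all cells multi-scale bad}\bigr].
\]
Here $\kappa$ is a free parameter: I would pick $\kappa$ of order $\log t$, chosen so that the hypothesis $\kappa=\mathcal{O}(\log t)$ of \Cref{lemma:weight_of_path} still holds while $\psi_\kappa$ (which grows roughly like $((\kappa-1)!)^{ad}$ by \Cref{lemma:psi_tilde}) is comfortably larger than $\log|\mathcal{R}^t_\kappa|$, a polynomial in $t$.

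Next I would peel off the top scale. By \Cref{lemma:failure_prob} part 1 and a union bound over the $\mathrm{poly}(t)$ scale-$\kappa$ cells whose support can appear in some $P\in\Omega_{\kappa,t}^{\mathrm{sup}}$, the probability that any one of them is multi-scale bad is at most $|\mathcal{R}^t_\kappa|\exp\{-c\psi_\kappa\}$, which by the choice of $\kappa$ is far smaller than the target bound. On the complementary event, every multi-scale bad cell has scale at most $\kappa-1$, so any problematic path lies in $\Omega_{\kappa-1,t}^{\mathrm{sup}}$.

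Now comes the main union bound. Summing over lengths $z$ and scale sequences $(k_1,\ldots,k_z)$ and combining the probability bound from \Cref{lemma:failure_prob_total} with the counting bound from \Cref{lemma:number_of_paths}, I obtain
\[
\mathbb{P}\bigl[\exists\,P\in\Omega_{\kappa-1,t}^{\mathrm{sup}}\text{ with all cells multi-scale bad}\bigr] \leq \sum_{z\geq 1}\sum_{(k_1,\ldots,k_z)}\exp\Bigl\{-\tfrac{c_3}{2}\sum_{j=1}^z\psi_{k_j}\Bigr\}.
\]
By \Cref{lemma:weight_of_path}, every contributing sequence satisfies $\sum_j\psi_{k_j}\geq W$ with $W=Ct/(\log t)^c$ for $d=2$ and $W=Ct$ for $d\geq 3$. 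Writing $\tfrac{c_3}{2}\sum\psi_{k_j}\geq \tfrac{c_3}{4}W+\tfrac{c_3}{4}\sum\psi_{k_j}$ factors the tail from the combinatorial part:
\[
\sum_{z,(k_j)}\exp\Bigl\{-\tfrac{c_3}{2}\sum_{j=1}^z\psi_{k_j}\Bigr\} \leq e^{-c_3 W/4}\sum_{z\geq 1}\Bigl(\sum_{k\geq 1}e^{-c_3\psi_k/4}\Bigr)^{z}.
\]
By \Cref{lemma:psi_tilde} the $\psi_k$ grow at least factorially, so by taking $\alpha_0$ (and hence $\psi_1$) large the inner sum is at most $1/2$, the geometric series converges, and the bound becomes $\mathcal{O}(e^{-c_3 W/4})$, matching the stated exponential tails.

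The main obstacle is the joint calibration of constants in the last two steps: one needs $\alpha_0$ and $m$ large enough so that simultaneously (i) \Cref{lemma:failure_prob}, \Cref{lemma:failure_prob_total} and \Cref{lemma:number_of_paths} all hold with the same $c_3$, (ii) the single-scale generating function $\sum_k e^{-c_3\psi_k/4}$ is finite and smaller than $1/2$, and (iii) the scale-$\kappa$ union bound of Step 2, with $\kappa$ of order $\log t$, is absorbed in the target bound. All three constraints are purely about making $\alpha$ and $m$ large, so the remaining work is to verify that a single choice satisfies them all.
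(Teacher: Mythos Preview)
Your proposal is correct and tracks the paper's argument closely through the first four steps: the reduction via \Cref{lemma:EgeqA} and \Cref{lemma:support_connected_paths}, the choice of $\kappa$ (the paper takes the smallest $\kappa$ with $\psi_\kappa\ge t$, giving $\kappa=\Theta(\log t/\log\log t)$, which is exactly your ``of order $\log t$''), the stripping of scale $\kappa$ by a union bound over $\mathcal{R}^t_\kappa$, and the combination of \Cref{lemma:failure_prob_total} with \Cref{lemma:number_of_paths}.

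The one place where you diverge is the final summation. After reaching $\sum_{z,(k_j)}\exp\{-\tfrac{c_3}{2}\sum_j\psi_{k_j}\}$, the paper reorganises the sum by the total weight $w=\sum_j\psi_{k_j}$, splits $w=h_1\psi_1+w_2$ with $w_2$ coming from scales $\ge 2$, invokes the integer proxies $\tilde\psi_k$ of \Cref{lemma:psi_tilde} so that $w_2$ is essentially $h_2\psi_2$, and then bounds the multiplicity $M(h_1,h_2)$ by a block--ordering and colouring argument combined with \Cref{lemma:binomial}. Your route---writing $\tfrac{c_3}{2}\sum\psi_{k_j}\ge \tfrac{c_3}{4}W+\tfrac{c_3}{4}\sum\psi_{k_j}$, dropping the weight constraint, and collapsing the remaining sum into the geometric series $\sum_{z\ge 1}\bigl(\sum_{k\ge 1}e^{-c_3\psi_k/4}\bigr)^z$---is a genuinely simpler alternative that bypasses \Cref{lemma:psi_tilde} and \Cref{lemma:binomial} entirely. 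Both arguments land on $e^{-cW}$; yours trades the paper's explicit combinatorics for a one-line generating-function bound, at the mild cost of needing $\alpha_0$ large enough that $\sum_{k\ge1}e^{-c_3\psi_k/4}<1$, which is automatic since $\psi_1\ge\alpha$ and the $\psi_k$ grow factorially.
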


\begin{proof}
First, for any \(k\), note that the number of cells in \(\mathcal{R}^t_k\) satisfies
\begin{equation}\label{for:Rk_upper_bound}
	|\mathcal{R}^t_k|\leq\left(2\left\lceil\frac{t}{\ell_k}\right\rceil\right)^d\left\lceil 1+\frac{t}{\beta_k}\right\rceil.
\end{equation}
Also, using Lemmas \ref{lemma:EgeqA} and \ref{lemma:support_connected_paths}, we have
\begin{align*}
	\mathbb{P}[K(i,\tau)\not\subseteq\mathcal{R}^t_1]\leq\mathbb{P}[K'(i,\tau)\not\subseteq\mathcal{R}^t_1]&=\mathbb{P}[\exists P\in\Omega_t\textrm{ s.t. all cells of }P\textrm{ have bad ancestry}]\\
	&\leq \mathbb{P}[\exists P\in\Omega_{\kappa,t}^{\textrm{sup}}\textrm{ s.t. all cells of \(P\) are multi-scale bad}].
\end{align*}

We note that the random variable \(A_{\kappa}\) is defined differently than other scales. It follows from \Cref{lemma:failure_prob}, (\ref{for:Rk_upper_bound}) and the union bound over all cells in \(\mathcal{R}^t_{\kappa}\) that
\begin{equation}
	\mathbb{P}[(A_{\kappa}(i',\tau')=1\textrm{ for all }(i,\tau)\in\mathcal{R}^t_{\kappa}]\geq 1-|\mathcal{R}^t_{\kappa}|\exp\{-c\psi_{\kappa}\}\geq1-\exp\{-c_1t\},
\end{equation}
for some positive constant \(c_1\), where the last step follows by setting \(\kappa\) to be the smallest integer such that \(\psi_{\kappa}\geq t\), which using the Lambert W function and its asymptotics gives that \(\kappa=\Theta\left(\frac{\log t}{\log\log t}\right)\). Let us define \(H\) as the event that \(A_{\kappa}(i,\tau)=1\) for all \((i,\tau)\in\mathcal{R}^t_{\kappa}\). Then, we have
\begin{align*}
	\mathbb{P}&\left[\exists P\in\Omega_{\kappa,t}^{\textrm{sup}}\textrm{ s.t. all cells of }P\textrm{ are multi-scale bad}\right]\\
	&\leq\mathbb{P}\left[H\cap\{\exists P\in\Omega_{\kappa,t}^{\textrm{sup}}\textrm{ s.t. all cells of }P\textrm{ are multi-scale bad}\}\right]+\mathbb{P}\left[H^c\right]\\
	&\leq\mathbb{P}\left[\exists P\in\Omega_{\kappa-1,t}^{\textrm{sup}}\textrm{ s.t. all cells of }P\textrm{ are multi-scale bad}\right]+e^{-c_1t}.
\end{align*}

To get a bound for the term above, we fix a support connected \(D\)-path
\[
	P=((k_1,i_1,\tau_1),\dots,(k_z,i_z,\tau_z)),
\]
and use \Cref{lemma:failure_prob_total} to get
\[
	\mathbb{P}\left[\textstyle\bigcap_{j=1}^z\{A_{k_j}(i_j,\tau_j)=0\}\right]\leq\exp\left\{-c_3\sum_{j=1}^z\psi_{k_j}\right\}.
\]
We now take the union bound over all support connected \(D\)-paths with cells of scales \(k_1,k_2,\dots,k_z\) and using \Cref{lemma:number_of_paths}, we get that
\begin{align*}
	\mathbb{P}\left[\exists P\in\Omega_{\kappa-1,t}^{\textrm{sup}}\textrm{ s.t. }P\textrm{ has }z\textrm{ multi-scale bad cells of scales }k_1,k_2,\dots,k_z\right]\\\leq\exp\left\{-\frac{c_3}{2}\sum_{j=1}^z\psi_{k_j}\right\}.
\end{align*}

This bound depends on \(z\) and \(k_1,\dots,k_z\) only through \(\sum_{j=1}^z\psi_{k_j}\), which we call the weight of the path. Let \(W\) be the set of weights for which there exists at least one path in \(\Omega_{\kappa-1,t}^{\textrm{sup}}\) with such a weight. Then
\begin{equation}\label{for:M(w)}
	\mathbb{P}\left[\exists P\in\Omega_{\kappa-1}^{\textrm{sup}}\textrm{ s.t. all cells of }P\textrm{ are multi-scale bad}\right]\leq\sum_{w\in W}\exp\left\{-\frac{c_3}{2}w\right\}M(w),
\end{equation}
where \(M(w)\) is the number of possible ways to choose \(z\) and \(k_1,k_2,\dots,k_z\) such that \(\sum_{j=1}^z\psi_{k_j}=w\).

Let \(w=\sum_{j=1}^z\psi_{k_j}\) and let \(w_1=\psi_1|\{j:k_j=1\}|\). Let \(w_2=w-w_1\), so \(w_1\) is the weight given by cells of scale 1 and \(w_2\) the weight given by the other cells of the path. Note that by \Cref{lemma:psi_tilde}, \(w_2=\sum_{j:k_j\geq2}\psi_{k_j}\geq \sum_{j:k_j\geq2}\tilde\psi_{k_j}=h_2\psi_2\) for some non-negative integer \(h_2\). Likewise, \(w_2\leq 41h_2\psi_2\) and \(w_1=h_1\psi_1\) for some non-negative integer \(h_1\). Let \(w_0\) be the lower bound on the weight of the path given by \Cref{lemma:weight_of_path}, so for all \(w\in W\), we have \(w\geq w_0\). Since either \(w_1\) or \(w_2\) has to be larger than \(w_0/2\), we have that either \(h_1\geq\frac{w_o}{2\psi_1}\) or \(h_2\geq\frac{w_0}{2\cdot41\psi_2}\). Let \(M(h_1,h_2)\) be the number of ways to choose \(z\) and \(k_1,\dots,k_z\) such that there are \(h_1\) values \(j\) with \(k_j=1\) and \(\sum_{j:k_j\geq2}\tilde\psi_{k_j}=h_2\psi_2\). For any such choice, we have \(w=\sum_{j=1}^z\psi_{k_j}\geq h_1\psi_1+h_2\psi_2\). Then, the sum in the right-hand side of (\ref{for:M(w)}) can be bounded above by
\begin{align*}
	\sum_{h_1\geq\frac{w_0}{2\psi_1}}&\sum_{h_2=0}^\infty\exp\left\{-\frac{c_3}{2}(h_1\psi_1+h_2\psi_2)\right\}M(h_1,h_2)\\
	&+\sum_{h_1=0}^\infty\sum_{h_2\geq\frac{w_0}{82\psi_2}}\exp\left\{-\frac{c_3}{2}(h_1\psi_1+h_2\psi_2)\right\}M(h_1,h_2).
\end{align*}

We now proceed to bound \(M(h_1,h_2)\). Suppose we have \(h_1\) blocks of size \(\psi_1\) and \(h_2\) blocks of size \(\psi_2\). Consider an ordering of the blocks, such that permuting the blocks of the same size does not change the order. Then, for each block of size \(\psi_2\), we color it either black or white, while blocks of size \(\psi_1\) are not colored. For each choice of \(z\) and \(k_1,\dots,k_z\), we associate an order and coloring of the blocks as follows. if \(k_1=1\), then the first block is of size \(\psi_1\). Otherwise, the first \(\tilde\psi_{k_1}/\psi_2\) blocks are of size \(\psi_2\) and have black color. Then, if \(k_2=1\), the next block is of size \(\psi_1\), otherwise the next \(\tilde\psi_{k_2}/\psi_2\) blocks are of size \(\psi_2\) and have white color. We proceed in this way until \(k_z\), where whenever \(k_i\neq 1\) we use the color black if \(i\) is odd and the color white if \(i\) is even. Though there are orders and colorings that are not associated to any choice of \(z\) and \(k_1,\dots,k_z\), each such choice of \(z\) and \(k_1,\dots,k_z\) corresponds to a unique order and coloring of the blocks. Therefore, the number of ways to order and color the blocks gives an upper bound for \(M(h_1,h_2)\). Note that there are \(h_1+h_2\choose h_1\) ways to order the blocks and \(2^{h_2}\) ways to color the size-\(\psi_2\) blocks. Therefore
\begin{align*}
	\mathbb{P}&\left[\exists P\in\Omega_{\kappa-1,t}^{\textrm{sup}}\textrm{ s.t. all cells of }P\textrm{ are multi-scale bad}\right]\\
	&\leq \sum_{h_1\geq\frac{w_0}{2\psi_1}}\sum_{h_2=0}^\infty\exp\left\{-\frac{c_3}{2}(h_1\psi_1+h_2\psi_2)\right\}{h_1+h_2\choose h_1}2^{h_2}\\
	&\quad+\sum_{h_1=0}^\infty\sum_{h_2\geq\frac{w_0}{82\psi_2}}\exp\left\{-\frac{c_3}{2}(h_1\psi_1+h_2\psi_2)\right\}{h_1+h_2\choose h_1}2^{h_2}\\
	&\leq C\sum_{h1\geq\frac{w_0}{2\psi_1}}\sum_{h_2=0}^\infty\exp\left\{-\frac{c_3}{3}(h_1\psi_1+h_2\psi_2)\right\} + C\sum_{h_1=0}^\infty\sum_{h_2\geq\frac{w_0}{82\psi_2}}\exp\left\{-\frac{c_3}{3}(h_1\psi_1+h_2\psi_2)\right\}\\
	&\leq\exp\{-c w_0\},
\end{align*}
for some constants \(C\) and \(c\), where in the second inequality we use \Cref{lemma:binomial} and the fact that \(\alpha\) is sufficiently large to write \(\frac{c_3\psi_1}{2}-1\geq\frac{c_3\psi_1}{3}\), and similarly for \(\psi_2\). Since we defined \(w_0\) to be the lower bound on the weight of a path given by \Cref{lemma:weight_of_path}, the proof is complete.
\end{proof}

\section{Proof of \Cref{thrm:surface_event_simple}}\label{section:proof_of_surface}

\begin{proof}[Proof of \Cref{thrm:surface_event_simple}]
	By \Cref{thrm:surface}, it suffices to show that \[\sum_{r\geq 1}r^d\mathbb{P}\left[\operatorname{rad}_0(H_0)>r\right]<\infty.\] We begin by noting that after tessellating space and time, \(\mathcal{R}^t_1\) contains cells indexed only by \((i,\tau)\) for which \(\|i\|_{\infty}\leq\frac{t}{\ell}\) and \(|\tau|\leq\frac{t}{c\ell^2}\) for some positive constant \(c\). For fixed \(R>0\), if we set \(T>0\) such that
	\[
		\left(\frac{d}{\ell}+\frac{1}{c\ell^2}\right)T\leq R,
	\]
	then \(\mathcal{R}_1^T\) is contained in \(\{u\in\mathbb{Z}^{d+1}:\;\|u\|_1<R\}\). Let \(T(r)=\left(\frac{d}{\ell}+\frac{1}{c\ell^2}\right)^{-1}r\) and fix \(r_0\) such that \(T(r_0)>t_0\), where \(t_0\) comes from \Cref{prop:exp_tail}. Then we have that
	\begin{align*}
		\sum_{r\geq r_0}r^d\mathbb{P}[\operatorname{rad}_0(H_0)>r]&\leq\sum_{r\geq r_0}r^d\mathbb{P}\left[H_0\not\subseteq\mathcal{R}_1^{T(r)}\right] \\
		&\leq\sum_{r\geq r_0}r^d\mathbb{P}\left[K(0,0)\not\subseteq\mathcal{R}_1^{T(r)}\right],
	\end{align*}
	where we used in the second inequality that every \(d\)-path on the space-time tessellation is also a \(D\)-path of bad cells. We now apply \Cref{prop:exp_tail} with \(d\geq 3\) to bound \(\mathbb{P}\left[K(0,0)\not\subseteq\mathcal{R}_1^{T(r)}\right]\) for \(T(r)>t_0\) and get that
	\[
		\sum_{r\geq r_0}r^d\mathbb{P}[\operatorname{rad}_0(H_0)>r]\leq\sum_{r>r_0}r^d\exp\{-C\lambda_0 T(r)\}= \sum_{r\geq r_0}r^d\exp\left\{-C\left(\frac{d}{\ell}+\frac{1}{c\ell^2}\right)^{-1}\lambda_0 r\right\},
	\]
	for some positive constant \(C\), that does not depend on \(r\). Since this expression is finite, we have by \Cref{thrm:surface} that the Lipschitz surface exists and is a.s. finite.

	For \(d=2\) we similarly get that
	\[
		\sum_{r\geq r_0}r^d\mathbb{P}[\operatorname{rad}_0(H_0)>r]\leq\sum_{r\geq r_0}r^d\exp\left\{-C\lambda_0\frac{\ell r}{(\log\ell r)^{-c}}\right\}<\infty.
	\]
\end{proof}

The corollary below gives the probability that a base-height cell \((b,0)\in\mathbb{L}\) is not part of \(F\), i.e. \(F_+(b)\neq 0\) and \(F_-(b)\neq 0\), where \(F_+\) and \(F_-\) are the two Lipschitz functions as defined in \Cref{def:F+andF-}.

\begin{corol}\label{corol:event_surface}
	Assume the setting of \Cref{thrm:surface_event_simple}. There are positive constants \(C\), \(c\), \(C_3\) and \(r_0\) such that for any given \(b\in\mathbb{Z}^d\), we have
	\[
		\mathbb{P}[F_+(b)\cdot F_-(b)\neq0]<\left\{\begin{array}{ll}
			C r_0^d\mathbb{P}[E_{\textrm{st}}(0,0)^c]+\sum_{r\geq r_0}r^d\exp\{-C_3\lambda_0\frac{\ell r}{(\log\ell r)^c}\},&\textrm{for }d= 2\\
			C r_0^d\mathbb{P}[E_{\textrm{st}}(0,0)^c]+\sum_{r\geq r_0}r^d\exp\left\{-C_3\lambda_0	\ell r\right\},&\textrm{for }d\geq 3.
		\end{array}\right.
	\]
\end{corol}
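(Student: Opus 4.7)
The plan is to re-express $\{F_+(b) \cdot F_-(b) \neq 0\}$ as a tail event for the radius of the hill $H_0$, and then apply \Cref{prop:exp_tail}. By \Cref{def:F+andF-}, both $F_+(b)$ and $F_-(b)$ are nonzero exactly when $M_b \neq \emptyset$, and by \Cref{def:mountain} the latter occurs iff some $u \in \mathbb{L}$ satisfies $b \in H_u$. Translation invariance of the underlying process reduces the problem to bounding $\mathbb{P}[M_0 \neq \emptyset] \leq \mathbb{P}[0 \in H_0] + \sum_{u \in \mathbb{L},\, u \neq 0} \mathbb{P}[0 \in H_u]$.

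The first term equals $\mathbb{P}[E_{\textrm{st}}(0,0)^c]$, since $H_0 \neq \emptyset$ precisely when $0$ is closed. For the sum, I would use that $0 \in H_u$ forces $\operatorname{rad}_u(H_u) \geq \|u\|_1$, combined with $\mathbb{P}[\operatorname{rad}_u(H_u) \geq s] = \mathbb{P}[\operatorname{rad}_0(H_0) \geq s]$ by translation invariance. I would then split the sum at a threshold $r_0$ chosen large enough that $T(r_0) > t_0$, where $T(s) = (d/\ell + 1/(c\ell^2))^{-1} s \sim \ell s$ is the same linear scaling already used in the proof of \Cref{thrm:surface_event_simple}. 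The part with $\|u\|_1 \leq r_0$ is bounded by $C r_0^d \, \mathbb{P}[\operatorname{rad}_0(H_0) \geq 1] \leq C r_0^d \, \mathbb{P}[E_{\textrm{st}}(0,0)^c]$, which (also absorbing the $u=0$ contribution) yields the first summand of the stated bound.

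For $\|u\|_1 > r_0$, I would group the sites by $s = \|u\|_1$, bound $|\{u \in \mathbb{L} : \|u\|_1 = s\}| \leq C s^{d-1} \leq C s^d$, and invoke the inclusion $\{\operatorname{rad}_0(H_0) \geq s\} \subseteq \{H_0 \not\subseteq \mathcal{R}^{T(s)}_1\} \subseteq \{K(0,0) \not\subseteq \mathcal{R}^{T(s)}_1\}$ already employed in the proof of \Cref{thrm:surface_event_simple} (with \Cref{lemma:EgeqA} relating the bad cluster to the cluster of bad ancestries). Applying \Cref{prop:exp_tail} with $t = T(s)$ then produces $\exp\{-C_3 \lambda_0 \ell s / (\log \ell s)^c\}$ for $d = 2$ and $\exp\{-C_3 \lambda_0 \ell s\}$ for $d \geq 3$, which upon summation against the $s^d$ counting factor gives the second summand of the claim.

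The main technical point I anticipate is the inclusion $\{\operatorname{rad}_0(H_0) \geq s\} \subseteq \{K(0,0) \not\subseteq \mathcal{R}^{T(s)}_1\}$: because diagonal moves in a $d$-path may traverse open cells, one has to verify that a far-reaching $d$-path still yields a $D$-path of cells with bad ancestry that exits $\mathcal{R}^{T(s)}_1$. This observation is already made inside the proof of \Cref{thrm:surface_event_simple}, so here it is enough to invoke it rather than reprove it.
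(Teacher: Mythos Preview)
Your proposal is correct and follows essentially the same approach as the paper: both reduce $\{F_+(b)\cdot F_-(b)\neq 0\}$ to the existence of some $u\in\mathbb{L}$ with $b\in H_u$, split the union bound over $u$ at a threshold $r_0$, control the near part by $\mathbb{P}[E_{\mathrm{st}}(0,0)^c]$, and control the far part via the tail of $\operatorname{rad}_0(H_0)$ using the same inclusion and application of \Cref{prop:exp_tail} as in the proof of \Cref{thrm:surface_event_simple}. The only cosmetic difference is that the paper keeps $b$ general and applies translation invariance inside the sum, whereas you first reduce to $b=0$; also, the reference to \Cref{lemma:EgeqA} is not needed here (it is used inside the proof of \Cref{prop:exp_tail}), since the relevant inclusion is simply that every $d$-path is a $D$-path of bad cells.
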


\begin{proof}
	Recall first that by construction, \(F_+(b)=0\) if and only if \(F_-(b)=0\). Then,
	we have for a positive constant \(C\) that depends only on \(d\) that
	\begin{align*}
		\mathbb{P}[F_+(b)\neq 0]&\leq\sum_{(x,0)\in L}\mathbb{P}[(x,0)\rightarrowtail_{d}(b,0)]\\
		&=\sum_{\substack{(x,0)\in L\\\|x-b\|_1\leq r_0}}\mathbb{P}[(x,0)\rightarrowtail_{d}(b,0)]+\sum_{\substack{(x,0)\in L\\\|x-b\|_1> r_0}}\mathbb{P}[(x,0)\rightarrowtail_{d}(b,0)]\\
		&\leq \sum_{\substack{(x,0)\in L\\\|x-b\|_1\leq r_0}}\mathbb{P}[E_{\textrm{st}}^c(x,0)]+\sum_{\substack{(x,0)\in L\\\|x\|_1> r_0}}\mathbb{P}[(0,0)\rightarrowtail_{d}(x,0)]\\
		&\leq C r_0^d\mathbb{P}\left[E_{\textrm{st}}^c(0,0)\right]+\sum_{r>r_0}Cr^d\mathbb{P}[\operatorname{rad}_0(H_0)>r].
	\end{align*}
	The sum above can be bounded as in the proof of \Cref{thrm:surface_event_simple}.
\end{proof}
\vspace{0.11cm}
\begin{remark}
	We note that the sum in \Cref{corol:event_surface} is decreasing with \(\ell\) and can in fact be made arbitrarily small by making \(\ell\) large enough. This gives us that if the probability of the event \(E_{\textrm{st}}(i,\tau)\) is increasing in \(\ell\), the expression in \Cref{corol:event_surface} can also be made arbitrarily small.
\end{remark}

\section{Proof of \Cref{thrm:shell_tail}}\label{section:dobule_diagonal}
Recall from \Cref{section:paths} that a hill \(H_u\) is defined as all sites in \(\mathbb{Z}^{d+1}\) that can be reached by a \(d\)-path started from \(u\in\mathbb{L}\). Recall also the definition of a mountain \(M_u\) as a union of all hills that contain \(u\). By the construction of \(D\)-paths, every \(d\)-path on the space-time tessellation is also a \(D\)-path of bad cells. For this reason, as in \Cref{section:proof_of_surface}, we will use an extension \(D\)-paths when bounding probabilities of the existence of various hills and mountains in this section.

We begin by considering a broader range of diagonally connected paths. Intuitively, these are paths that can move within sequences of hills \(H_u\) for different \(u\in\mathbb{L}\).
Let \(u=(b,0)\in\mathbb{L}\) be a cell of the zero-height plane. By \Cref{def:F+andF-}, we know a mountain touches the Lipschitz surface at \((b,F_+(u))\) and \((b,F_-(u))\), but we cannot say anything more than that. If we want to say something about the positive and negative depth of the surface \(F\) across a larger area, we therefore need to consider a large number of different mountains. Since these mountains likely intersect and are composed of some of the same hills, we need a better way to control their dependences. 
To that end, we will consider paths with diagonals that can be thought of as concatenations of different \(D\)-paths, where some \(D\)-paths may be taken in reverse order. In order to define these, which we will refer to as \(DD\)-paths, we will need to define the concept of a \emph{double diagonal}, as well as slightly change the definition of two cells being diagonally connected.

As before, we say that distinct scale 1 cells \((i,\tau)\) and \((i',\tau')\) are \emph{adjacent} if \(\|i-i'\|_{\infty}\leq 1\) and \(|\tau-\tau'|\leq 1\). 
Also, we say that \((i,\tau)\) is \emph{diagonally connected} to \((i',\tau')\) if there exists a sequence of cells \((i,\tau)=(b_0,h_0),(b_1,h_1),\dots,(b_n,h_n)=(\hat i,\hat\tau)\), where the indices \((b_j,h_j)\) refer to the base-height index, such that all the following hold:
\begin{itemize}
	\item for all \(j\in\{1,\dots,n\}\), \(\|b_j-b_{j-1}\|_1=1\) and \(h_{j-1}-h_{j}\in\operatorname{Sign}(h_{j-1})\),
	\item \(h_ih_j\geq0\) for all \(i,j\in\{0,\dots,n\}\), 
	\item \((\hat i,\hat\tau)\) is adjacent to \((i',\tau')\) or \((\hat i,\hat\tau)=(i',\tau')\).
\end{itemize}
Moreover, if \((\hat i,\hat\tau)=(i',\tau')\) we say that \((i,\tau)\) and \((i',\tau')\) are \emph{diagonally linked}.
We say for two distinct cells \((i,\tau)\) and \((i',\tau')\) are \emph{single diagonally connected} if \((i,\tau)\) is diagonally connected to \((i',\tau')\) or if \((i',\tau')\) is diagonally connected to \((i,\tau)\).
Finally, we say two distinct cells \((i,\tau)\) and \((i',\tau')\) are \emph{double diagonally connected}, if there exists \((\hat i,\hat\tau)\) such that \((i,\tau)\) is diagonally connected to \((\hat i,\hat\tau)\), \((i',\tau')\) is diagonally connected to \((\hat i,\hat\tau)\), and \((\hat i,\hat\tau)\) is diagonally linked to \((i,\tau)\) or \((i',\tau')\).

Note that unlike the definition from \Cref{section:fractal} of a cell \((i,\tau)\) being diagonally connected to \((i',\tau')\), two cells being single or double diagonally connected is a symmetric relationship.

\begin{mydef}\label{def:DDpath}
We say a sequence of cells \((i_0,\tau_0),(i_1,\tau_1),\dots,(i_n,\tau_n)\) is a \(DD\)-path if for all \(j\in\{1,\dots,n\}\), we have that the cells \((i_{j-1},\tau_{j-1})\) and \((i_{j},\tau_j)\) are adjacent, single diagonally connected or double-diagonally connected.
\end{mydef}

Recall from \Cref{section:multiscale} the definition of cells of multiple scales. Now we will extend the definition of \(DD\)-paths to multiple scales, as we did in \Cref{section:support_connected_paths} for \(D\)-paths.
We say \((k,i,\tau)\) and \((k',i',\tau')\) are \emph{single diagonally connected} if there exists a cell \((1,\hat i,\hat \tau)\) that is a descendant of \((k,i,\tau)\) and a cell \((1,i'',\tau'')\) that is a descendant of \((k',i',\tau')\), such that \((1,\hat i,\hat \tau)\) and \((1,i'',\tau'')\) are single diagonally connected.
We say \((k,i,\tau)\) and \((k',i',\tau')\) are \emph{double diagonally connected} if there exists a cell \((1,\hat i,\hat \tau)\) that is a descendant of \((k,i,\tau)\) and a cell \((1,i'',\tau'')\) that is a descendant of \((k',i',\tau')\), such that \((1,\hat i,\hat \tau)\) and \((1,i'',\tau'')\) are double diagonally connected.

We refer to a \(DD\)-\emph{path} as a sequence of distinct cells of possibly different scales for which any two consecutive cells in the sequence are either adjacent, single diagonally connected or double diagonally connected to the second.

We say two cells \((k_1,i_1,\tau_1)\) and \((k_2,i_2,\tau_2)\) are \emph{support connected with single diagonals} if there exists a scale 1 cell contained in \(R_{k_1}^{\textrm{2sup}}(i_1,\tau_1)\) and a scale 1 cell contained in \(R_{k_2}^{\textrm{2sup}}(i_2,\tau_2)\) such that the two cells are single diagonally connected. We say two cells \((k_1,i_1,\tau_1)\) and \((k_2,i_2,\tau_2)\) are \emph{support connected with double diagonals} if there exists a scale 1 cell contained in \(R_{k_1}^{\textrm{2sup}}(i_1,\tau_1)\) and a scale 1 cell contained in \(R_{k_2}^{\textrm{2sup}}(i_2,\tau_2)\), such that the two are double diagonally connected.

Recall from \Cref{section:support_connected_paths} the definitions of two cells being well separated and support adjacent.
Finally, we define a sequence of cells \(P=((k_1,i_1,\tau_1),(k_2,i_2,\tau_2),\dots,(k_z,i_z,\tau_z))\) to be a \emph{support connected \(DD\)-path} if the cells in \(P\) are mutually well separated and, for each \(j=1,2,\dots,z-1\), \((k_j,i_k,\tau_j)\) and \((k_{j+1},i_{j+1},\tau_{j+1})\) are support adjacent, support connected with single diagonals or support connected with double diagonals.

\subsection{Multi-scale analysis of \(DD\)-paths}
We now follow the steps of \Cref{section:support_connected_paths}, presenting only the parts where the statements and proofs with \(DD\)-paths differ from how they were for \(D\)-paths.

Define \(\Omega_t\) to be the set of all \(DD\)-paths of cells of scale \(1\) such that the first cell of the path is \((0,0)\) or \((0,0)\) is single diagonally connected to the first cell, and the last cell of the path is the only cell not contained in \([-t,t]^d\times [-t,t]\). Also, define \(\Omega_{\kappa,t}^{\textrm{sup}}\) as the set of all support connected \(DD\)-paths of cells of scale at most \(\kappa\) so that the extended support of the first cell of the path contains \(R_1(0,0)\) or \((0,0)\) is single diagonally connected to a scale 1 cell that is contained in the extended support of the first cell of the path, and the last cell of the path is the only cell whose extended support is not contained in \([-t,t]^d\times [-t,t]\). Then the lemma below states that we can focus on support connected \(DD\)-paths instead of \(DD\)-paths with bad ancestry; the proof is identical to the one of \Cref{lemma:support_connected_paths}.

\begin{figure}[hbt]
  		\includegraphics[width=1 \linewidth]{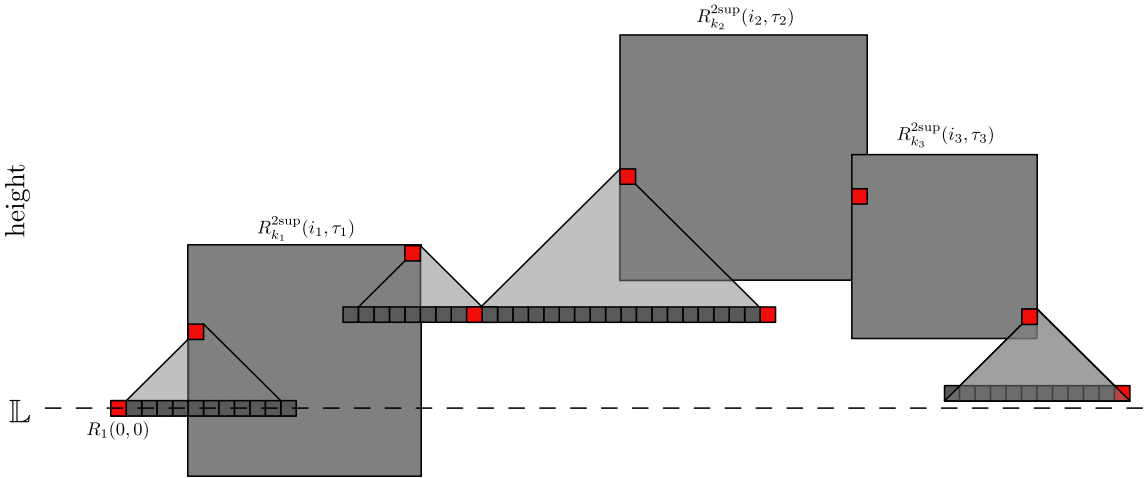}
  		\caption{Example of a \(DD\)-path where the first cell of the path is diagonally connected to \((0,0)\), is double diagonally connected to the second cell, and the second and third cells are adjacent.}
  		\label{fig:DDpaths}
\end{figure}

\begin{lemma}\label{lemma:support_connected_DDpaths}
	We have that
	\begin{align*}
		&\mathbb{P}\left[\exists P\in\Omega_t\textrm{ s.t. all cells of } P\textrm{ have a bad ancestry}\right]\\
		&\leq\mathbb{P}\left[\exists P\in\Omega_{\kappa,t}^{sup}\textrm{ s.t. all cells of }P\textrm{ are multi-scale bad}\right].
	\end{align*}
\end{lemma}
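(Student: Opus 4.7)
The plan is to mirror the two-step argument used in the proof of \Cref{lemma:support_connected_paths}, with each step extended to cover the three possible connection types that appear in a $DD$-path: adjacency, single diagonal connection, and double diagonal connection. At a high level, the additional connection types do not affect the combinatorial structure of the argument, only the bookkeeping.

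First, I would establish the analogue of Step 1: if $P = ((1,i_1,\tau_1), \ldots, (1,i_z,\tau_z)) \in \Omega_t$ is a $DD$-path of scale 1 cells all with bad ancestry, then there exists an intermediate set $\Omega_{\kappa,t}$ (defined exactly as in the proof of \Cref{lemma:support_connected_paths}) and a $DD$-path $\tilde P \in \Omega_{\kappa,t}$ consisting of multi-scale bad cells. The construction is identical: for each $j$, use (\ref{for:A}) to pick a scale $k_j'$ with $A_{k_j'}(\pi_1^{(k_j'-1)}(i_j),\gamma_1^{(k_j'-1)}(\tau_j)) = 0$, then iteratively trim descendants at each scale from $\kappa$ down to $1$ to obtain $\tilde P$. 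The verification that $\tilde P$ is a $DD$-path splits into cases by the connection type in $P$: adjacency and single diagonal connection proceed as in \Cref{lemma:support_connected_paths}; for a double diagonal connection between $(1,i_j,\tau_j)$ and $(1,i_{j+1},\tau_{j+1})$ via an intermediate cell $(1,\hat i,\hat\tau)$, either the two ancestors in $\tilde P$ coincide, or they remain double diagonally connected (the same intermediate diagonal chain witnesses this at the scale of the ancestors), or they degenerate into being adjacent or single diagonally connected because parts of the diagonal chain collapse within a common ancestor.

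Next, I would carry out Step 2, namely construct a support connected $DD$-path $P' \in \Omega_{\kappa,t}^{\textrm{sup}}$ of multi-scale bad cells from a $DD$-path in $\Omega_{\kappa,t}$. Apply the same total order (by scale first, then by time and position) and the same iterative procedure: take the first element of the current list, add it to $P'$, remove all cells not well separated from it, and associate each removed cell with the kept one. Then extract $P''$ by tracing back from the final cell through the association map. The only fresh verification is the analogue of (\ref{eq:supcon}): consecutive cells of $P''$ must be support adjacent, support connected with single diagonals, or support connected with double diagonals. For support adjacency this follows from \Cref{lemma:R_in_R_sup} and \Cref{remark:contain} as in the original. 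For single or double diagonal connection, the witnessing scale 1 diagonal chain between $(1, i_{j-1}, \tau_{j-1})$ and $(1, i_j, \tau_j)$ in $P$ is still contained in the extended supports of the associated cells $(k'', i'', \tau'')$ and $(k', i', \tau')$ respectively, since \Cref{remark:contain} guarantees containment up to the extended support. The containment for the endpoint condition (the extended support of the first cell of $P'$ contains $R_1(0,0)$, or the first cell is single diagonally connected to a scale 1 cell in that extended support) follows exactly as in the original argument.

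The main obstacle is Step 1's verification in the double diagonally connected case, where one must check that after replacing each cell by its (possibly larger scale) ancestor and removing ancestors of ancestors, the double diagonal structure is preserved. The cleanest way to handle this is to observe that the intermediate cell $(1,\hat i,\hat\tau)$ of the double diagonal is a descendant of some cell of $\tilde P$ (either a freshly retained ancestor of $(1,i_j,\tau_j)$ or of $(1,i_{j+1},\tau_{j+1})$, or else it lies inside a retained ancestor shared with one of them); in every case the two scale 1 diagonal chains lift to chains between descendants of the retained ancestors, giving the desired adjacency, single diagonal, or double diagonal connection at the appropriate scale. Once this case analysis is in place, the remainder of the proof is a direct transcription of the proof of \Cref{lemma:support_connected_paths}.
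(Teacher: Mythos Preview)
Your proposal is correct and takes essentially the same approach as the paper, which simply states that the proof is identical to that of \Cref{lemma:support_connected_paths}. Your explicit case analysis for the single and double diagonal connections (in both Step~1 and Step~2) is exactly the verification the paper leaves to the reader, and your handling of the modified endpoint condition for $\Omega_t$ and $\Omega_{\kappa,t}^{\textrm{sup}}$ in Section~\ref{section:dobule_diagonal} is also appropriate.
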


We next have to show that the bound from \Cref{lemma:number_of_paths} holds for \(DD\)-paths as well.

\begin{lemma}\label{lemma:number_of_DDpaths}
	Let \(z\) be a positive integer and \(k_1,k_2,\dots,k_z\geq1\) be fixed. Then, if \(\alpha\) is sufficiently large, the total number of support connected \(DD\)-paths, containing \(z\) cells of scales \(k_1,k_2,\dots,k_z\) is at most \(\exp\left(\frac{c_3}{2}\sum_{j=1}^z\psi_{k_j}\right)\), where \(c_3\) is the same constant as in \Cref{lemma:failure_prob_total} and \(\psi\) is as defined in (\ref{eq:def_psi}).
\end{lemma}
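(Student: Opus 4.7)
The plan is to adapt the three-step structure used in the proof of Lemma~\ref{lemma:number_of_paths}, accommodating the additional connection types present in $DD$-paths. Recall that two consecutive cells of a $DD$-path may now be adjacent, support connected with single diagonals, or support connected with double diagonals; this last type is a ``V''-shaped configuration in which two diagonals meet at a common cell $(\hat i,\hat\tau)$. Crucially, the notion of being well separated and the definitions of $R_k^{\mathrm{2sup}}$ are identical, so the results of Step~1 of the proof of Lemma~\ref{lemma:number_of_paths} carry over verbatim: the number of support connected $DD$-paths with only support-adjacent connections is bounded by $\chi_{k_1}\prod_{j=2}^z\Phi_{k_{j-1},k_j}\leq\exp\{\tfrac{c_3}{8}\sum_j\psi_{k_j}\}$.

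For Step~2 (counting paths with fixed relative positions of the diagonal endpoints), the single-diagonal case is handled exactly as in the proof of Lemma~\ref{lemma:number_of_paths}, except that the relation is now symmetric, which at worst introduces a multiplicative factor of $2$ per step. For a double-diagonal connection the two endpoints lie in the extended supports of the consecutive cells, and once their positions and the meeting cell $(\hat i,\hat\tau)$ are fixed, the overall configuration is determined. The meeting cell lies in one of the two extended supports, so we may bound $\Phi^*_{k,k'}$ for the double-diagonal case by $\overline{\Phi_{k,1}}\cdot\overline{\Phi_{k',1}}\cdot\max(\overline{\Phi_{k,1}},\overline{\Phi_{k',1}})$, which is still at most $\exp\{\tfrac{c_3}{8}(\psi_k+\psi_{k'})\}$ provided $m$ and $\alpha$ are taken slightly larger. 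Combining contributions from all $z-1$ steps and with $\chi_{k_1}$, the resulting bound is of the form $\exp\{\tfrac{3c_3}{8}\sum_j\psi_{k_j}\}$, as before.

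Step~3 is where the main new work lies, as we must count the possible geometries of the diagonals themselves. For a single diagonal at step $j$ with height difference $x_j$ the contribution is $A(x_j+1)$, exactly as before. For a double diagonal at step $j$ the configuration splits into two sub-diagonals of heights $y_j^{(1)}$ and $y_j^{(2)}$ meeting at $(\hat i,\hat\tau)$, contributing at most $A(y_j^{(1)}+1)\,A(y_j^{(2)}+1)$ to the count, and with $y_j^{(1)}+y_j^{(2)}\leq H_{k_{j-1}}+H_{k_j}$ since each sub-diagonal is constrained to within the extended support of its endpoint. We then sum over all valid assignments, replacing the single-diagonal sum in the proof of Lemma~\ref{lemma:number_of_paths} by a sum over at most $2(z-1)$ nonnegative height variables with total bounded by $2H$. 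By the same stars-and-bars argument, this sum is at most $\binom{2z+2H}{2z}A(H/z+1)^{2z}$, which is again of the form $C(C_2H/z)^{4zd}$ up to constants. The inequality $\tilde C\log(H/z)\leq \tfrac{1}{z}\sum_j\psi_{k_j}$ established at the end of the proof of Lemma~\ref{lemma:number_of_paths} then bounds this contribution by $\exp\{\tfrac{c_3}{8}\sum_j\psi_{k_j}\}$.

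The main obstacle is ensuring that doubling the number of diagonal segments per step (from one to potentially two in the double-diagonal case) does not break the exponential bound. The decisive input is the fact, already used in the original argument, that $H_k\leq\psi_k$ for every $k\geq 1$ when $\alpha$ and $m$ are taken sufficiently large; this lets us absorb all polynomial factors in $H_k$ (now arising twice per step instead of once) into the exponentials in $\psi_{k_j}$. Combining the bounds from the three steps yields that the total number of support connected $DD$-paths with scales $k_1,\dots,k_z$ is at most $\exp\{\tfrac{c_3}{2}\sum_j\psi_{k_j}\}$, as required.
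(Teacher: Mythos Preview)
Your overall strategy matches the paper's: carry Steps 1 and 2 over from Lemma~\ref{lemma:number_of_paths} essentially unchanged, and in Step 3 replace the $z-1$ diagonal height variables by (at most) $2z-1$, doubling the total height budget to $2\sum_i H_{k_i}$. The final chain of inequalities leading to $\tilde C\log(H/z)\le \tfrac{1}{z}\sum_j\psi_{k_j}$ is the same, and the conclusion follows. So the architecture is right.

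Two of your justifications are incorrect, though the conclusions you draw from them happen to survive. First, in Step~2 you assert that the meeting cell $(\hat i,\hat\tau)$ lies in one of the two extended supports. It does not: the two sub-diagonals start in the respective extended supports and descend toward $\mathbb L$, meeting at $(\hat i,\hat\tau)$, which can sit well outside both supports. The correct observation (and what the paper does) is that once Step~3 fixes the \emph{pair} of relative positions start$\to$meeting and meeting$\to$end, the situation in Step~2 is identical to the $D$-path case and $\Phi^*_{k,k'}\le\overline{\Phi_{k,1}}\cdot\overline{\Phi_{k',1}}$ with no extra factor; your additional $\max(\overline{\Phi_{k,1}},\overline{\Phi_{k',1}})$ is unnecessary and its stated justification is false. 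Second, in Step~3 you claim $y_j^{(1)}+y_j^{(2)}\le H_{k_{j-1}}+H_{k_j}$ because ``each sub-diagonal is constrained to within the extended support of its endpoint''. They are not so constrained; only their starting points are. What actually holds is the global telescoping bound $\sum_j(y_j^{(1)}+y_j^{(2)})\le 2\sum_i H_{k_i}$, obtained exactly as in Lemma~\ref{lemma:number_of_paths} using that diagonals only decrease height and that the path is anchored near $\mathbb L$.

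Finally, you omit one point the paper does address: for $DD$-paths the first cell need not have $R_1(0,0)$ in its extended support, but may instead have $(0,0)$ single-diagonally connected to a scale-$1$ cell in that support. This extra initial diagonal must be accounted for; the paper simply absorbs it into Step~3 as one more height variable, which your $2z-1$ (or $2(z-1)$) count already accommodates once noted.
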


\begin{proof}
	The proof follows the same steps as the proof of \Cref{lemma:number_of_paths}. The only changes are that the first cell of a \(DD\)-path need not contain \((0,0)\) and the number of different relative positions in step 3 of the proof.

	For the former, we note that the extended support of the first cell of the support connected \(DD\)-path still has to contain \((0,0)\) or \((0,0)\) has to be single diagonally connected to a scale 1 cell in the extended support of the first cell. If we define \(\chi_{k_1}\) as in \Cref{lemma:number_of_paths}, then the first case is already counted by \(\chi_{k_1}\). Otherwise, note that if we fix the relative position of the first and final cell of the single diagonal connecting \((0,0)\) to the extended support of the first cell, we only need to control the number of such relative positions, which is done in step 3. Therefore, it only remains to prove step 3 of the proof for \(DD\)-paths.
	
	Consider two consecutive cells of the \(DD\)-path that are single diagonally connected and let \((1,i,\tau)\) be a cell contained in the extended support of the first cell that is single diagonally connected to a cell \((1,i',\tau')\) that is contained in the extended support of the second cell.
	Then, as in the proof of \Cref{lemma:number_of_paths} we can define
	\begin{align*}
		A(x)&=\max_{(b_1,h_1)\in\mathbb{Z}^{d+1}}|\{(b_2,h_2)\in\mathbb{Z}^{d+1}:|h_2-h_1|=x\textrm{ and}\\
		&\qquad\qquad\qquad\textrm{ \((b_1,h_1)\) is diagonally connected to \((b_2,h_2)\)}\}|.
	\end{align*}
	
	Consider now two consecutive cells of the \(DD\)-path that are double diagonally connected and let \((1,i,\tau)\) be a cell contained in the extended support of the first cell that is double diagonally connected to a cell \((1,i',\tau')\) that is contained in the extended support of the second cell. Furthermore, let \((1,i'',\tau'')\) be the cell of the double diagonal that \((1,i,\tau)\) or \((1,i',\tau')\) is diagonally linked to. Then, if \(x\) is the height difference between \((1,i,\tau)\) and \((1,i'',\tau'')\) and \(y\) is the height difference between \((1,i',\tau')\) and \((1,i'',\tau'')\), we can bound the number of different relative positions of \((1,i',\tau')\) with respect to \((1,i,\tau)\), such that the height difference between \((1,i,\tau)\) and \((1,i'',\tau'')\) is \(x\) and the height difference between  \((1,i',\tau')\) and \((1,i'',\tau'')\) is \(y\) by \(A(x+1)A(y+1)\).
	
	Let \(H_k\) be the side length of the cube \(S_k^{\textrm{2sup}}(i)\) relative to \(S_1(i)\), as in the proof of \Cref{lemma:number_of_paths}.
	Therefore, given the \(z\) cells of scales \(k_1,k_2,\dots,k_z\), the maximum number of scale \(1\) diagonal steps contained in all single and double diagonal connections between the cells of the path is at most
	\[
		H:=2\sum_{i=1}^{z-1} H_{k_i}.
	\]
	For notational convenience, when two consecutive cells \((k,i,\tau)\) and \((k',i',\tau')\) of the \(DD\)-path are double diagonally connected, we now consider as part of the path also the cell \((1,i'',\tau'')\) of the double diagonal that both \((k,i,\tau)\) and \((k',i',\tau')\) are diagonally connected to. Then
	letting \(x_i\), for \(i\in\{1,2,\dots,2z-1\}\) be the height difference between two diagonally connected cells, with \(x_i=0\) if the cells are support adjacent, we have that the number of possible configurations of the diagonal steps is at most
	\begin{equation}\label{for:sum_volDD}
		\sum_{y=0}^H\sum_{\substack{x_1,x_2,\dots,x_{2z-1}\\x_1+\dots+x_{2z-1}=y}}A(x_1+1)A(x_2+1)\cdots A(x_{2z-1}+1).
	\end{equation}
	
	See \Cref{fig:DDpaths} for an illustration of one such configuration. As in the proof of \Cref{lemma:number_of_paths}, we have that
	\[
		A(x_1+1)A(x_2+1)\cdots A(x_{2z-1}+1)\leq A\left(\tfrac{y}{2z-1}+1\right)^{2z-1}.
	\]

	Next, using the above bound and 
	\[
		\sum_{\substack{x_1,x_2,\dots,x_{2z-1}:\\x_1+\dots+x_{2z-1}=y}}1={2z+y-2\choose 2z-2},
	\]
	we have that the sum in (\ref{for:sum_volDD}) is smaller than
	\[
		\sum_{y=0}^H{2z+y-2\choose 2z-2}A\left(\tfrac{y}{2z-1}+1\right)^{2z-1}\leq{2z+H\choose 2z}A\left(\tfrac{H}{2z-1}+1\right)^{2z-1},
	\]
	where the binomial inequality used can easily be proven by induction (using Pascal's rule).

	Then, for some positive constants \(C\) and \(C_2\), we have
	\begin{align}
		{2z+H\choose 2z}A\left(\tfrac{H}{2z-1}+1\right)^{2z-1}&\leq C\frac{(2z+H)^{2z}}{(2z)!}\left(\frac{H}{2z-1}+1\right)^{(2z-1)d}\nonumber\\
		&\leq C \frac{(2z+H)^{2z}}{(2z/3)^{2z}}\left(\frac{2H}{2z}\right)^{2zd}\nonumber\\
		&\leq C(3+3H/(2z))^{2z} \left(\frac{H}{z}\right)^{2zd}\nonumber\\
		&\leq C\left(C_2\frac{H}{z}\right)^{4zd}.\nonumber
	\end{align}
	In order to complete the proof, it remains to show that \(C\left(C_2\frac{H}{z}\right)^{4zd}\leq\exp\left\{\frac{c_3}{8}\sum_{j=1}^z\psi_{k_j}\right\}\), which is equivalent to showing that
	\begin{equation}\label{for:1overalphaDD}
		\tilde Cz\log\left(\frac{H}{z}\right)\leq\sum_{j=1}^z\psi_{k_j},
	\end{equation}
	where \(\tilde C\) is some constant. Setting \(m\) and \(\alpha\) sufficiently large, this holds using the same argument as in the proof of \Cref{lemma:number_of_paths}.
\end{proof}
Similar to \Cref{lemma:number_of_paths}, if \(d\geq 3\) we have that \Cref{lemma:number_of_DDpaths} holds also when we set time to be height in the base-height index. For \(d=2\) one can construct a similar counterexample as the one outlined in \Cref{rem:height2}.

\begin{lemma}\label{lemma:weight_of_path_DD}
	Let \(t>0\) and let \(P=((k_1,i_1,\tau_1),(k_2,i_2,\tau_2),\dots,(k_z,i_z,\tau_z))\) be a path in \(\Omega_{\kappa-1,t}^{\textrm{sup}}\). If \(\alpha\) is sufficiently large and \(\kappa=\mathcal{O}(\log t)\), then there exist a positive constant \(c=c(C_M)\) and a value \(C\) independent of \(t\) such that
	\begin{align}
		\sum_{j=1}^z\psi_{k_j}\geq\begin{cases}
			C\frac{\sqrt{t}}{(\log t)^c},&\textrm{for }d=1,\\
			C\frac{t}{(\log t)^c},&\textrm{for }d=2,\\
			Ct,&\textrm{for }d\geq 3.
		\end{cases}
	\end{align}
\end{lemma}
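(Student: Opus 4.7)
The plan is to follow the proof of \Cref{lemma:weight_of_path} almost verbatim, with only the final geometric step requiring modification to accommodate the extra length that double diagonals can contribute. The core mechanism is unchanged: express $\psi_k$ in terms of the diameter $\Delta_k^{2\mathrm{sup}}$ of the extended support of a scale-$k$ cell, and then argue that a support connected $DD$-path in $\Omega^{\mathrm{sup}}_{\kappa-1,t}$ is forced to have $\sum_j \Delta_{k_j}^{2\mathrm{sup}}$ at least a constant multiple of $t$.

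First, I would simply reuse the estimates from \Cref{lemma:weight_of_path}. Since these bounds depend only on the definitions of $\psi_k$, $\ell_k$, and $\beta_k$, and not on how the cells are linked, they transfer unchanged: under the same hypotheses ($\alpha$ large, $\kappa = \mathcal{O}(\log t)$), one has for $k \geq 1$ that $\psi_k \geq c' \sqrt{\Delta_k^{2\mathrm{sup}}}/(\log t)^{3a+4}$ if $d=1$, $\psi_k \geq c' \Delta_k^{2\mathrm{sup}}/(\log t)^{4a+4}$ if $d=2$, and $\psi_k \geq c' \Delta_k^{2\mathrm{sup}}$ if $d \geq 3$, for a constant $c'>0$ depending on $\epsilon,\lambda_0,m,\ell,C_M$ but not on $t$ or on the path.

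The only new ingredient is the geometric observation that replaces the corresponding step of \Cref{lemma:weight_of_path}. In a support connected $DD$-path, consecutive cells may be linked by a single diagonal of total length bounded by $H_{k_j} + H_{k_{j+1}} \le \Delta_{k_j}^{2\mathrm{sup}} + \Delta_{k_{j+1}}^{2\mathrm{sup}}$, or by a double diagonal, which is the concatenation of two such single diagonals and hence has length at most $2(H_{k_j}+H_{k_{j+1}})$. Summing over the $z-1$ links, the total spatial-temporal extent of $P$ is bounded above by
\[
\sum_{j=1}^{z}\Delta_{k_j}^{2\mathrm{sup}} + 2\sum_{j=1}^{z-1}\bigl(\Delta_{k_j}^{2\mathrm{sup}} + \Delta_{k_{j+1}}^{2\mathrm{sup}}\bigr) \leq 5\sum_{j=1}^{z}\Delta_{k_j}^{2\mathrm{sup}}.
\]
Hence, if $\sum_j \Delta_{k_j}^{2\mathrm{sup}} < t/10$, the extended support of every cell of $P$ (and every scale-$1$ cell appearing on any of its single or double diagonals) would be contained in $[-t,t]^{d+1}$, contradicting $P \in \Omega^{\mathrm{sup}}_{\kappa-1,t}$. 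Therefore $\sum_{j=1}^z \Delta_{k_j}^{2\mathrm{sup}} \geq t/10$.

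Combining the two inputs finishes the proof. For $d \geq 3$, $\sum_j \psi_{k_j} \geq c' \sum_j \Delta_{k_j}^{2\mathrm{sup}} \geq c' t/10$. For $d=2$, the extra polylogarithmic denominator yields $\sum_j \psi_{k_j} \geq c' t /(10(\log t)^{4a+4})$. For $d=1$, use that $\sum_j \sqrt{x_j} \geq \sqrt{\sum_j x_j}$ for nonnegative $x_j$ to obtain $\sum_j \psi_{k_j} \geq c' \sqrt{t/10}/(\log t)^{3a+4}$. There is no real obstacle here beyond bookkeeping; the only substantive point is the factor of $2$ introduced by the fact that double diagonals can double the contribution of diagonal links, which weakens the constant but leaves the exponents and hence the final bound in the statement unchanged.
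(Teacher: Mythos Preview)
Your proposal is correct and follows essentially the same approach as the paper: both reuse the $\psi_k$-versus-$\Delta_k^{2\mathrm{sup}}$ estimates from \Cref{lemma:weight_of_path} verbatim and only adjust the geometric constant to account for the extra length contributed by double diagonals. The paper simply replaces the bound $\sum_j \Delta_{k_j}^{2\mathrm{sup}} \geq t/2$ by $\sum_j \Delta_{k_j}^{2\mathrm{sup}} \geq t/3$ (``two diagonals per cell instead of one''), whereas you carry out the bookkeeping more explicitly and arrive at $t/10$; either constant suffices.
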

\begin{proof}
	The proof is identical to the proof of \Cref{lemma:weight_of_path}, save for one change. In \Cref{lemma:weight_of_path}, when considering the sum across the cells of the path, we require that \(\sum_{j=1}^z\Delta_{k_j}^{2\textrm{sup}}\geq t/2\). Since we now consider two diagonals per cell instead of just one, the term on the right has to be changed to \(t/3\) in order for the statement to still hold. The rest of the proof is unchanged.
\end{proof}

We now define the analogous set of \(K(i,\tau)\) for \(DD\)-paths. Given an increasing event 
\(E_{\textrm{st}}(i,\tau)\), let \(E(i,\tau)\) be the indicator random variable of \(E_{\textrm{st}}(i,\tau)\).
\begin{mydef}
Let \((i,\tau)\in\mathbb{Z}^{d+1}\). If \(E(i,\tau)=1\), define \(K^*(i,\tau)=\emptyset\). Otherwise define \(K^*(i,\tau)\) as the set
\begin{align*}
	\{&(i',\tau')\in\mathbb{Z}^{d+1}:\,E(i',\tau')=0\textrm{ and }
	\exists\textrm{ a }DD\textrm{-path of bad cells from }(i,\tau)\textrm{ to }(i',\tau')\}.
\end{align*}
\end{mydef}

\begin{prop}\label{prop:exp_tailDD}
	For each \((i,\tau)\in\mathbb{Z}^{d+1}\), let \(E_{\textrm{st}}(i,\tau)\) be an increasing event that is restricted to the super cube \(i\) and the super interval \(\tau\), and let \(\nu_{E_{\textrm{st}}}\) be the probability associated to \(E_{\textrm{st}}\) as defined in \Cref{def:probassoc}. Fix a constant \(\epsilon\in(0,1)\), and integer \(\eta\geq1\) and the ratio \(\beta/\ell^2>0\). Fix also \(w\) such that
	\[
		w\geq\sqrt{\frac{\eta\beta}{c_2\ell^2}\log\left(\frac{8c_1}{\epsilon}\right)},
	\]
	for some constants \(c_1\) and \(c_2\) which depend only on the graph. Then, there exist constants \(c\) and \(C\), and positive numbers \(\alpha_0\) and \(t_0\) that depend on \(\epsilon\), \(\eta\), \(w\) and the ratio \(\beta/\ell^2\) such that if
	\[
	\alpha=\min\left\{C_{M}^{-1}\epsilon^2\lambda_0\ell^d,\log\left(\frac{1}{1-\nu_{E_{\textrm{st}}}((1-\epsilon)\lambda,Q_{(2\eta+1)\ell},Q_{w\ell},\eta\beta)}\right)\right\}\geq\alpha_0,
	\]
	we have for all \(t\geq t_0\) that
	\[
	\mathbb{P}\left[K^*(0,0)\not\subseteq\mathcal{R}^t_1\right]\leq\left\{
	\begin{array}{ll}
		\exp\left\{-C\lambda_0\frac{t}{(\log t)^c}\right\}&\textrm{for }d=2\\
		\exp\left\{-C\lambda_0 t\right\}&\textrm{for }d\geq 3.
	\end{array}
	\right.
	\]
\end{prop}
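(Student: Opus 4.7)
The plan is to mirror the proof of \Cref{prop:exp_tail} essentially verbatim, substituting the $DD$-path analogs (\Cref{lemma:support_connected_DDpaths}, \Cref{lemma:number_of_DDpaths}, \Cref{lemma:weight_of_path_DD}) for their $D$-path counterparts. The key observation that makes this substitution painless is that \Cref{lemma:failure_prob_total}, which provides the exponential upper bound on the probability that every cell in a collection of well-separated cells is multi-scale bad, depends only on the cells being mutually well separated and not on the kind of connectivity of the underlying path. Therefore it applies to any sequence of cells coming from a support connected $DD$-path with no modification whatsoever.

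First I would bound
\[
\mathbb{P}[K^*(0,0)\not\subseteq \mathcal{R}_1^t] \leq \mathbb{P}\left[\exists P \in \Omega_t \text{ s.t.\ all cells of } P \text{ have bad ancestry}\right],
\]
using $E(i,\tau)\geq A(i,\tau)$ from \Cref{lemma:EgeqA} (which is stated for cells, not for the connectivity type, so it transfers unchanged), and then apply \Cref{lemma:support_connected_DDpaths} to pass to the existence of a support connected $DD$-path in $\Omega_{\kappa,t}^{\textrm{sup}}$ all of whose cells are multi-scale bad. As in the proof of \Cref{prop:exp_tail}, I then handle scale $\kappa$ separately: choose $\kappa$ to be the smallest integer with $\psi_\kappa \geq t$ (so $\kappa = \Theta(\log t/\log\log t)$), apply \Cref{lemma:failure_prob} and a union bound over $\mathcal{R}_\kappa^t$ to see that, outside of an event of probability at most $\exp\{-c_1 t\}$, every scale-$\kappa$ cell is multi-scale good, and consequently any offending path may be assumed to lie in $\Omega_{\kappa-1,t}^{\textrm{sup}}$.

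For a fixed $P = ((k_1,i_1,\tau_1),\ldots,(k_z,i_z,\tau_z)) \in \Omega_{\kappa-1,t}^{\textrm{sup}}$, \Cref{lemma:failure_prob_total} gives the bound $\mathbb{P}[\bigcap_{j=1}^z \{A_{k_j}(i_j,\tau_j)=0\}] \leq \exp\{-c_3 \sum_{j=1}^z \psi_{k_j}\}$, and \Cref{lemma:number_of_DDpaths} bounds the number of support connected $DD$-paths with prescribed scales $k_1,\ldots,k_z$ by $\exp\{\tfrac{c_3}{2}\sum_{j=1}^z \psi_{k_j}\}$. The union bound therefore yields that the probability of the existence of a multi-scale bad support connected $DD$-path with these scales is at most $\exp\{-\tfrac{c_3}{2}\sum_j \psi_{k_j}\}$, which depends on the scales only through the weight $w = \sum_j \psi_{k_j}$.

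The remaining bookkeeping is identical to that in the proof of \Cref{prop:exp_tail}: split $w = w_1 + w_2$ into the contributions from scale-$1$ and higher-scale cells, write $w_1 = h_1 \psi_1$ and $w_2 \in [h_2 \psi_2, 41 h_2 \psi_2]$, count arrangements by ordering $h_1$ size-$\psi_1$ blocks and $h_2$ colorable size-$\psi_2$ blocks to bound the multiplicity by $\binom{h_1+h_2}{h_1} 2^{h_2}$, and evaluate the resulting geometric sum (noting that $\alpha$ being large enough allows one to absorb the $\binom{h_1+h_2}{h_1} 2^{h_2}$ factor into the exponential). Substituting the dimension-dependent lower bound on the weight from \Cref{lemma:weight_of_path_DD} then yields the claimed tail estimates. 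The proof is therefore essentially mechanical; the only point one might worry about is that $DD$-paths in $\Omega_t$ are allowed to begin with a cell that is merely single diagonally connected to $(0,0)$ (rather than containing it), but this generalization has already been absorbed into the $\chi_{k_1}$ factor in the proof of \Cref{lemma:number_of_DDpaths}, so no additional work is required.
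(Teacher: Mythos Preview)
Your proposal is correct and follows exactly the approach taken in the paper, which simply states that the proof proceeds identically to that of \Cref{prop:exp_tail} with \Cref{lemma:support_connected_paths}, \Cref{lemma:number_of_paths}, and \Cref{lemma:weight_of_path} replaced by their $DD$-path analogs (\Cref{lemma:support_connected_DDpaths}, \Cref{lemma:number_of_DDpaths}, \Cref{lemma:weight_of_path_DD}). Your write-up is in fact more detailed than the paper's own proof, and your observations about \Cref{lemma:failure_prob_total} depending only on well-separatedness and about the initial diagonal being absorbed into $\chi_{k_1}$ are both accurate and helpful.
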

\begin{proof}
	The proof of this result proceeds in the same way as the proof of \Cref{prop:exp_tail}, by replacing \Cref{lemma:support_connected_paths} with \Cref{lemma:support_connected_DDpaths}, \Cref{lemma:number_of_paths} with \Cref{lemma:number_of_DDpaths} and \Cref{lemma:weight_of_path} with \Cref{lemma:weight_of_path_DD}.
\end{proof}

We now argue that \Cref{prop:exp_tailDD} implies that the Lipschitz surface not only almost surely exists as shown in \Cref{thrm:surface_event_simple}, but that areas of the surface that have non-zero height are finite as well. To see why, denote with \(u_i=(b_i,0)\) sites in \(\mathbb{L}\) and consider a path along the surface \(F\). More precisely, let \(\pi=\{(b_1,F_+(u_1)),(b_2,F_+(u_1)),\dots,\) \((b_n,F_+(u_n))\}\) be such that \(F_+(u_i)\neq 0\) for all \(i\in \{1,\dots,n\}\) and \(\|u_i-u_{i-1}\|_{1}=1\) for all \(i\in\{2,\dots,n\}\). If such a path exists, then both sides of the Lipschitz surface have non-zero height at least at the cells of the path, so one can follow the path \((u_1,u_2\dots,u_n)\) and never reach the Lipschitz surface \(F\). Conversely, if a path \(\pi\) as above that leaves a ball of finite radius does not exist, a self-avoiding path will have to reach the surface in finitely many steps. Furthermore, since time is one of the \(d+1\) dimensions, one cannot construct a time directed path without it containing a cell \((b,F_+(u))\) or \((b,F_-(u))\) for some \(u=(b,0)\in \mathbb{L}\) within a finite number of steps. This follows from the fact that by \Cref{thrm:surface_event_simple} the surface is a.s. finite, so a path can avoid intersecting it indefinitely only if there is always at least one way to construct a path between to the two sides of the surface. If however, paths along which the two sides of the surface have non-zero height cannot have arbitrary length, we get that avoiding the two sides indefinitely is impossible.

To simplify things, we first observe that we can limit ourselves to only the positive Lipschitz open surface, since \(F_+(u)=0\) if and only if \(F_-(u)=0\), by the definition of the two sides of the surface.

Recall from \Cref{section:paths} the definition of a hill \(H_u\). In the following, we will use \(H_i\), \(i\in\mathbb{Z}\) to differentiate between different hills without specifying a cell \(u\in\mathbb{L}\) for which \(H_i=H_u\). We now show that the existence of a path along the surface with only positive heights implies the existence of a sequence of hills that are pairwise intersecting or adjacent. Formally, we define the following.
\begin{mydef}
	We say a hill \(H_i\) is \emph{adjacent} to a hill \(H_{i'}\), if there exist a cell \(u\in H_i\) and a cell \(v\in H_{i'}\) such that \(\|u-v\|_1=1\). We say \(H_i\) and \(H_{i'}\) are \emph{intersecting}, if there exists a cell \(u\) such that \(u\in H_i\) and \(u\in H_{i'}\).
\end{mydef}

\begin{lemma}\label{lemma:sequence_of_hills}
	Write \(u_i=(b_i,0)\in\mathbb{L}\) and let \(\pi=\{(b_1,F_+(u_1)),(b_2,F_+(u_2)),\dots,\allowbreak(b_n,F_+(u_n))\}\) be a path, such that for all \(i\in \{1,\dots,n\}\), \(F_+(u_i)\neq 0\). Then there exists a sequence of hills \(\mathcal{H}=H_1,H_2,\dots,H_k\), \(k\leq n\), such that for every \(u_{\ell}\) there exists a hill \(H_k\in\mathcal{H}\) that contains \(u_{\ell}\), and such that for all \(i\in\{1,\dots k\}\), there exists at least one \(j\neq i\), \(j\in\{1,\dots,k\}\), for which \(H_i\) intersects with or is adjacent to \(H_j\).
\end{lemma}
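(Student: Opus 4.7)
The plan is to extract, for each cell $u_\ell$ of the path $\pi$, a hill that contains $u_\ell$, and then take $\mathcal{H}$ to be the set of distinct hills obtained in this way. The required adjacency-or-intersection condition will then be forced by the fact that consecutive cells of $\pi$ are unit $\ell^1$-neighbours in $\mathbb{L}$.

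First, for every $\ell\in\{1,\dots,n\}$ the assumption $F_+(u_\ell)\neq 0$ together with \Cref{def:F+andF-} yields $M_{u_\ell}\neq\emptyset$. Since $M_{u_\ell}$ is by definition the union of all hills that contain $u_\ell$, this produces some $w_\ell\in\mathbb{L}$ with $u_\ell\in H_{w_\ell}$. Let $H_1,\dots,H_k$ be the distinct elements of $\{H_{w_1},\dots,H_{w_n}\}$, so $k\leq n$. The first conclusion of the lemma now follows immediately: every $u_\ell$ lies in $H_{w_\ell}$, which equals some $H_j$ in $\mathcal{H}$.

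For the adjacency condition, fix $i\in\{1,\dots,k\}$ and set $L_i=\{\ell: u_\ell\in H_i\}$, which is nonempty by construction. If $L_i=\{1,\dots,n\}$ then every $u_\ell$ is in $H_i$, forcing $k=1$, in which case the condition is vacuous. Otherwise $L_i$ is a proper, nonempty subset of $\{1,\dots,n\}$, so the function $\ell\mapsto\mathbbm{1}_{u_\ell\in H_i}$ must flip value between two consecutive indices of $\pi$. Pick such indices with $\ell\in L_i$ and $\ell'\in\{\ell-1,\ell+1\}\setminus L_i$. Then $u_{\ell'}\in H_{w_{\ell'}}$ and, since $u_{\ell'}\notin H_i$, this hill is distinct from $H_i$ and equals some $H_j\in\mathcal{H}$ with $j\neq i$. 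Because $\pi$ is a nearest-neighbour path in $\mathbb{L}$, we have $\|u_\ell-u_{\ell'}\|_1=1$, so $H_i$ contains the cell $u_\ell$ and $H_j$ contains the cell $u_{\ell'}$ at $\ell^1$-distance $1$; hence $H_i$ and $H_j$ are adjacent in the sense of the preceding definition.

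No substantial obstacle arises: the argument is purely combinatorial bookkeeping. The only subtle point is the passage from ``$H_i$ does not cover the entire path'' to ``some adjacent pair of cells of $\pi$ belongs to two distinct hills'', and this is exactly the statement that the indicator of $L_i$ along the path cannot switch from $1$ to $0$ without two consecutive indices witnessing the switch.
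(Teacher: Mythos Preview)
Your proof is essentially correct and follows the same combinatorial idea as the paper's own argument, namely exploiting that consecutive $u_\ell$ along $\pi$ are $\ell^1$-neighbours in $\mathbb{L}$ to force adjacency between hills covering them. The paper builds $\mathcal{H}$ greedily (add a hill covering the first uncovered cell, observe it touches a previously added hill because the new cell neighbours a covered one), while you pick one hill per cell up front and then argue adjacency at the boundary of each $L_i$; these are two phrasings of the same mechanism.

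One small slip: the claim that $L_i=\{1,\dots,n\}$ forces $k=1$ is false. You defined $L_i=\{\ell:u_\ell\in H_i\}$, not $\{\ell:H_{w_\ell}=H_i\}$, so it is perfectly possible that every $u_\ell$ lies in $H_i$ while $k\geq 2$ (e.g.\ if two distinct hills both contain all the $u_\ell$). The fix is immediate: if $k\geq 2$ pick any $H_j\neq H_i$; then $H_j=H_{w_m}$ for some $m$, so $u_m\in H_j$, and since $m\in L_i$ also $u_m\in H_i$, giving $H_i\cap H_j\neq\emptyset$. With this correction the argument is complete.
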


\begin{proof}
	We will prove the existence of the sequence of hills iteratively. Let \(\mathcal{H}=\emptyset\) be the set of all hills that are part of the sequence already. We then add hills to \(\mathcal{H}\) in the following manner.
	Let \(u=(b,0)\) be the first cell of the path \(P=\left((b_1,0),(b_2,0),\dots,(b_n,0)\right)\) that is not contained in \(\bigcup_{H\in\mathcal{H}}H\). Since \(F_+(u)\neq 0\) by assumption, there has to exist at least one cell \(v\in\mathbb{L}\) such that \(u\in H_{v}\). Since the cell \(u\) is contained in \(H_{v}\) and it is adjacent to at least 1 cell contained in \(\bigcup_{H\in\mathcal{H}}H\) (except for when \(\mathcal{H}=\emptyset\)), we get that  \(H_{v}\) and at least one hill from \(\mathcal{H}\) are adjacent or they intersect. We add \(H_v\) to \(\mathcal{H}\), remove all cells of \(P\) that are contained in \(H_v\) from \(P\), and repeat the procedure.	
	After at most \(n\) steps, the recursion ends and \(\mathcal{H}\) is a set of \(k\) hills for some \(k\leq n\), such that every hill intersects or is adjacent to at least one other hill in the set.
\end{proof}

We now want to show that if the sequence of hills \(\mathcal{H}\) from \Cref{lemma:sequence_of_hills} exists, then a \(DD\)-path exists between any two cells contained in \(\bigcup_{H\in\mathcal{H}}H\).

\begin{lemma}\label{lemma:hills-to-DDpath}
	Let \(\mathcal{H}=H_1,H_2,\cdots H_k\) be a sequence of hills as in \Cref{lemma:sequence_of_hills}. For any two \((b,0),(b',0)\in \bigcup_{H_i\in\mathcal{H}}H_i\), there exists a \(DD\)-path that starts in \((b,0)\) and ends in \((b',0)\).
\end{lemma}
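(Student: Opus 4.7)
The plan is to build the $DD$-path in two stages. First, a within-hill step: for any cell $v = (b,0) \in H_u \cap \mathbb{L}$, I will construct a $DD$-path from $u$ to $v$ by using the $d$-path $u = w_0, w_1, \dots, w_n = v$ guaranteed by $v \in H_u$ (since $\widehat{v} = \{v\}$). Read off the subsequence of closed cells from this $d$-path; it contains $u$ and every vertical-move destination. Between two consecutive closed cells in this subsequence, the intermediate portion of the $d$-path is either purely diagonal — so the two endpoints are single diagonally connected — or consists of a vertical ascent to a peak followed by diagonal descent, in which case the two endpoints are double diagonally connected via the peak, with the two descent legs providing the two diagonally-connected conditions and one of them (landing exactly on its closed endpoint) providing the diagonally-linked condition.

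Second, a between-hills step that glues together within-hill $DD$-paths using the chain property of $\mathcal{H}$ from \Cref{lemma:sequence_of_hills}: the hills were added so that each new one intersects or is adjacent to some earlier hill. From this I extract a sub-chain $H_{i_0}, \dots, H_{i_\ell}$ with $(b,0) \in H_{i_0}$, $(b',0) \in H_{i_\ell}$, and consecutive hills intersecting or adjacent. For each consecutive pair I pick bridging cells and link the defining cells $u_{i_s}, u_{i_{s+1}}$ through them via the within-hill step, inserting a single adjacency move of the $DD$-path when the bridge is an adjacency rather than an intersection. Concatenation yields a global $DD$-path $(b,0) \rightsquigarrow u_{i_0} \rightsquigarrow \dots \rightsquigarrow u_{i_\ell} \rightsquigarrow (b',0)$.

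The main obstacle is the within-hill step, specifically verifying the somewhat asymmetric double-diagonal definition at peaks of the $d$-path: one needs both closed endpoints to be diagonally connected to a common cell $(\hat i, \hat \tau)$ and additionally this common cell to be diagonally linked — that is, to end a diagonal path exactly — at one of them. At a peak of the $d$-path both descent legs are diagonal paths landing exactly on the two closed endpoints, so both requirements are met simultaneously, but the bookkeeping must be carried out explicitly against the formal definitions. A secondary concern is the distinctness of cells across the concatenation; since the cells of a $DD$-path must be distinct (cf.\ \Cref{def:DDpath}), overlapping segments have to be merged or locally re-routed, which is always possible using the flexibility that oscillating diagonal paths afford in $d \geq 2$.
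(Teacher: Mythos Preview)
Your overall architecture --- extract a chain of hills linking $(b,0)$ to $(b',0)$, then thread together the defining closed sites $u_{i_s}$ via bridging cells --- matches the paper. However, the within-hill step is overcomplicated and contains a direction error, and the paper's solution there is dramatically simpler.

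The paper's proof rests on one observation you missed: \emph{every $d$-path is already a $DD$-path}, because any two consecutive cells of a $d$-path (vertical or diagonal move) differ by at most $1$ in each coordinate and are therefore adjacent in the sense of the $DD$-path definition. Once you have this, the within-hill step is trivial --- the $d$-path from $u$ to $v$ \emph{is} the required $DD$-path --- and the argument reduces to two further symmetry/concatenation facts: the reverse of a $DD$-path is a $DD$-path (adjacent, single-diagonal, and double-diagonal are all symmetric relations), and two $DD$-paths sharing an endpoint can be concatenated (removing loops if distinctness is required). The paper then just chains $(b,0)\to u_{i_1}\to v_{i_1}\to u_{i_2}\to\cdots\to u_{i_\ell}\to(b',0)$ using the $d$-paths and their reversals.

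Your case analysis for the within-hill step is also incorrect as stated. Between two consecutive known-closed cells $w_a,w_c$ in a $d$-path, the intervening moves are diagonals followed by a single vertical move (the one that certifies $w_c$ as closed), so the height profile is \emph{descent to a trough then one step up}, not ``vertical ascent to a peak followed by diagonal descent''. Consequently the connection is realised as a \emph{single} diagonal connection ($w_a$ descends diagonally to $w_{c-1}$, which is adjacent to $w_c$), not a double one. So your mechanism for invoking the double-diagonal clause at peaks does not apply here; fortunately it is also unnecessary. The rest --- chain extraction, bridging via adjacency, and loop removal at concatenation --- is fine and in line with the paper.
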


\begin{proof}
	Let \(u_1,u_2,\dots,u_{k}\in\mathbb{L}\) be the cells such that \(H_i=H_{u_i}\) for all \(i\in\{1,2,\dots,k\}\).
	Next, observe that by the definition of \(\mathcal{H}\), there exists a sequence of hills \(H_{i_1},H_{i_2},\dots,H_{i_{\ell}}\) such that \((b,0)\in H_{i_1}\), \((b',0)\in H_{i_{\ell}}\) and every hill in the sequence is adjacent or intersecting with the subsequent hill. For every \(j\in\{1,2,\dots,\ell\}\), let \(v_{i_j}\in H_{i_j}\) be a cell that is contained in \(H_{i_{j+1}}\) or adjacent to a cell in \(H_{i_{j+1}}\).
	
	By definition of a hill, there exists a \(d\)-path \(P_1\) from \(u_{i_1}\) to \((b,0)\). Furthermore, there exists a \(d\)-path \(P_2\) from \(u_{i_1}\) to \(v_{i_1}\) and a \(d\)-path \(P_3\) from \(u_{i_2}\) to \(v_{i_1}\) or a cell that is adjacent to \(v_{i_1}\). By repeating this, we obtain the sequence of cells
	\[
		(b,0),u_{i_1},v_{i_1},u_{i_2},v_{i_2},\dots,v_{i_{\ell-1}},u_{i_{\ell}},(b',0),
	\]
	where there exists a \(d\)-path from the first cell to the second or from the second to the first (or a cell adjacent to it) for every consecutive pair of cells. It remains to show that this implies that there exists a \(DD\)-path from \((b,0)\) to \((b',0)\).

	Note first that similar to \(D\)-paths, every \(d\)-path is also a \(DD\)-path. This follows directly from the fact that \(DD\)-paths are defined as an extension of \(D\)-paths. Next, note that if a sequence of cells \((w_1,w_2,\dots,w_n)\in\mathbb{Z}^{d+1}\) is a \(DD\)-path, then the reverse sequence, i.e. \((w_n,w_{n-1},\dots,w_1)\) is also a \(DD\)-path. This follows trivially from the fact that being adjacent, single diagonally connected and double diagonally connected are all symmetric relationships between cells. Finally, note that if there exists a \(DD\)-path from a cell \(w_1\) to some cell \(w_2\) and there exists a \(DD\)-path from \(w_2\) to \(w_3\), then there exists at least one \(DD\)-path from \(w_1\) to \(w_3\). Once such path can be constructed by concatenating the two \(DD\)-paths and removing any cells in the concatenated path that would result in loops, i.e. if a site appears in the concatenated path more than once, remove from the path all sites between the first and last appearance of the site in the path, as well as the last appearance of the site.
	
	Then, using these facts with the sequence 
	\[
		(b,0),u_{i_1},v_{i_1},u_{i_2},v_{i_2},\dots,v_{i_{\ell-1}},u_{i_{\ell}},(b',0)
	\]
	concludes the lemma.
\end{proof}

We are now ready to prove \Cref{thrm:shell_tail}.

%

\begin{proof}[{Proof of \Cref{thrm:shell_tail}}]
	Note that the open Lipschitz surface exists a.s. by \Cref{thrm:surface}, so we only need to show that it surrounds the origin at some finite distance.

	Assume the converse. Then, for any \(r>0\) there must exist a path of adjacent cells \((0,0)=(b_1,0),\dots,(b_n,0)\) with \(\|(b_n,0)\|_1>r\), such that \(F_+((b_i,0))\neq 0\) for all \(i\in\{1,\dots,n\}\). By \Cref{lemma:sequence_of_hills}, this implies the existence of a sequence of hills such that the first one contains the origin and the last one contains \((b_n,0)\). By \Cref{lemma:hills-to-DDpath}, this gives the existence of a \(DD\)-path from the origin to \((b_n,0)\).

	Note that by \Cref{prop:exp_tailDD}, for \(t\geq t_0\) we have that the probability that such a \(DD\)-path exists is smaller than
	\[
	\mathbb{P}[K^*(0,0)\not\subseteq\mathcal{R}_1^t]\leq\left\{
	\begin{array}{ll}
		\exp\left\{-C\lambda_0\frac{t}{(\log t)^c}\right\}&\textrm{for }d=2\\
		\exp\left\{-C\lambda_0 t\right\}&\textrm{for }d\geq 3.
	\end{array}
	\right.
	\]

	From here, setting \(t=\left(\frac{d}{\ell}+\frac{1}{c\ell^2}\right)^{-1}r\) and using the same steps as in the proof of \Cref{thrm:surface_event_simple} establishes the claim for \(r\geq r_0:=\left(\frac{d}{\ell}+\frac{1}{c\ell^2}\right)t_0\).
\end{proof}

Next, we show that \Cref{corol:percolate_simple} holds. Observe first the following well known geometric property. Let \(B^2\) be the plane spanned by any two base vectors of the base-height index. Recall also the definition of \(\mathbb{L}=\{(x,0),x\in\mathbb{Z}^d\}\), the zero-height hyperplane of \(\mathbb{Z}^{d+1}\). It then holds that
\begin{align*}
	&\mathbb{P}\left[\textrm{zero height cells percolate in }B^2\right]\\
	&\leq \mathbb{P}\left[\textrm{zero height cells percolate in }\mathbb{L}\right],
\end{align*}
since it clearly holds that the first event implies the second. Therefore, it is enough to show that the first probability is positive for \Cref{corol:percolate_simple} to hold.

\begin{corol}\label{corol:percolate}
	Let \(d=2\) and let \(E_{\textrm{st}}(i,\tau)\) be an increasing event restricted to the super cell \((i,\tau)\). If \(\ell\) is sufficiently large and \(\mathbb{P}\left[E_{\textrm{st}}(0,0)\right]\) is large enough, then \(F\cap\mathbb{L}\) percolates within \(\mathbb{L}\) with positive probability.
\end{corol}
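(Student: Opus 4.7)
The plan is to prove something slightly stronger, namely that $F \cap \mathbb{L}$ percolates within $\mathbb{L}$ almost surely, from which the claimed positive probability follows immediately. The key input is \Cref{thrm:shell_tail} with $d=2$, which gives
\[
  \mathbb{P}\bigl[F\text{ does not surround the origin at distance }r\bigr] \leq \sum_{s\geq r}s^{2}\exp\Bigl\{-C\lambda_{0}\tfrac{\ell s}{(\log\ell s)^{c}}\Bigr\}
\]
for all sufficiently large $r$. The right-hand side is the tail of a convergent series and hence tends to $0$ as $r \to \infty$. Thus $F$ surrounds the origin at some finite distance with probability one.

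Next I will transfer this surrounding property to the percolation statement in $\mathbb{L}$. For $b \in \mathbb{Z}^{2}$, write $C(b)$ for the nearest-neighbour connected component of $b$ in $\mathbb{L} \setminus F$. Every nearest-neighbour path in $\mathbb{L}\subseteq\mathbb{Z}^{d+1}$ starting from $(0,0)$ is, in particular, a nearest-neighbour path in $\mathbb{Z}^{d+1}$ starting from $(0,0)$, so if $F$ surrounds $(0,0)$ at some finite distance $r$, then $C(0)$ is contained in the $\ell_{1}$-ball of radius $r$ around $(0,0)$ in $\mathbb{L}$ and is therefore finite. Combining with the tail bound above yields $\mathbb{P}[|C(0)|<\infty]=1$. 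Because the Poisson point process, the i.i.d.\ conductances, and the family of events $(E_{\mathrm{st}}(i,\tau))$ are jointly translation invariant, the same equality holds for every $b\in\mathbb{Z}^{2}$, and a countable union bound then gives
\[
  \mathbb{P}\bigl[\exists b\in\mathbb{Z}^{2}:\,|C(b)|=\infty\bigr]\leq\sum_{b\in\mathbb{Z}^{2}}\mathbb{P}\bigl[|C(b)|=\infty\bigr]=0.
\]
So with probability one every connected component of $\mathbb{L}\setminus F$ is finite, which is by definition the percolation of $F \cap \mathbb{L}$ within $\mathbb{L}$.

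I do not foresee a serious obstacle: the argument essentially repackages \Cref{thrm:shell_tail}. The only point of care is verifying that its hypotheses, which coincide with those of \Cref{thrm:surface_event_simple}, are in force under the stated ``$\ell$ sufficiently large and $\mathbb{P}[E_{\mathrm{st}}(0,0)]$ sufficiently large'' assumptions. As explained in the discussion following \Cref{thrm:surface_event_simple}, both terms of the minimum in (\ref{eq:main_theorem}) can be made exceed any prescribed $\alpha_{0}$ by taking $\ell$ large enough (for the first term, via $C_{M}^{-1}\epsilon^{2}\lambda_{0}\ell^{d}$) and by taking $\mathbb{P}[E_{\mathrm{st}}(0,0)]$ close enough to $1$ (which makes $\nu_{E_{\mathrm{st}}}$ close to $1$ by monotonicity in the density of particles), so \Cref{thrm:shell_tail} does apply.
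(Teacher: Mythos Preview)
Your proof is correct and actually establishes the stronger almost-sure statement, but it proceeds quite differently from the paper's argument. You bypass the machinery of Section~8 entirely by invoking \Cref{thrm:shell_tail} as a black box: once $F$ surrounds every cell of $\mathbb{L}$ at finite distance almost surely, the definition of percolation (``$\mathbb{L}\setminus F$ has only finite components'') is immediate via translation invariance and a countable union. The paper instead argues directly with the $DD$-path technology: it assumes the zero-height cluster of $F\cap\mathbb{L}$ at the origin is finite of diameter $k$, extracts a surrounding contour in $\mathbb{L}\setminus F$, finds two cells $u,v$ on this contour with $\|u-v\|_1\geq k$, and then uses Lemmas~\ref{lemma:sequence_of_hills}--\ref{lemma:hills-to-DDpath} and \Cref{prop:exp_tailDD} to bound the probability that such a long $DD$-path exists. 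This is essentially re-deriving the content of \Cref{thrm:shell_tail} in a slightly different geometric setting rather than citing it.

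Your route is cleaner and shows that the extra hypotheses ``$\ell$ sufficiently large and $\mathbb{P}[E_{\mathrm{st}}(0,0)]$ sufficiently large'' are not actually needed beyond the standing assumptions of \Cref{thrm:surface_event_simple}; the paper's contour argument uses them only to handle diameters $k<r_0$, where \Cref{prop:exp_tailDD} does not apply. What the paper's approach buys is a slightly different conclusion---that the $F\cap\mathbb{L}$ cluster through the origin is infinite with positive probability---which is not literally the same as the paper's own definition of ``percolates within $\mathbb{L}$''. One small caution: your final paragraph's claim that large $\mathbb{P}[E_{\mathrm{st}}(0,0)]$ forces $\nu_{E_{\mathrm{st}}}$ close to $1$ is not straightforward, since $\nu_{E_{\mathrm{st}}}$ is computed under a thinned intensity with conditioned displacements; it is safer to read the corollary as stated within the section's standing hypotheses (those of \Cref{thrm:surface_event_simple}), under which \Cref{thrm:shell_tail} applies without further justification.
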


\begin{proof}
	Assume without loss of generality that the origin is contained in \(F\cap\mathbb{L}\) and assume that the cluster of \(F\cap\mathbb{L}\) that contains the origin is finite. Let \(x,y\in F\cap\mathbb{L}\) be two cells of this cluster for which \(\|x-y\|_1\) is largest, and let \(k:=\lceil\|x-y\|_1\rceil\). Then, there exists a sequence cells 
	\(v_1,v_2,\dots,v_n\in\mathbb{L}\) for some \(n\geq 2k\) such that \(F_+(v_i)>0\) for all \(i\in\{1,2,\dots,n\}\), any two consecutive cells are adjacent, i.e. \(\|v_i-v_i'\|_{\infty}=1\), and such that \(\|v_n-v_1\|_{\infty}=1\). Furthermore, each such sequence contains at least 2 cells \(u,v\in \mathbb{L}\) for which \(\|u-v\|_1\geq k\). By using Lemmas \ref{lemma:sequence_of_hills} and \ref{lemma:hills-to-DDpath}, this gives that there exists a \(DD\)-path that begins in \(u\) and ends in \(v\). Let \(r_0\) be a sufficiently large value of \(r\) so that \Cref{thrm:shell_tail} holds. We then have for \(k\geq r_0\), by using that the probability space is space and time translation invariant that
	\begin{align*}
		&\mathbb{P}[\textrm{the cluster \(F\cap\mathbb{L}\) around the origin has diameter }k]\\
		&\leq \mathbb{P}[\textrm{a }DD\textrm{-path started at the origin leaves the ball of radius \(k\) centered at the origin}]\\
		&\leq \exp\left\{-C\lambda\frac{\ell k}{(\log\ell k)^c}\right\},
	\end{align*}
	where the second inequality follows from the same argument as in the proof of \Cref{thrm:shell_tail}.
	For \(k< r_0\), we can bound the probability by \(C r_0^2\mathbb{P}[E_{\textrm{st}}(0,0)^c]\) for some positive constant \(C\), since a closed cell implies that the Lipschitz function \(F_+\) is non-zero. Therefore, we get that the probability the zero-height cluster \(F\cap\mathbb{L}\) at the origin is not finite is greater than
	\[
		1-C r_0^2\mathbb{P}[E_{\textrm{st}}(0,0)^c]-\sum_{k\geq r_0}\exp\left\{-C\lambda\frac{\ell k}{(\log\ell k)^c}\right\},
	\]
	which is positive for sufficiently large \(\ell\), if \(\mathbb{P}[E_{\textrm{st}}(0,0)]\) is large enough.
\end{proof}

\appendix
\section{Appendix: Standard results}
\begin{lemma}[Chernoff bound for Poisson]\label{lem:chernoff}
Let \(P\) be a Poisson random variable with mean \(\lambda\). Then, for any \(0<\epsilon<1\),
\[
	\mathbb{P}[P<(1-\epsilon)\lambda] < \exp\{-\lambda\epsilon^2/2\}
\]
and
\[
	\mathbb{P}[P > (1 + \epsilon)\lambda] < \exp\{-\lambda\epsilon^2/4\}.
\]
\end{lemma}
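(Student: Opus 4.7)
The plan is to use the standard Chernoff method, i.e.\ an exponential Markov inequality applied to the Poisson moment generating function
\[
    \mathbb{E}[e^{tP}] = \exp\bigl(\lambda(e^t - 1)\bigr), \qquad t \in \mathbb{R}.
\]
For the upper tail, I would write, for $t>0$,
\[
    \mathbb{P}[P > (1+\epsilon)\lambda] \leq \frac{\mathbb{E}[e^{tP}]}{e^{t(1+\epsilon)\lambda}} = \exp\bigl(\lambda(e^t - 1) - t(1+\epsilon)\lambda\bigr),
\]
and optimise in $t$. Differentiating in $t$ and setting the derivative to zero gives $t = \log(1+\epsilon)$, which yields the bound $\exp\bigl(\lambda[\epsilon - (1+\epsilon)\log(1+\epsilon)]\bigr)$. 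For the lower tail, the same procedure with $t>0$ applied to $\mathbb{P}[e^{-tP} > e^{-t(1-\epsilon)\lambda}]$ gives the optimum at $t = -\log(1-\epsilon)$ and the bound $\exp\bigl(\lambda[-\epsilon - (1-\epsilon)\log(1-\epsilon)]\bigr)$.

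After the optimisation, it remains to verify the two elementary inequalities
\[
    (1+\epsilon)\log(1+\epsilon) - \epsilon \geq \tfrac{\epsilon^2}{4} \quad\text{and}\quad \epsilon + (1-\epsilon)\log(1-\epsilon) \geq \tfrac{\epsilon^2}{2}, \qquad 0<\epsilon<1.
\]
I would handle both by defining $f(\epsilon)$ as the left-hand side minus the right-hand side in each case and checking $f(0)=0$, $f'(0)=0$, and $f''(\epsilon)\geq 0$ on $[0,1)$. For the upper-tail inequality, $f''(\epsilon) = \frac{1}{1+\epsilon} - \frac{1}{2}$, which is nonnegative precisely when $\epsilon \leq 1$; this is the one place the hypothesis $\epsilon<1$ is used (and explains why the upper-tail constant is $1/4$ rather than $1/2$). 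For the lower tail, $f''(\epsilon) = \frac{1}{1-\epsilon} - 1 = \frac{\epsilon}{1-\epsilon} \geq 0$ on $[0,1)$ automatically.

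There is no real obstacle here; the only mildly delicate point is the elementary inequality in the upper tail, where the restriction $\epsilon<1$ is essential for the constant $1/4$ and cannot be weakened without changing the constant. With the two convexity checks in hand, combining them with the optimised Chernoff bounds yields the stated inequalities directly.
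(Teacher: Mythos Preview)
Your proposal is correct and follows the standard exponential-Markov (Chernoff) method; the optimisation and the two convexity checks are carried out properly. The paper itself states this lemma in an appendix of ``standard results'' without proof, so there is no comparison to be made---your argument is exactly the textbook one that the authors implicitly appeal to.
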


\begin{lemma}\label{lemma:binomial}
	Let \(x,y\in\mathbb{Z}_+\). Then, for any \(c_1,c_2>1\), we have
	\[
		{x+y\choose x}e^{-(c_1x+c_2y)}\leq e^{-(c_1-1)x-(c_2-1)y}.
	\]
\end{lemma}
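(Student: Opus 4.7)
The plan is to bound the binomial coefficient by a crude exponential and then absorb the resulting logarithmic factor into the two exponentials on the right-hand side, using the fact that $\log 2 < 1$.

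First I would use the standard bound $\binom{x+y}{x}\leq 2^{x+y}$, which follows immediately from the binomial identity $2^{x+y} = \sum_{k=0}^{x+y}\binom{x+y}{k}$ together with the non-negativity of each summand. Inserting this into the left-hand side of the claimed inequality gives
\[
\binom{x+y}{x}e^{-(c_1 x + c_2 y)} \;\leq\; 2^{x+y}\,e^{-(c_1 x + c_2 y)} \;=\; e^{-(c_1-\log 2)x-(c_2-\log 2)y}.
\]

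Next I would compare the exponents. Since $\log 2 < 1$, for every $c_i$ we have $c_i - \log 2 > c_i - 1$, and therefore $-(c_i-\log 2) < -(c_i-1)$ for $i=1,2$. Applied to the display above this yields
\[
e^{-(c_1-\log 2)x-(c_2-\log 2)y} \;\leq\; e^{-(c_1-1)x-(c_2-1)y},
\]
which is exactly the required bound. The assumption $c_1,c_2>1$ is not strictly needed for the inequality itself (it holds for all real $c_1,c_2$), but it is what makes both exponents on the right-hand side negative and thus makes the bound genuinely decaying in $x$ and $y$, which is the form in which the lemma is invoked in the proof of Proposition \ref{prop:exp_tail}.

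There is no real obstacle here; the only ``choice'' is the particular crude bound for the binomial coefficient. One could replace $2^{x+y}$ by a tighter estimate such as $\binom{x+y}{x}\leq \bigl(\tfrac{e(x+y)}{x}\bigr)^{x}$, but this complicates the algebra without improving the final inequality, since the slack $1-\log 2 > 0$ is already enough to absorb the factor $2^{x+y}$.
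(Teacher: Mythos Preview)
Your proof is correct and in fact cleaner than the paper's. The paper assumes without loss of generality that \(x\geq y\), applies the bound \(\binom{x+y}{x}\leq\bigl(\tfrac{e(x+y)}{x}\bigr)^{x}\), and then uses \((1+y/x)^{x}\leq e^{y}\) to arrive at the same conclusion. Your approach via \(\binom{x+y}{x}\leq 2^{x+y}\) together with the numerical fact \(\log 2<1\) avoids the symmetry reduction and the two-step estimate, at the cost of nothing: the final inequality is identical. It is amusing that the alternative you mention and dismiss as ``complicating the algebra without improving the final inequality'' is exactly the route the paper takes. Your observation that the hypothesis \(c_1,c_2>1\) is not used in the inequality itself, only in its application, is also correct.
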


\begin{proof}
	Since \({x+y\choose x} = {x+y\choose y}\), we can assume that \(x\geq y\). Then we use the inequality \({x+y\choose x} \leq \left(\frac{(x+y)e}{x}\right)^x\) to obtain	
	\[
		{x+y\choose x}e^{-c_1x-c_2y}\leq \left(1+\frac{y}{x}\right)^xe^{-(c_1-1)x-c_2y}\leq e^{-(c_1-1)x-(c_2-1)y}.
	\]
\end{proof}

\bibliographystyle{plain}\addcontentsline{toc}{section}{References}
\bibliography{library.bib}

\begin{thebibliography}{10}

\bibitem{Barlow2004}
Martin~T. Barlow.
\newblock {Random walks on supercritical percolation clusters}.
\newblock {\em Annals of Probability}, 32(4):3024--3084, 2004.

\bibitem{Hambly2009}
Martin~T. Barlow and Ben~M. Hambly.
\newblock {Parabolic Harnack inequality and local limit theorem for percolation
  clusters}.
\newblock {\em Electronic Journal of Probability}, 14:1--26, 2009.

\bibitem{Candellero2015}
Elisabetta Candellero and Augusto Teixeira.
\newblock {Percolation and isoperimetry on transitive graphs}.
\newblock {\em arXiv}, 2015.

\bibitem{Dirr2010}
Nicolas Dirr, Patrick~W. Dondl, Geoffrey~R. Grimmett, Alexander~E. Holroyd, and
  Michael~V. Scheutzow.
\newblock {Lipschitz percolation}.
\newblock {\em Electronic Communications in Probability}, 15:1--10, 2010.

\bibitem{Gracar2016}
Peter Gracar and Alexandre Stauffer.
\newblock {Random walks in random conductances: decoupling and spread of
  infection}.
\newblock {\em arXiv}, 2017.

\bibitem{Grimmett2012}
Geoffrey~R. Grimmett and Alexander~E. Holroyd.
\newblock {Geometry of Lipschitz percolation}.
\newblock {\em Annales de l'institut Henri Poincare (B) Probability and
  Statistics}, 48(2):309--326, 2012.

\bibitem{Kesten2005}
Harry Kesten and Vladas Sidoravicius.
\newblock {The spread of a rumor or infection in a moving population}.
\newblock {\em Annals of Probability}, 33(6):2402--2462, 2005.

\bibitem{Kesten2006}
Harry Kesten and Vladas Sidoravicius.
\newblock {A phase transition in a model for the spread of an infection}.
\newblock {\em Illinois Journal of Mathematics}, 50(3):547--634, 2006.

\bibitem{Peres2012}
Yuval Peres, Alistair Sinclair, Perla Sousi, and Alexandre Stauffer.
\newblock {Mobile geometric graphs: Detection, coverage and percolation}.
\newblock {\em Probability Theory and Related Fields}, 156(1-2):273--305, 2013.

\bibitem{Sidoravicius2016}
Vladas Sidoravicius and Alexandre Stauffer.
\newblock {Multi-Particle Diffusion Limited Aggregation}.
\newblock {\em arXiv}, 2016.

\bibitem{Sidoravicius2009}
Vladas Sidoravicius and Alain-Sol Sznitman.
\newblock {Percolation for the vacant set of random interlacements}.
\newblock {\em Communications on Pure and Applied Mathematics}, 62(6):831--858,
  2009.

\bibitem{Sidoravicius2014}
Vladas Sidoravicius and Augusto Teixeira.
\newblock {Absorbing-state transition for Stochastic Sandpiles and Activated
  Random Walks}.
\newblock {\em arXiv}, 2014.

\bibitem{Stauffer2014}
Alexandre Stauffer.
\newblock {Space-time percolation and detection by mobile nodes}.
\newblock {\em Annals of Applied Probability}, 25(5):2416--2461, 2015.

\bibitem{Sznitman2012}
Alain-Sol Sznitman.
\newblock {Decoupling inequalities and interlacement percolation on
  $G\times\mathbb{Z}$}.
\newblock {\em Inventiones mathematicae}, 187(3):645--706, 2012.

\bibitem{Teixeira2016}
Augusto Teixeira.
\newblock {Percolation and local isoperimetric inequalities}.
\newblock {\em Probability Theory and Related Fields}, 165(3-4):963--984, 2016.

\end{thebibliography}
\end{document}